 \DeclareRobustCommand{\legendsquare}[1]{
  \textcolor{#1}{\rule{1ex}{1ex}}
}
 \definecolor{darkgreen}{rgb}{0, 0.5, 0}
\newtheorem{theorem}{Theorem}
\newtheorem{lemma}{Lemma}
\newtheorem{prop}{Proposition}
\newtheorem{cor}{Corollary}
\theoremstyle{definition}
\def\vbeta{\bm{\beta}}
\def\vd{\bm{d}}
\def\mR{\mathbb{R}}
\DeclareMathOperator{\diag}{diag}
\DeclareMathOperator{\Var}{Var}
\DeclareMathOperator{\Cov}{Cov}
\DeclareMathOperator{\argmin}{argmin}
\DeclareMathOperator{\supp}{supp}
\title{Analysis of Networks via the Sparse $\beta$-Model\footnote{First arXiv version: August 8, 2019. This version: \today. We thank Prof. David Dunson, Prof. Aurore Delaigle, an Associate Editor and two referees for their constructive comments that have led to a much improved paper. Chen is partially supported by a Turing-HSBC-ONS Economic Data Science Award. Kato is partially supported by NSF grants DMS-1952306 and DMS-2014636. Leng's research is partially supported by a Turing Fellowship.}}
\author{Mingli Chen
\footnote{Department of Economics, University of Warwick, Coventry, CV4 7AL, UK. Email: m.chen.3@warwick.ac.uk
} \and
Kengo Kato
\footnote{Department of Statistics and Data Science, Cornell University, 
1194 Comstock Hall, Ithaca, NY 14853. Email: kk976@cornell.edu
} \and
Chenlei Leng
\footnote{Department of Statistics, University of Warwick, Coventry, CV4 7AL, UK. Email: C.Leng@warwick.ac.uk
}}
\date{}
\begin{document}
\maketitle
  
\begin{abstract}
Data in the form of networks are increasingly available in a variety of areas, yet statistical models allowing for parameter estimates with desirable statistical properties for sparse networks remain scarce. To address this, we propose the Sparse $\beta$-Model (S$\beta$M), a new network model that interpolates the celebrated Erd\H{o}s-R\'enyi model and the $\beta$-model that assigns one different parameter to each node. By a novel reparameterization of the $\beta$-model to distinguish global and local parameters, our S$\beta$M can drastically reduce the dimensionality of the $\beta$-model by requiring some of the local parameters to be zero. We derive the asymptotic distribution of the maximum likelihood estimator of the S$\beta$M when the support of the parameter vector is known. When the support is unknown, we formulate a penalized likelihood approach with the $\ell_0$-penalty. Remarkably, we show via a monotonicity lemma that the seemingly combinatorial computational problem due to the $\ell_0$-penalty can be overcome by assigning nonzero parameters to those nodes with the largest degrees. We further show that a $\beta$-min condition guarantees our method to identify the true model and provide excess risk bounds for the estimated parameters. The estimation procedure enjoys good finite sample properties as shown by simulation studies. The usefulness of the S$\beta$M is further illustrated via the analysis of a microfinance take-up example.
\end{abstract}
\noindent

\noindent {\it Key Words}: $\beta$-min condition; $\beta$-model; $\ell_0$-penalized likelihood; Erd\H{o}s-R\'enyi model; Exponential random graph models; Sparse networks

\section{Introduction}
Complex datasets involving multiple units that interact with each other are best represented by networks where nodes correspond to units and edges to interactions. Thanks to the rapid development of measurement and information technology, data in the form of networks are becoming increasingly available in a wide variety of areas including science, health, economics, engineering, and sociology \citep{jackson2010social,Barabasi:2016,depaula:2017,Newman:2018}. 
{Observed networks tend to be sparse}, namely having much fewer edges than the maximum possible numbers of links allowed, and exhibits various degrees of heterogeneity.
One of the major goals of analysis of networks is to understand the generative mechanism of the interconnections among the nodes in  such networks using statistical models. We refer to \cite{Goldenberg:etal:2009} and \cite{Fienberg:2012}  for reviews,  \cite{Kolaczyk:2009} for a comprehensive treatment, and  \cite{Kolaczyk:2017} for foundational issues and emerging challenges. More recent developments on statistical modeling of networks can be found in \cite{li2020network}, \cite{schweinberger2017exponential},  \cite{stewart2020scalable} and references therein.
The study of various statistical properties of a network model is usually conducted by allowing the number of nodes $n$ to go to infinity. 

The earliest, simplest and perhaps the most studied network model is the Erd\H{o}s-R\'enyi model  \citep{erdds1959random,erdos1960evolution,gilbert1959random} where connections between pairs of nodes independently occur  with the same probability $p$. The resulting distribution of the degree of any node is Poisson for large $n$ if $np$ equals a constant. Probabilistically, the simplicity of the Erd\H{o}s-R\'enyi model  has permitted the development of many insights on networks as a mathematical object such as the existence of giant components and phase transition. The Erd\H{o}s-R\'enyi model is also attractive from a theoretical perspective as discussed in  Section \ref{sec:SbetaM}. In particular, the maximum likelihood estimator (MLE) of its parameter is consistent and asymptotically normal for both dense and sparse networks. By a sparse network, we mean that its number of edges scales sub-quadratically with the number of nodes. Similar phenomena are discussed for a closely related model  for directed networks in  \cite{Krivitsky:Kolaczyk:2015} that study the fundamental issue of the effective sample size of a network model. Despite its theoretical attractiveness, however, the Erd\H{o}s-R\'enyi model is not suitable for modeling real networks whose empirical degree distributions are often heavy-tailed  because it tends to produce degree distributions similar to Poisson \citep{clauset2009power, Newman:2018}. 
We refer further to \cite{Caron:Fox:2017} for related discussion and a novel attempt in using exchangeable random measures to model sparse networks. 

In practice, many real network exhibits a certain level of degree heterogeneity, usually having few high degree ``core" nodes with many edges and many low degree individuals with few links \citep{clauset2009power,Newman:2018}. 
Many statistical models have been developed to directly account for degree heterogeneity. Two prominent examples are the stochastic block model  and the $\beta$-model. The former aims to capture degree heterogeneity by clustering nodes into communities with similar connection patterns \citep{Holland:etal:1983,Wang:Wong:1987,Bickel:Chen:2009,Abbe:2018}, {sometimes after adjusting the propensity of each node in participating in network activities \citep{karrer2011stochastic}}. The latter explicitly models degree heterogeneity by using node-specific parameters  \citep{Britton:etal:2006,Chatterjee:etal:2011}. The $\beta$-model can be seen as a generalization of the Erd\H{o}s-R\'enyi model where the probability that two nodes are connected  depends on the corresponding two node parameters.  
It is one of the simplest exponential random graph models \citep{Robins:etal:2007} and a special case of the $p_1$ model  \citep{Holland:Leinhardt:1981}.  
 The recent work of \cite{Mukherjee:etal:2019} studies  sharp thresholds for detecting sparse signals in the $\beta$-model from a hypothesis testing perspective. 

Statistically, however, the $\beta$-model has a limitation when sparse networks are considered. 
Namely, until now,  the MLE of the $\beta$-model parameters is known to be consistent and asymptotically normal only for relatively dense networks \citep{Chatterjee:etal:2011,Yan:Xu:2013}. We refer also to \cite{Rinaldo:etal:2013} and  \cite{Karwa:Slavkovic:2016} for further results concerning the MLE for the $\beta$-model, and \cite{Yan:etal:2016} for similar results on  the MLE of the parameters in the $p_1$ model. The gap between the need for modeling sparse networks that are commonly seen in practice and the theoretical guarantees of the $\beta$-model  that are available for much denser networks thus necessitates the development of new models.

In this paper, we propose a new network model which we call the \textit{Sparse $\beta$-Model} (abbreviated as S$\beta$M) that can capture node heterogeneity and at the same time allows parameter estimates with desirable statistical properties under sparse network regimes, thereby complementing the Erd\H{o}s-R\'enyi and $\beta$-models. Specifically, the S$\beta$M is defined by a novel reparameterization of the $\beta$-model to distinguish parameters  characterizing global sparsity and local {density} of the network. Using a cardinality constraint on the local parameters, the S$\beta$M can effectively interpolate the  Erd\H{o}s-R\'enyi  and $\beta$-models with a continuum of intermediate models while reducing the dimensionality of the latter. Before proceeding further, we emphasize that the word ``sparse'' in S$\beta$M refers to the sparsity of the parameters as often used in high-dimensional statistics, in the sense that many parameters in the S$\beta$M are assumed irrelevant. {We remark that this notion of parameter sparsity should not be confused with network sparsity and it will become clear which sparsity we refer to from the context. Intuitively, a reduction in the number of parameters will enable us to model networks that are sparse. For example, in the extreme case where only one parameter is present, we will show in Section 2 that a notion of statistical inference is possible, as long as the expected total number of nodes goes to infinity.}

We study several statistical properties of the S$\beta$M in the asymptotic setting where the number of nodes tends to infinity. 
We first study parameter estimation in the S$\beta$M. We derive the asymptotic distribution of the maximum likelihood estimator  when the support of the parameter vector is known.
Although this result should be considered as a theoretical benchmark, it leads to the following important properties of the S$\beta$M: 1) the MLE of the parameters in the S$\beta$M can achieve consistency and asymptotic normality under sparse network regimes, and 2) the S$\beta$M can also capture the heterogeneous patterns for the individual nodes. Next, we consider a more practically relevant case where the support is unknown and formulate a penalized likelihood approach with the $\ell_0$-penalty. Remarkably, we show via a monotonicity lemma that the seemingly combinatorial computational problem due to the $\ell_0$-penalty can be overcome by assigning nonzero parameters to those nodes with the largest degrees. We show further that a $\beta$-min condition guarantees our method to identify the true model with high probability and derive excess risk bounds for the estimated parameters. In particular, we show that the $\ell_{0}$-penalized MLE is \textit{persistent} in the sense of \cite{GreenshteinRitov2004} for (dense and) sparse networks under mild regularity conditions. The simulation study confirms that the $\ell_{0}$-penalized MLE with its sparsity level selected by Bayesian Information Criterion (BIC) works well in the finite sample, both in terms of model selection and parameter estimation. 

Our development of the S$\beta$M is practically motivated by the microfinance take-up dataset of 43 rural Indian villages in \cite{Banerjee:etal:2013}. A detailed description of this dataset can be found in Section \ref{Sec:DA}. In Figure \ref{fig:1}, for illustration,  we plotted a sub-network of the dataset corresponding to one of the villages (Village 60) with  $356$ nodes  as well as their empirical degree distribution (the number $356$ is the sample size of Village 60; the total sample size of all 43 villages combined is 9598). The average degree is $7.98$, the maximum degree is $39$, and there are 15 nodes with no connections at all.  From the  left plot, we can see that there are few nodes with many edges and many peripheral nodes with few connections.  The right plot presents the empirical degree distribution on the log-log scale. It is seen that the empirical distribution of the node degrees is heavy tailed.

\begin{figure}[hbt!]
\centering
\begin{subfigure}[c]{.4\textwidth}
\includegraphics[width=\textwidth]{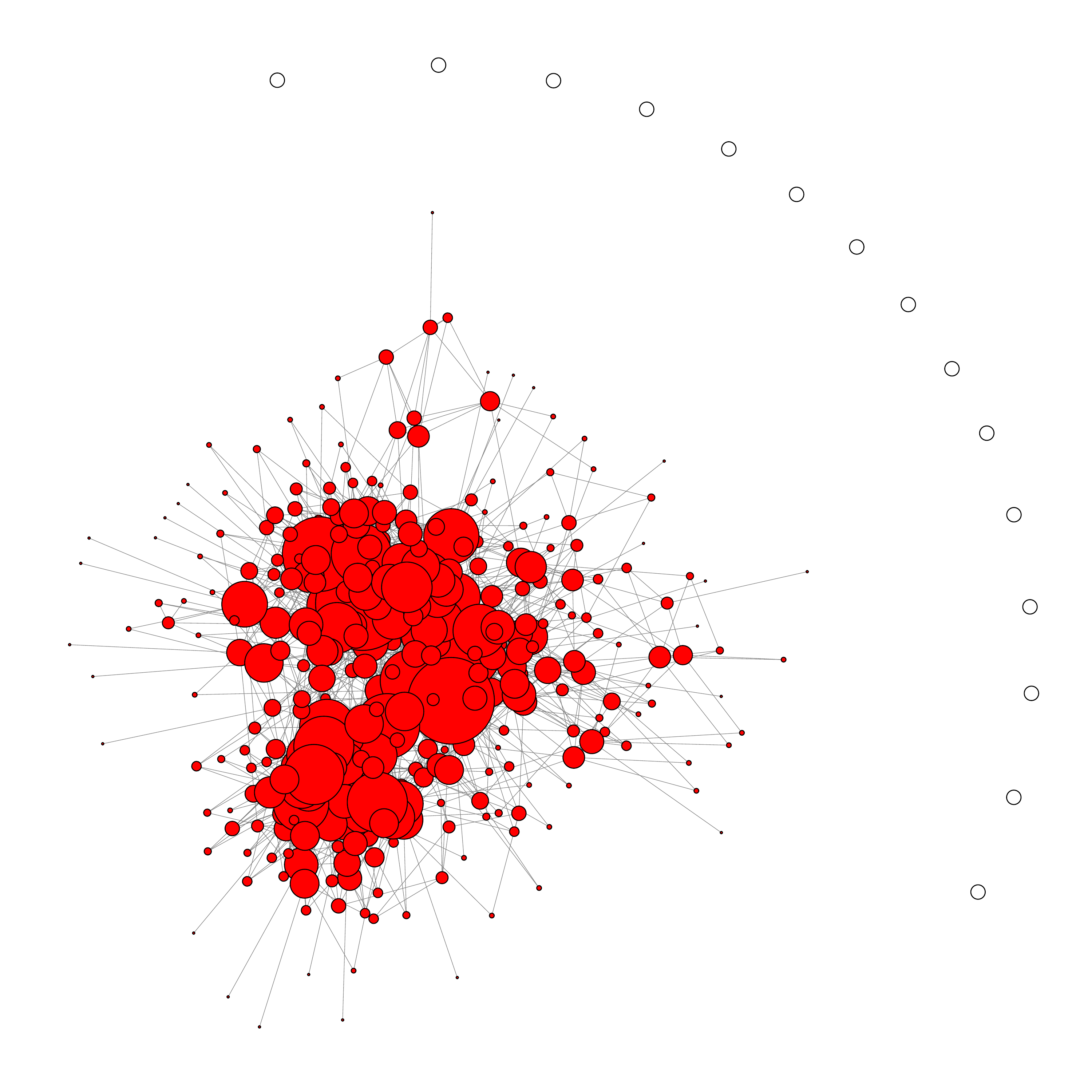} 
\end{subfigure}
\begin{subfigure}[c]{.5\textwidth}
\includegraphics[width=\textwidth]{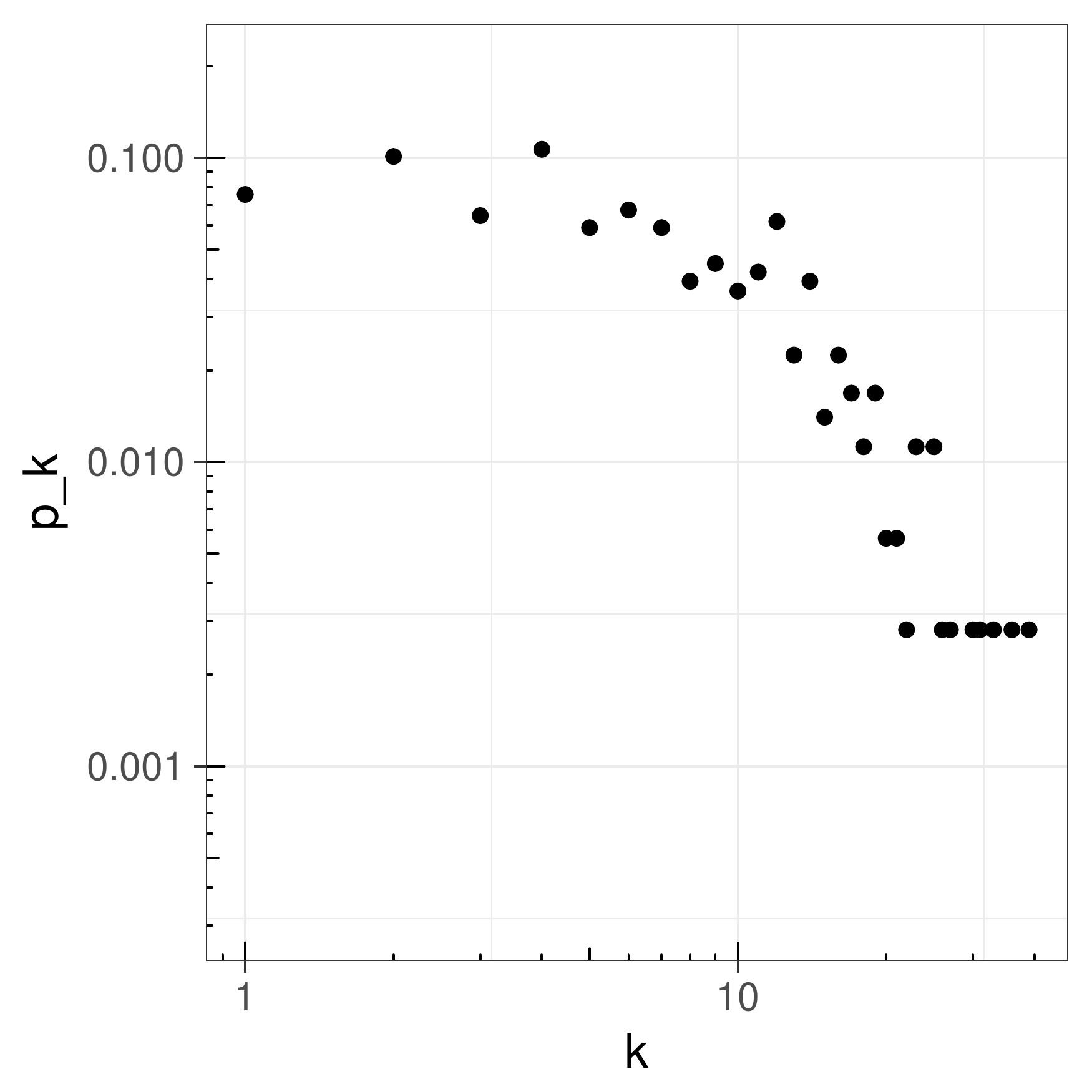}
\end{subfigure}
\caption{\label{fig:1} Left: The network of Village 60. The size of each node is proportional to its degree. Right: The empirical distribution of the node degrees (frequency of degree denoted as $p_k$ versus degree $k$) on the log-log scale.}
\end{figure}

The network structure in Figure \ref{fig:1} depicts features in so-called core-periphery or leaders-followers networks commonly seen in financial economics, due to the presence of one group of core nodes and another group of peripheral nodes. For example, over-the-counter markets for financial assets are dominated by a relatively small number of core intermediaries and a large number of peripheral customers. The core nodes are densely connected with each other and to the peripheral nodes, while  the peripheral nodes are typically only connected to the core nodes but not to each other. This structure has important policy implications. For example, small shocks to those core/hub/leading players will affect the entire network \citep{Acemoglu:etal:2012} because of their roles in facilitating diffusion \citep{Banerjee:etal:2013}. It is thus natural to associate those important core nodes with their individual parameters while leaving the less important peripheral nodes as background nodes without associated parameters.    The S$\beta$M is a model for doing this.

The rest of the paper is organized as follows. 
In Section \ref{sec:SbetaM}, we define the S$\beta$M, establish its connection to the Erd\H{o}s-R\'enyi and $\beta$-models, and discuss its properties. We also derive some auxiliary asymptotic results for the Erd\H{o}s-R\'enyi model.
In Section \ref{sec:estimation}, we consider estimation of the parameters in the S$\beta$M. 
We first consider the ideal situation that the support of the parameter vector is known and  derive consistency and asymptotic normality results for the MLE. 
Next, we consider a more practically relevant situation where the support is unknown and formulate a penalized likelihood approach with the $\ell_0$-penalty  building on a monotonicity lemma, and derive some statistical properties of the estimator. In Section \ref{sec: simulation}, we provide extensive simulation results. In Section \ref{Sec:DA}, we analyze the microfinance take-up example. A summary and discussion on future research are given in Section \ref{sec:conclusion}. All the proofs are relegated to the Appendix. The Appendix is contained in the supplementary material. 

\subsection{Notation}  
Let $\mR_{+} = [0,\infty)$ denote the nonnegative real line. 
For a finite set $F$, let $|F|$ denote its cardinality. 
For a vector $\vbeta \in \mR^{n}$, let $S(\vbeta) = \{ i \in \{ 1,\dots,n \} : \beta_{i} \ne 0 \}$ denote the support of $\vbeta$, and let $\| \vbeta \|_{0}$ denote the number of nonzero elements of $\vbeta$, i.e., $\| \vbeta \|_{0} = |S(\vbeta)|$.  We use $\vbeta_S$ to denote the subvector of $\vbeta$ with indices in $S$ and $S^c$ as the complement of $S$. 
For two sequence of positive numbers $a_{n}$ and $b_{n}$, we write $a_{n} \sim b_{n}$   if $-\infty < \liminf_{n \to \infty} a_{n}/b_{n} \le \limsup_{n \to \infty} a_{n}/b_{n} < \infty$. 

A network with $n$ nodes is represented by a graph $G_{n} = G_n(V, E)$ where $V$ is the set of  nodes or vertices and $E$ is the set of edges or links.   Let $A=(A_{ij})_{i,j=1}^n$ be the adjacency matrix where $A_{ij} \in \{ 0, 1 \}$ is an indicator whether nodes $i$ and $j$ are connected:
\[
A_{ij} =
\begin{cases}
1 & \text{if nodes $i$ and $j$ are connected} \\
0 & \text{if nodes $i$ and $j$ are not connected}
\end{cases}
.
\] 
We focus on undirected graphs with no self loops, so that the adjacency matrix $A$ is symmetric with zero diagonal entries. 
The degree of node $i$ is defined by $d_{i} = \sum_{j=1}^{n} A_{ij} = \sum_{j \ne i} A_{ij}$, and the vector $\bm{d}=(d_1, \dots, d_n)^T$ is called the degree sequence of $G_n$. The total number of edges is denoted by $d_+=\sum_{i=1}^n d_i/2 = \sum_{1 \le i < j \le n} A_{ij}$. 
Modeling a random network or graph is carried out by modeling the entries of $A$ as random variables \citep{Bollobas:etal:2007}.  Denote by $D_{+} = E[d_{+}]$ the expected total number of edges, which is a function of $n$, typically a polynomial. We say that a (random) network  is \textit{dense} if $D_+ \sim n^{2}$   and that it is \textit{sparse} if $D_+ \sim n^{\kappa}$  for some $\kappa\in (0, 2)$ \citep{Bollobas:Riordan:2011}.  Apparently, the smaller $\kappa$ is, the sparser the network is.

\section{Sparse $\beta$-Model}\label{sec:SbetaM}

We first review the Erd\H{o}s-R\'enyi model and the $\beta$-model as a motivation to our S$\beta$M. 
The Erd\H{o}s-R\'enyi model assumes that $A_{ij}$'s are generated as independent  Bernoulli random variables with 
\[
P(A_{ij}=1)=p=\frac{e^\mu}{1+e^\mu},
\]
where $p$ and $\mu$ are parameters possibly dependent on $n$.  Given the graph $G_n$, the MLE of $p$ is 
\[
\hat{p}=\frac{1}{\binom{n}{2}}\sum_{1 \le i<j \le n}A_{ij}=\frac{2d_+}{n(n-1)},
\]
which is also known as the density of the network. The next proposition shows that the MLE $\hat{p}$ retains asymptotic normality even for sparse networks. 
That is, we assume that $p = p_{n}$ may tend to zero as $n \to \infty$ to accommodate  sparse network regimes \citep[see also][for related results]{Krivitsky:Kolaczyk:2015}.
 
\begin{prop}\label{prop:erm}
Consider  the Erd\H{o}s-R\'enyi model. Assume that $n^\gamma p \to p^{\dagger}$ as $n \to \infty$ where $p^{\dagger}>0$ is a fixed constant and $\gamma\in [0,2)$. Then $n^{1+\gamma/2}  (\hat{p}-p) \stackrel{d}{\to} N(0, \sigma^2_{p^{\dagger}})$ {as $n \to \infty$}, 
where  $\sigma^2_{p^{\dagger}} = 2p^{\dagger}(1-p^{\dagger})$ for $\gamma=0$ and $\sigma^2_{p^{\dagger}}= 2p^{\dagger}$ for $\gamma \in (0,2)$. If instead we assume $n^\gamma p = p^{\dagger}$, then  the MLE of $p^{\dagger}$, denoted as $\hat{p}^{\dagger} = n^{\gamma} \hat{p}$, satisfies that $n^{1-\gamma/2}  (\hat{p}^{\dagger}-p^\dagger) \stackrel{d}{\to} N(0, \sigma^2_{p^{\dagger}})$ {as $n \to \infty$}. 
 \end{prop}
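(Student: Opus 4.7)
The plan is to reduce everything to a triangular-array central limit theorem for a single Binomial sum. Because $A_{ij}$ for $1\le i<j\le n$ are i.i.d. Bernoulli($p$), we have $d_{+}\sim\mathrm{Bin}(N,p)$ with $N=\binom{n}{2}$, and therefore
\[
\hat p - p \;=\; \frac{1}{N}\sum_{1\le i<j\le n}(A_{ij}-p),\qquad \Var(\hat p)=\frac{p(1-p)}{N}.
\]
First I would compute the asymptotic variance: since $N\sim n^{2}/2$,
\[
\Var\!\bigl(n^{1+\gamma/2}(\hat p - p)\bigr)
= \frac{n^{2+\gamma}}{N}\,p(1-p)
\sim 2\,n^{\gamma}p\,(1-p).
\]
Under the assumption $n^{\gamma}p\to p^{\dagger}$, this limit equals $2p^{\dagger}(1-p^{\dagger})$ when $\gamma=0$ (where $p\to p^{\dagger}$) and $2p^{\dagger}$ when $\gamma\in(0,2)$ (where $p\to 0$), matching the stated $\sigma^{2}_{p^{\dagger}}$ in each regime.

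Next I would establish asymptotic normality via the Lyapunov CLT applied to the triangular array $\{A_{ij}-p_{n}\}_{1\le i<j\le n}$. A direct computation gives $E|A_{ij}-p|^{3}\le p(1-p)$, so Lyapunov's ratio is bounded by
\[
\frac{N\,p(1-p)}{\bigl(N\,p(1-p)\bigr)^{3/2}}
=\frac{1}{\sqrt{N\,p(1-p)}}\longrightarrow 0,
\]
because $Np\sim \tfrac{1}{2}n^{2-\gamma}p^{\dagger}\to\infty$ whenever $\gamma<2$ (and $1-p$ is bounded away from $0$). Hence
\[
\frac{d_{+}-Np}{\sqrt{Np(1-p)}}\stackrel{d}{\to} N(0,1),
\]
which, rescaling by $n^{1+\gamma/2}/N$ and invoking Slutsky with the variance computation above, yields $n^{1+\gamma/2}(\hat p - p)\stackrel{d}{\to}N(0,\sigma^{2}_{p^{\dagger}})$.

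Finally, the second statement is almost a one-line corollary: if $n^{\gamma}p=p^{\dagger}$ exactly, then $\hat p^{\dagger}=n^{\gamma}\hat p$ is an affine transformation of $\hat p$, so
\[
n^{1-\gamma/2}(\hat p^{\dagger}-p^{\dagger})
= n^{1-\gamma/2}\cdot n^{\gamma}(\hat p - p)
= n^{1+\gamma/2}(\hat p - p),
\]
and the limit distribution is inherited from the first part.

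I expect no serious obstacle; the only subtlety is the triangular-array nature of the CLT when $p=p_{n}\to 0$ (the $\gamma>0$ case), which requires checking a Lyapunov-type condition rather than citing the classical i.i.d. CLT. The key quantitative fact to highlight is that sparsity is allowed precisely up to $\gamma<2$, because this is exactly the regime in which the expected number of edges $Np$ diverges, which is what makes the normal approximation to the Binomial valid.
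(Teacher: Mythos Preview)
Your proposal is correct and follows essentially the same route as the paper: both reduce to a triangular-array CLT for $\sum_{i<j}(A_{ij}-p)$, the only cosmetic difference being that the paper verifies the Lindeberg condition (immediate from $|A_{ij}-p|\le 1$ and $s_n\to\infty$) whereas you verify Lyapunov's condition via the third-moment bound. The variance computation, the identification of $n^2 p(1-p)\to\infty$ as the key requirement, and the handling of the second statement as an affine rescaling all match the paper's argument.
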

 
The expected number of edges for the Erd\H{o}s-R\'enyi model satisfies $D_+ \sim n^{2-\gamma}$ if $n^\gamma p \to p^{\dagger}$ {as $n \to \infty$}.  The proposition shows that as long as $D_+ \to \infty$, which also allows for sparse networks, the MLE of $p$ is asymptotically normal.
If we assume further $n^\gamma p = p^{\dagger}$, then  $p^{\dagger}$ as a non-degenerate constant can be consistently estimated with its MLE being asymptotically normal. In particular, 
for dense networks where $\gamma = 0$,  $\hat{p}^{\dagger}$  is $n$-consistent; in this case, the effective sample size is of order $N=O(n^2)$, so $\hat{p}^{\dagger}$ is $\sqrt{N}$-consistent. For sparse networks where $\gamma=1$,  $n\hat{p}^{\dagger}$  is $\sqrt{n}$-consistent. For a more general $\gamma$, the rate of convergence of $\hat{p}^{\dagger}$ is $n^{1-\gamma/2}$ and the asymptotic variance of $\hat{p}^{\dagger}$ is proportional to $n^{-2+\gamma}$.  Thus $n^{2-\gamma}$  can be seen as the effective sample size for the size invariant parameter $p^{\dagger}$. The notion and importance  of the effective sample size of a network model have been discussed and highlighted by \cite{Krivitsky:Kolaczyk:2015} that study a closely related model for directed networks in the special case when $\gamma=0$ or $1$.  We can also work with the parameter $\mu$ of the Erd\H{o}s-R\'enyi model on the logit scale as follows.

\begin{cor}\label{cor:erm}
Assume that $n^\gamma p \to p^{\dagger}$ as $n \to \infty$ where $p^{\dagger}>0$ is a fixed constant and $\gamma\in [0,2)$.
Define $\mu^{\dagger}=\log [p^{\dagger}/(1-p^{\dagger})]$ for $\gamma=0$ and $\mu^{\dagger}=\log p^{\dagger}$ for $\gamma\in (0,2)$. The MLE of $\mu = \log [p/(1-p)]$ over the parameter space $\mR$ is $\hat\mu=\log[ \hat{p}/(1-\hat{p})]$ and we have
$n^{1-\gamma/2}\left (\hat\mu-\mu \right)  \stackrel{d}{\to} N(0, \sigma^2_{\mu^{\dagger}})$  {as $n \to \infty$}, 
where $\sigma^2_{\mu^{\dagger}}=4+2e^{-\mu^{\dagger}}+2e^{\mu^{\dagger}}$ if $\gamma=0$ and $\sigma_{\mu^{\dagger}}^{2}=2e^{-\mu^{\dagger}}$ if $\gamma \in (0, 2)$.
In addition, we can expand $\mu$ as $\mu = -\gamma \log n + \mu^{\dagger} + o(1)$.
\end{cor}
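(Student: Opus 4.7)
The plan is to derive the corollary from Proposition~\ref{prop:erm} by (i) identifying the MLE via the invariance property and (ii) applying the delta method to the map $g(p) = \log\!\bigl[p/(1-p)\bigr]$, with care taken in the $\gamma>0$ regime where $g'(p)$ blows up at $p=0$.

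First, since $p \mapsto \log[p/(1-p)]$ is strictly increasing from $(0,1)$ onto $\mR$, the functional invariance of the MLE immediately yields $\hat\mu = \log[\hat p/(1-\hat p)]$ whenever $\hat p \in (0,1)$; under $n^{\gamma}p \to p^{\dagger}>0$ we have $\hat p \in (0,1)$ with probability tending to one (since $\hat p \to_P 0$ if $\gamma>0$ but $\hat p\ne 0$ with high probability because $E[d_{+}]=\binom{n}{2}p\to\infty$), so $\hat\mu$ is well-defined asymptotically. The expansion $\mu = -\gamma\log n + \mu^{\dagger} + o(1)$ is then an elementary computation: write $\mu = \log p - \log(1-p)$, substitute $p = n^{-\gamma}p^{\dagger}(1+o(1))$, and use $\log(1-p)=o(1)$.

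For the asymptotic normality, consider the two regimes separately. When $\gamma=0$, $p\to p^{\dagger}\in(0,1)$, so $g$ is continuously differentiable at $p^{\dagger}$ with $g'(p^{\dagger}) = 1/[p^{\dagger}(1-p^{\dagger})]$. Standard delta method applied to Proposition~\ref{prop:erm} gives
\[
n(\hat\mu-\mu) \stackrel{d}{\to} N\!\left(0,\; \frac{2p^{\dagger}(1-p^{\dagger})}{(p^{\dagger})^{2}(1-p^{\dagger})^{2}}\right)
= N\!\left(0,\; \tfrac{2}{p^{\dagger}(1-p^{\dagger})}\right),
\]
and substituting $1/p^{\dagger} = 1+e^{-\mu^{\dagger}}$ and $1/(1-p^{\dagger}) = 1+e^{\mu^{\dagger}}$ yields $\sigma^{2}_{\mu^{\dagger}} = 4 + 2e^{-\mu^{\dagger}} + 2e^{\mu^{\dagger}}$. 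When $\gamma\in(0,2)$, the point of expansion drifts: Taylor-expand $g$ around $p$ to obtain
\[
\hat\mu - \mu = \frac{\hat p - p}{p(1-p)} + R_{n},
\]
with remainder $R_{n}$ of order $(\hat p - p)^{2}/p^{2}$ on the event $\hat p/p\in[1/2,3/2]$. By Proposition~\ref{prop:erm}, $\hat p - p = O_{P}(n^{-1-\gamma/2})$ and $p \sim n^{-\gamma}p^{\dagger}$, so $(\hat p-p)/p = O_{P}(n^{\gamma/2-1})=o_{P}(1)$, whence $\hat p/p \to_{P} 1$ and $n^{1-\gamma/2} R_{n} = O_{P}(n^{\gamma/2-1})=o_{P}(1)$. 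The linear term satisfies
\[
n^{1-\gamma/2}\,\frac{\hat p - p}{p(1-p)} = \underbrace{\frac{n^{\gamma}}{p^{\dagger}(1+o(1))}\cdot\frac{1}{1-p}}_{\to 1/p^{\dagger}} \cdot n^{1-\gamma/2-\gamma}(\hat p-p),
\]
which, after collecting the powers of $n$, equals $(1/p^{\dagger})\cdot n^{1+\gamma/2}(\hat p - p)(1+o(1))$. Proposition~\ref{prop:erm} gives $n^{1+\gamma/2}(\hat p - p)\stackrel{d}{\to}N(0,2p^{\dagger})$, so by Slutsky the limit law is $N(0,2p^{\dagger}/(p^{\dagger})^{2}) = N(0,2/p^{\dagger}) = N(0,2e^{-\mu^{\dagger}})$, as claimed.

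The only subtlety I would dwell on is the $\gamma\in(0,2)$ regime, where $g'$ diverges along the data-generating sequence: one must justify that the quadratic remainder in the Taylor expansion is dominated by the linear term after rescaling by $n^{1-\gamma/2}$. Restricting attention to the high-probability event $\{\hat p/p\in[1/2,3/2]\}$ (valid because $(\hat p - p)/p = o_{P}(1)$) gives a uniform bound on $g''$ over the relevant range, which is the only nonroutine input; everything else is the ordinary delta method plus bookkeeping of the powers of $n$.
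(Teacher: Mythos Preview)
Your argument is correct and follows the same delta-method route as the paper. The only difference is bookkeeping in the $\gamma\in(0,2)$ case: the paper writes $\hat\mu-\mu=(\log n^{\gamma}\hat p-\log n^{\gamma}p)-(\log(1-\hat p)-\log(1-p))$ and applies the ordinary delta method to $n^{\gamma}\hat p$ at the \emph{fixed} limit $p^{\dagger}$, which sidesteps the explicit remainder control you carry out; your direct Taylor expansion at the drifting point $p$ with the high-probability restriction $\hat p/p\in[1/2,3/2]$ achieves the same end with a bit more work.

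One cosmetic slip to fix: in your displayed equation the factor should be $n^{1-\gamma/2}(\hat p-p)$, not $n^{1-\gamma/2-\gamma}(\hat p-p)$, and the underbraced expression $\frac{n^{\gamma}}{p^{\dagger}(1+o(1))}\cdot\frac{1}{1-p}$ diverges rather than tending to $1/p^{\dagger}$; your subsequent ``after collecting the powers of $n$'' line is nonetheless correct, so this is a typo in the intermediate step only.
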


Again the scaling factor $n^{2-\gamma}$ can be viewed as the effective sample size of the network model. 
From Proposition \ref{prop:erm} and this corollary, the Erd\H{o}s-R\'enyi model has a desirable statistical property that the MLE is asymptotically normal under a wide spectrum of sparsity levels of networks. 
  
With a single parameter, however, the Erd\H{o}s-R\'enyi model  cannot capture heavy tailedness often seen in practice. For example, when $np$ converges to a constant, the degree distribution behaves similarly to  a Poisson law for large $n$. An alternative model specifically designed for capturing degree heterogeneity  is the $\beta$-model that assigns one parameter for each node \citep{Chatterjee:etal:2011}.  In particular, this model assumes that $A_{ij}$'s are independent Bernoulli random variables  with
\begin{equation}
\label{eq:betamodel}
P(A_{ij}=1)=p_{ij}=\frac{ e^{\beta_i + \beta_j} }{ 1 + e^{\beta_i + \beta_j} },
\end{equation}
where $\vbeta=(\beta_1, \dots, \beta_n)^T \in \mR^n$ is an unknown parameter. In this model, $\beta_i$ has a natural interpretation in that it measures the propensity of node $i$ to have connections with other nodes. Namely, the larger $\beta_i$ is, the more likely node $i$ is connected to other nodes. The resulting log-likelihood under the $\beta$-model is easily seen as 
\[ 
\sum_{i=1}^n \beta_i d_i - \sum_{1 \le i<j \le n} \log(1+e^{\beta_i+\beta_j})
\]
and the degree sequence $\vd = (d_{1},\dots,d_{n})^{T}$ is thus a sufficient statistic.  Because of this, the $\beta$-model offers a simple mechanism to describe the probabilistic variation of degree sequences, which serves as an important first step towards understanding the extent to which nodes participate in network connections. More importantly, the $\beta$-model has emerged in recent years as a theoretically tractable model amenable for statistical analysis. In particular,  \cite{Chatterjee:etal:2011} prove the existence and consistency of the MLE of $\vbeta$, while \cite{Yan:Xu:2013} show its asymptotic normality. 

Despite these attractive properties,  the $\beta$-model has a limitation when sparse networks are considered. Up to now, the known sufficient condition for the MLE of the $\beta$-model to be consistent and asymptotically normal is $\max_{1\le i\le n}|\beta_i| =o(\log\log n)$ \citep{Chatterjee:etal:2011,Yan:Xu:2013}, although this condition may not be the best possible. This condition implies that 
\[
\min_{1 \le i < j \le n} p_{ij} \gg \frac{e^{-C\log\log n}}{1+e^{-C\log\log n}} \sim (\log n)^{-C}
\]
for some positive constant $C$. Under this condition,  the expected number of edges of the network should be of order at least $n^2 (\log n)^{-C}$ and hence the network will be dense up to a logarithmic factor.  Part of this requirement stems from the need to estimate $n$ parameters, so we need a sufficient number of connections for each node to estimate all the $\beta$ parameters well.

To conclude, the Erd\H{o}s-R\'enyi model is simple enough to allow desirable asymptotic properties for the MLE under a variety of sparsity levels of the network but too under-parametrized to explain many notable features of the network. On the other hand, the over-parametrized $\beta$-model is more flexible at the expense of a minimal requirement for the density of the network.  Motivated from these observations, we propose the Sparse $\beta$-Model (S$\beta$M) that retains the attractive properties of both. Specifically, the S$\beta$M assumes that $A_{ij}$'s are independent Bernoulli random variables with 
\begin{equation}\label{eq:sbm}
P(A_{ij}=1)=p_{ij}=\frac{ e^{\mu+\beta_i+\beta_j} }{ 1 + e^{\mu+\beta_i+\beta_j} },
\end{equation}
where $\mu \in \mR$ and $\vbeta \in \mR_{+}^{n}$ are both unknown parameters. 
To ensure identifiability, we require that   the elements of $\vbeta$ are nonnegative with at least one element equal to zero, i.e., $\min_{1 \le i \le n} \beta_i= 0$. Hence $\| \vbeta \|_{0} \le n-1$. 
A key assumption we make on the S$\beta$M is that $\vbeta$ is sparse, {hence the name sparse $\beta$-model}. We are mainly  interested in the case where $\| \vbeta \|_{0}  \ll n$.

In this model, $\mu \in \mathbb{R}$ can be understood as the intercept, a baseline term that may tend to $-\infty$ as $n \to \infty$, which allows various sparsity levels for the network similarly to the role of $\mu$ in the Erd\H{o}s-R\'enyi model. Thus $\mu$ is the global parameter characterizing the sparsity of the entire network.
On the other hand, 
 $\vbeta \in \mathbb{R}^n_+$  is a vector of node specific parameters.  It can be understood that node $i$ has no individual effect in forming connections if  $\beta_i=0$, and therefore $\beta_{i}$ controls the local density of the network around node $i$ in addition to its baseline parameter $\mu$.  Such separate treatment of the global and local parameters corresponds to the roles that core and peripheral nodes play in a network. In the context of the microfinance example in Figure \ref{fig:1}, this model allows us to differentially assign parameters only {to certain nodes that are considered ``core''}. In Figure \ref{fig:2}, three simulated examples with $n=50$, $100$ and $200$ are presented to give a general idea of the networks generated from our model, where cores and peripherals are highly visible.

\begin{figure}[hbt!]
\centering
\begin{subfigure}[b]{.32\textwidth}
\includegraphics[width=\textwidth]{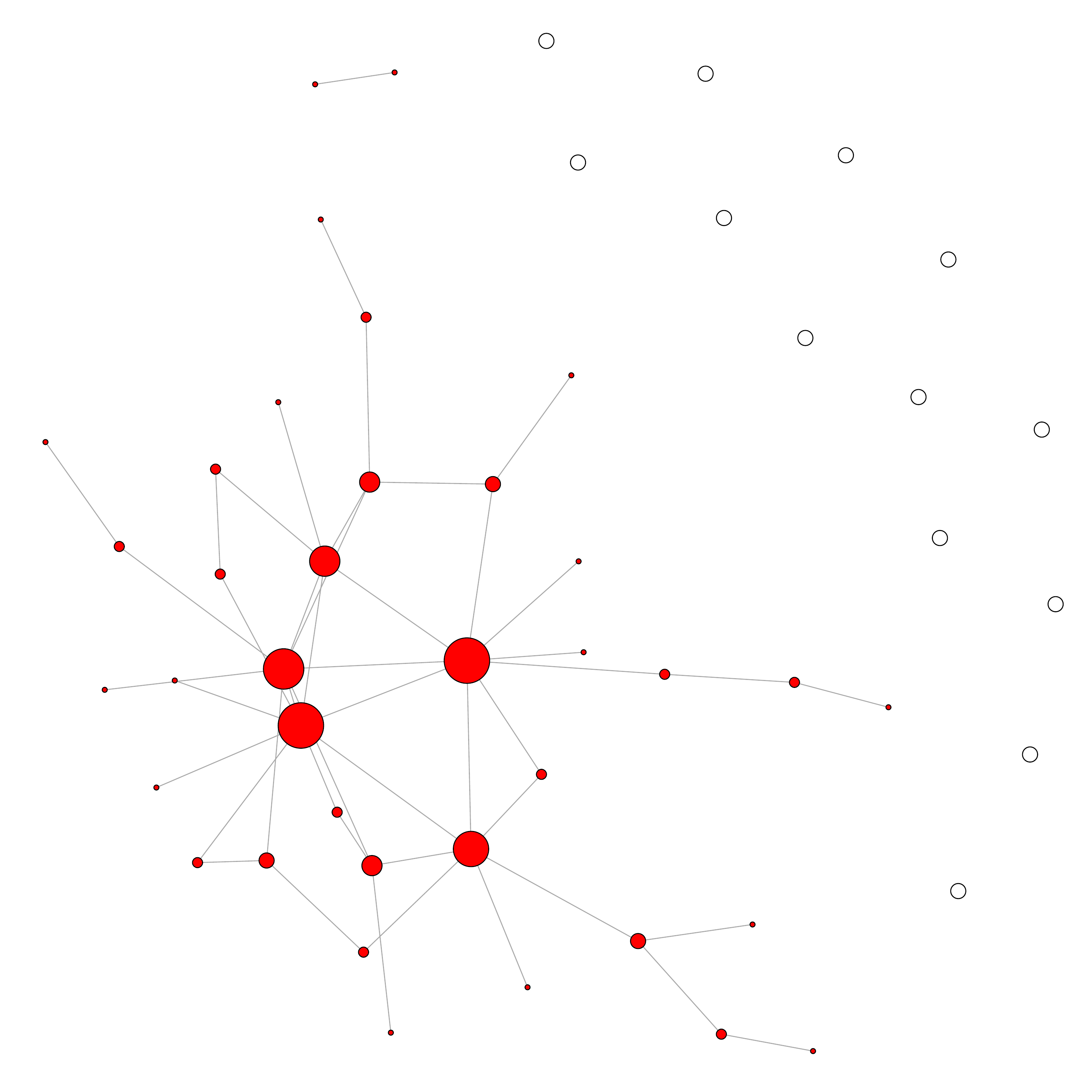}
\end{subfigure}
\begin{subfigure}[b]{.32\textwidth}
\includegraphics[width=\textwidth]{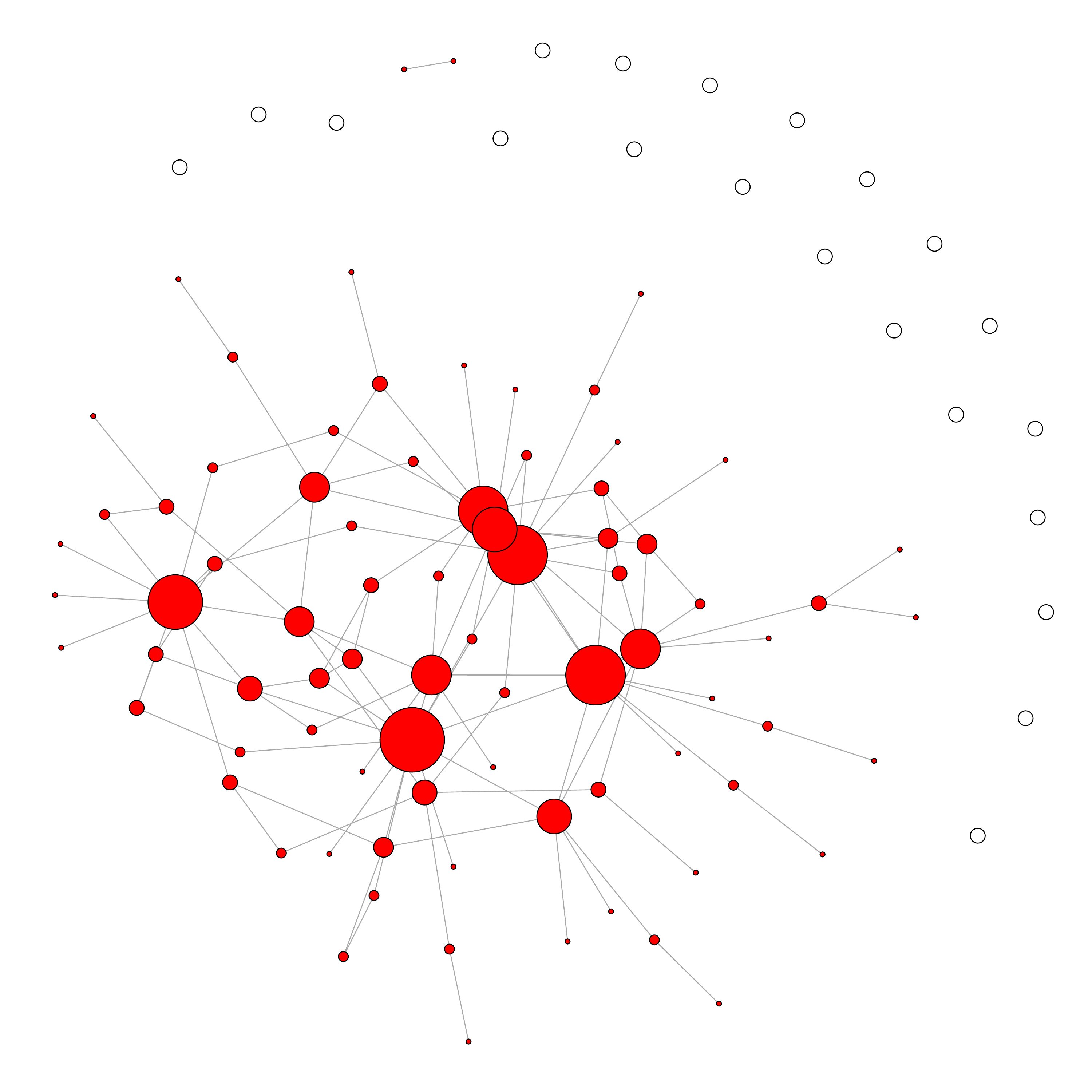}
\end{subfigure}
\begin{subfigure}[b]{.32\textwidth}
\includegraphics[width=\textwidth]{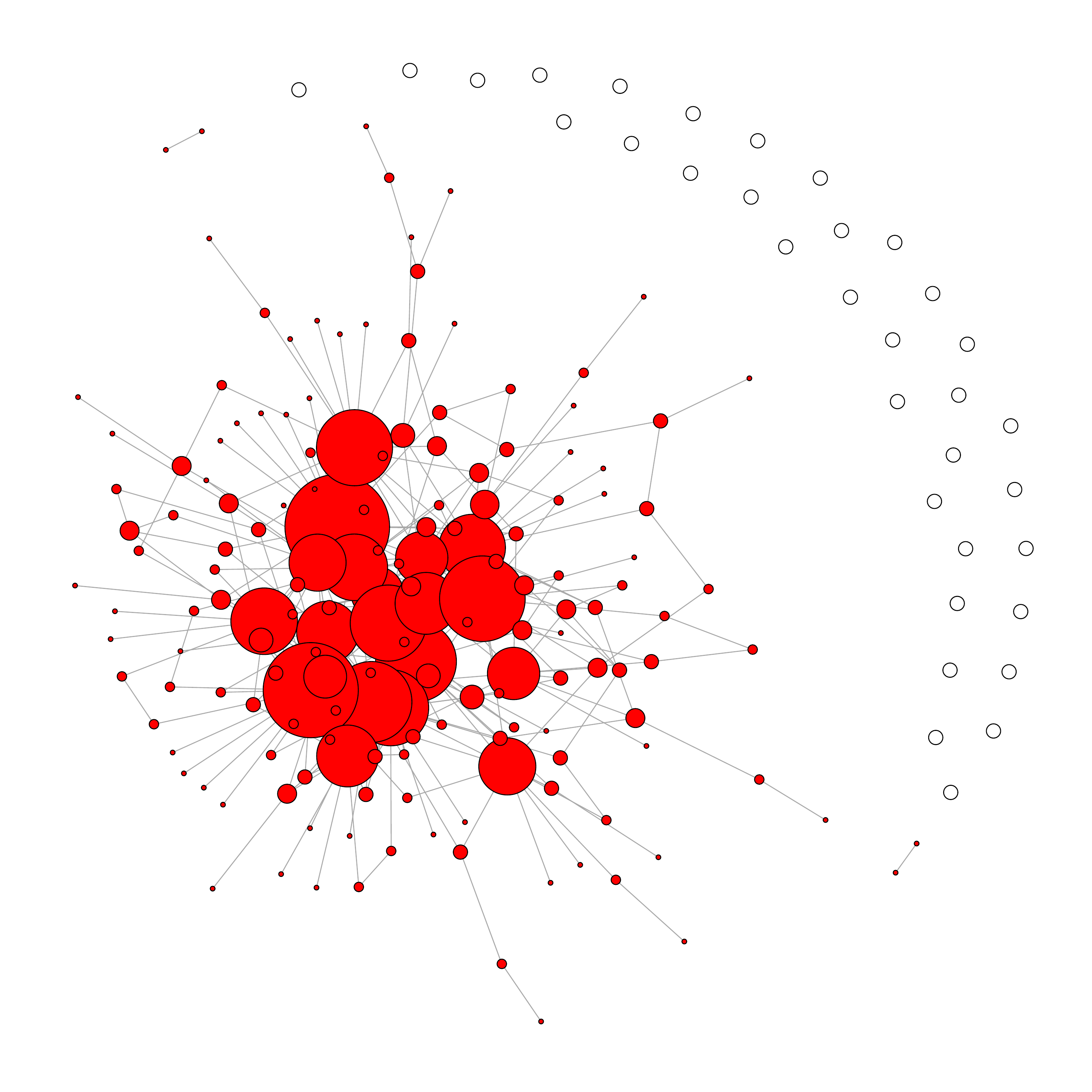}
\end{subfigure}
\caption{\label{fig:2} Some sample networks generated from the S$\beta$M. Left: $n=50$, Middle: $n=100$, Right: $n=200$. The size of the vertex is proportional to its degree. The size of the support of $\vbeta$ is set as $n/10$ with $\beta_i=\sqrt{\log{n}}$ or $0$, while $\mu=-\log n$.}
\end{figure}

Without the sparsity assumption on $\vbeta$, the S$\beta$M reduces to a reparametrized version of the $\beta$-model by shifting $\beta_i$ in the latter by $\mu/2$.  On the other extreme end when $\|\vbeta\|_0=0$, the S$\beta$M reduces to the  Erd\H{o}s-R\'enyi model. Thus, the S$\beta$M interpolates the  Erd\H{o}s-R\'enyi and  $\beta$-models.  By allowing the sparsity level $\| \vbeta \|_{0}$ to be much smaller than $n$, the S$\beta$M can drastically reduce the number of parameters needed in the $\beta$-model, and, as will be discussed in Section \ref{sec:estimation}, allow parameter estimators with desirable statistical properties under sparse network regimes.

We note that \cite{Mukherjee:etal:2019} consider a different reparameterization of the $\beta$-model by inducing a different form of sparsity. Specifically, they consider the model
\[
P_{ij}=P(A_{ij}=1) = \frac{\lambda}{n} \frac{e^{\beta_{i}+\beta_{j}}}{1+e^{\beta_{i}+\beta_{j}}}.
\]
Different from ours, 
the focus of \cite{Mukherjee:etal:2019} is  on testing the hypothesis $H_0: \vbeta=\bm{0}$ against the alternative that $\vbeta$ is nonzero but sparse. 
In addition, they do not consider  estimation of the parameters when $\vbeta$ is sparse, and assume that $\lambda$ is a known constant, which should be contrasted with our S$\beta$M where both $\mu$ and $\vbeta$ are unknown parameters.

An interesting questions arises whether the model in \cite{Mukherjee:etal:2019} can be developed as an alternative to the S$\beta$M in this paper. Noting that for identifiability, their model also requires $\beta_i \ge 0$, we can see that in their model, $P_{ij}$ is always between $\frac{\lambda}{2n}$ and $\frac{\lambda}{n}$. Basically what this says is that the expected number of connections each node can have is of the same order. By contrast, the S$\beta$M allows the expected number of connections that a node can have to differ substantially, which makes the S$\beta$M well suited for networks where there are simultaneously nodes with many connections and nodes with few connections; see the discussion after Theorem 1.

\section{Parameter Estimation in S$\beta$M}
\label{sec:estimation}
In this section, we consider estimation of the parameters in the S$\beta$M. We will denote the true parameter value of $(\mu,\vbeta)$ by $(\mu_0,\vbeta_0)$. 
We first discuss the case where the support of $\vbeta_{0}$ is known. 
We consider the known support case for a theoretical purpose to study the properties of the S$\beta$M. 
Theorem \ref{thm:asymnorm} below reveals two important theoretical properties of the S$\beta$M: 1) the MLE of the parameters in the S$\beta$M can achieve consistency and asymptotic normality under sparse network regimes, and 2) the S$\beta$M can also capture the heterogeneous density patterns for the individual nodes. 
Next, we consider a more practically relevant case where the support is unknown and study the $\ell_{0}$-penalized MLE.

\subsection{MLE with a known support}\label{section:known}
First, we consider the case where $S=S(\vbeta_0)$, the support of $\vbeta_0$, is known and study the asymptotic properties of the MLE for $(\mu_{0},\vbeta_{0S})$. 
The cardinality of the support $s_{0}=|S| = \| \vbeta_{0} \|_{0}$ may grow with the sample size $n$, i.e., $s_{0}=s_{0n} \to \infty$ {as $n \to \infty$}. 
Similarly to \cite{Krivitsky:etal:2011} and \cite{Krivitsky:Kolaczyk:2015}, we also introduce $\log n$ shifts to the parameters to accommodate sparsity of the network in the theoretical setup, and consider the statistical properties of the MLE of the scale-invariant parameters of the S$\beta$M. Specifically, we consider the reparameterization 
\begin{equation}
\label{eq: reparameterization}
\mu = - \gamma \log n + \mu^{\dagger} \quad \text{and} \quad \beta_{i} = \alpha \log n + \beta_{i}^{\dagger} \  \text{for all} \ i \in S
\end{equation}
for some $\gamma \in [0,2)$ and $\alpha \in [0, 1)$ such that $0 \le \gamma-\alpha < 1$. The parameters $\alpha$ and $\gamma$ control the  global sparsity and local density of the network; see the discussion after Theorem \ref{thm:asymnorm}. 
In what follows, for two positive sequences $a_n$ and $b_n$, we write $a_n = \overline{o}(b_n)$ if $a_n = O(n^{-c}b_n)$ for some (sufficiently small) fixed constant $c>0$.  Define $a_n = \overline{o}_{P}(b_n)$ analogously when $a_n$ and $b_n$ are stochastic. 

\begin{theorem}[Consistency and asymptotic normality of MLE with known support]
\label{thm:asymnorm}
Consider the reparameterization (\ref{eq: reparameterization}) for some $\gamma \in [0,2)$ and $\alpha \in [0, 1)$ such that $0 \le \gamma-\alpha < 1$, 
 and let $[-M_1^{\dagger},M_1^{\dagger}] \times [0,M_2^{\dagger}]^{s_{0}}$ be the parameter space for $(\mu^{\dagger},\vbeta_{S}^{\dagger})$ where $M_1^{\dagger} > 0$ and $M_2^{\dagger} > 0$ may depend on $n$ but satisfies $M_1^{\dagger} \vee M_2^{\dagger} = o(\log n)$.  Denote by $(\mu_{0}^{\dagger},\vbeta_{0S}^{\dagger})$ the true parameter value for $(\mu^{\dagger},\vbeta_{S}^{\dagger})$ (the true parameter value may depend on $n$ but has to belong to the parameter space $[-M_1^{\dagger},M_1^{\dagger}] \times [0,M_2^{\dagger}]^{s_{0}}$). Let $(\hat{\mu}^{\dagger},\hat{\vbeta}_{S}^{\dagger})$ be an MLE of $(\mu^{\dagger},\vbeta_{S}^{\dagger})$ over the parameter space $[-M_1^{\dagger},M_1^{\dagger}] \times [0,M_2^{\dagger}]^{s_{0}}$ (the MLE need not be unique). Then: 
\begin{enumerate}
\item[(i)] If in addition $s_0 = \overline{o}(n^{1-\alpha})$, then the MLE $(\hat{\mu}^{\dagger},\hat{\vbeta}_{S}^{\dagger})$  is uniformly consistent in the sense that $\hat{\mu}^{\dagger} = \mu_{0}^{\dagger} + \overline{o}_{P}(1)$ and $\max_{i \in S} | \hat{\beta}^{\dagger}_{i} - \beta_{0i}^{\dagger}| = \overline{o}_{P}(1)$.
\item[(ii)]
If in addition $|\mu_{0}^{\dagger}| \le M_1^{\dagger} -\eta$, $\eta \le \min_{i \in S} \beta_{0i}^{\dagger} \le \max_{i \in S} \beta_{0i}^{\dagger} \le M_2^{\dagger} - \eta$ for some small constant $0 < \eta < M_1^{\dagger} \wedge M_2^{\dagger}$ independent of $n$, and $s_0 = \overline{o}(n^{(1-\alpha)/2})$, then for any fixed subset $F \subset S$, we have 
\[
\Sigma_{F}^{-1/2}
\begin{pmatrix}
n^{1-\gamma/2} (\hat{\mu}^{\dagger}-\mu_{0}^{\dagger}) \\
n^{1/2-(\gamma - \alpha)/2} (\hat{\beta}_{i}^{\dagger} - \beta_{0i}^{\dagger})_{i \in {F}}
\end{pmatrix}
\stackrel{d}{\to} 
N  ( \bm{0}, 
I_{1+|F|}) \quad \text{{as $n \to \infty$}},
\]
where $\Sigma_{F}$ is the diagonal matrix with diagonal entries 
\[
\begin{cases}
2e^{-\mu_{0}^{\dagger}} \ \text{and} \ e^{-\mu_{0}^{\dagger}-\beta_{0i}^{\dagger}} \ \text{for} \ i \in F & \text{if $\alpha < \gamma$} \\
2e^{-\mu_{0}^{\dagger}} \ \text{and} \ 2+e^{-\mu_{0}^{\dagger}-\beta_{0i}^{\dagger}}+e^{\mu_{0}^{\dagger}+\beta_{0i}^{\dagger}} \ \text{for} \ i \in F & \text{if $\gamma = \alpha \in (0,1)$} \\
4 + 2e^{-\mu_{0}^{\dagger}} + 2e^{\mu_{0}^{\dagger}} \ \text{and} \ 2+e^{-\mu_{0}^{\dagger}-\beta_{0i}^{\dagger}}+e^{\mu_{0}^{\dagger}+\beta_{0i}^{\dagger}} \ \text{for} \ i \in F & \text{if $\gamma = \alpha = 0$}
\end{cases}
.
\]
\end{enumerate}
\end{theorem}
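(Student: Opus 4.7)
Strategy and setup. I would work with the strictly concave reparameterized log-likelihood $\ell_n(\theta) = \sum_{i<j}[A_{ij}(\mu + \beta_i + \beta_j) - \log(1 + e^{\mu + \beta_i + \beta_j})]$ viewed as a function of $\theta = (\mu^{\dagger}, \vbeta_S^{\dagger})$ (with $\beta_k = 0$ for $k \notin S$), and let $H_n(\theta) = -\nabla^2 \ell_n(\theta)$. Direct differentiation gives
\[
H_n(\theta)_{\mu \mu} = \sum_{i<j} r_{ij}, \quad H_n(\theta)_{\mu \beta_k} = H_n(\theta)_{\beta_k \beta_k} = \sum_{j \ne k} r_{kj}, \quad H_n(\theta)_{\beta_k \beta_l} = r_{kl},
\]
where $r_{ij} = p_{ij}(1 - p_{ij})$. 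Introduce the diagonal rescaling $D = \diag(n^{(2-\gamma)/2}, n^{(1-\gamma+\alpha)/2} I_{s_0})$ matching the claimed rates. Because each component of $\nabla \ell_n(\theta_0) = (d_+ - E d_+, (d_k - E d_k)_{k \in S})^{T}$ is a sum of independent centered Bernoullis, Bernstein's inequality together with a union bound over $S$ gives $\| D^{-1}\nabla \ell_n(\theta_0) \|_\infty = O_P(\sqrt{\log s_0})$.

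Part (i). By concavity the basic inequality $\ell_n(\hat\theta) \geq \ell_n(\theta_0)$ combined with a second-order Taylor expansion gives $(\hat\theta - \theta_0)^{T} H_n(\tilde\theta)(\hat\theta - \theta_0) \leq 2 \nabla \ell_n(\theta_0)^{T} (\hat\theta - \theta_0)$ for some $\tilde\theta$ on the segment between $\theta_0$ and $\hat\theta$. A direct rate computation (using $M_1^{\dagger} \vee M_2^{\dagger} = o(\log n)$) shows that the rescaled Hessian $D^{-1} H_n(\theta_0) D^{-1}$ has diagonal entries bounded above and below by positive constants, while the off-diagonal entries are of order $n^{\alpha-1}$ inside the $\beta$-block and $n^{(\alpha-1)/2}$ in the $\mu$--$\beta$ cross entries. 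The consistency hypothesis $s_0 = \overline{o}(n^{1-\alpha})$ drives the maximum off-diagonal row sum to be $\overline{o}(1)$, so $D^{-1} H_n(\theta_0) D^{-1}$ is diagonally dominant and uniformly well-conditioned w.h.p.; Lipschitzness of $r_{ij}$ in $\theta$ together with $M_1^{\dagger} \vee M_2^{\dagger} = o(\log n)$ extends the same bound to $\tilde\theta$. Combining, via the weighted $\ell_{\infty}$ operator norm of a diagonally dominant matrix, yields $\| D(\hat\theta - \theta_0) \|_\infty = O_P(\sqrt{\log s_0})$, which translates coordinate-wise into the claimed $\overline{o}_P(1)$ convergence.

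Part (ii). Under the buffer $\eta$, consistency places $\hat\theta$ in the interior w.h.p., so the first-order condition $\nabla \ell_n(\hat\theta) = 0$ holds. Taylor-expanding the score gives
\[
D(\hat\theta - \theta_0) = \bigl(D^{-1} H_n(\theta_0) D^{-1}\bigr)^{-1} D^{-1} \nabla \ell_n(\theta_0) + R_n,
\]
with a remainder $R_n$ bounded by the cubic Taylor bound together with the Part (i) rate. The key step is to show that the $(1 + |F|) \times (1 + |F|)$ sub-block of $(D^{-1} H_n(\theta_0) D^{-1})^{-1}$ corresponding to $\{\mu\} \cup F$ converges in probability to $\Sigma_F = \Lambda_F^{-1}$, where $\Lambda_F$ is the diagonal matrix of limits of $n^{-(2-\gamma)} H_{\mu\mu}$ and $n^{-(1-\gamma+\alpha)} H_{\beta_k\beta_k}$ for $k \in F$. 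Block inversion together with the off-diagonal bounds above gives this approximation with error of order $s_0 \cdot n^{\alpha-1}$ (from the $\beta$-block cross entries) and $n^{\alpha-1}$ (from Schur-complementing away the $\mu$--$\beta$ coupling against the much larger $H_{\mu\mu}$). The strengthened hypothesis $s_0 = \overline{o}(n^{(1-\alpha)/2})$ is exactly what makes these errors $\overline{o}_P(1)$. With the Hessian reduced to its diagonal limit, the joint asymptotic distribution follows from the Lyapunov CLT applied to the finite collection of scaled Bernoulli sums $n^{-(1-\gamma/2)}(d_+ - E d_+)$ and $n^{-(1-\gamma+\alpha)/2}(d_k - E d_k)$, $k \in F$, combined through the Cram\'er--Wold device.

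Case distinction and main obstacle. The three diagonal forms of $\Sigma_F$ follow by computing $\Lambda_F$ in the three regimes. For $\alpha < \gamma$ all $p_{0ij}$ vanish so $r_{0ij} \sim p_{0ij}$; the sum $\sum_{i<j} r_{0ij}$ is dominated by $(S^{c}, S^{c})$-pairs giving limit $e^{\mu_0^{\dagger}}/2$, while $\sum_{j \ne k} r_{0kj}$ is dominated by $(k, S^{c})$-pairs giving limit $e^{\mu_0^{\dagger} + \beta_{0k}^{\dagger}}$. For $\gamma = \alpha \in (0,1)$ the $(k, S^{c})$ probabilities stabilize at order one, so $\Lambda_{\beta_k}$ becomes $\pi_{0k}(1 - \pi_{0k})$ with reciprocal $2 + e^{-\mu_0^{\dagger} - \beta_{0k}^{\dagger}} + e^{\mu_0^{\dagger} + \beta_{0k}^{\dagger}}$; $\Lambda_\mu$ is unchanged. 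For $\gamma = \alpha = 0$ every pair is order one, so $\Lambda_\mu = \pi_0(1-\pi_0)/2$ with reciprocal $4 + 2e^{-\mu_0^{\dagger}} + 2e^{\mu_0^{\dagger}}$. The main obstacle throughout is the uniform control of the $s_0$-dimensional $\beta$-block: diagonal dominance is tight at $s_0 = \overline{o}(n^{1-\alpha})$ for invertibility and at $s_0 = \overline{o}(n^{(1-\alpha)/2})$ for the inversion error to vanish after rescaling, and the arrow coupling to $\mu$ is delicate because each $(\mu, \beta_k)$ entry of $H_n$ is of the same order as $H_{\beta_k\beta_k}$ and so is not individually small — it is only eliminated by Schur-complementing against $H_{\mu\mu}$, which lives on the strictly larger scale $n^{2-\gamma}$.
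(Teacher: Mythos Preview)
Your Part (i) argument has a real gap. The diagonal-dominance claim for $D^{-1}H_n(\theta_0)D^{-1}$ fails at the $\mu$-row: its off-diagonal row sum is $\sum_{k\in S}(D^{-1}H_n D^{-1})_{\mu\beta_k}\sim s_0\,n^{(\alpha-1)/2}$ (up to $e^{o(\log n)}$ factors), and this is $\overline{o}(1)$ only when $s_0=\overline{o}(n^{(1-\alpha)/2})$ --- the \emph{Part (ii)} condition, not the weaker Part (i) condition $s_0=\overline{o}(n^{1-\alpha})$. Retreating to a Schur-complement/minimum-eigenvalue argument does give a uniform lower bound on the rescaled Hessian under the weak condition, but the basic inequality then only yields $\|D(\hat\theta-\theta_0)\|_2=O_P(\sqrt{s_0})$, which does not imply $\max_i|\hat\beta_i^{\dagger}-\beta_{0i}^{\dagger}|=\overline{o}_P(1)$ in general (take e.g.\ $\gamma>2\alpha$). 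The paper avoids the joint Hessian in Part (i) altogether and argues via \emph{concentrated} likelihoods sequentially: the profile negative log-likelihood for $\mu^{\dagger}$ differs from its Erd\H os--R\'enyi part $-\mu^{\dagger}d_{+}+\binom{n-s_0}{2}\log(1+n^{-\gamma}e^{\mu^{\dagger}})$ by at most $O\bigl(e^{o(\log n)}(s_0\,n^{1-(\gamma-\alpha)}+s_0^2 n^{-(\gamma-2\alpha)})\bigr)=\overline{o}(n^{2-\gamma})$, using only the crude bound $\hat\beta_i^{\dagger}\in[0,M_2^{\dagger}]$ with \emph{no rate} on $\hat\vbeta_S^{\dagger}$. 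This delivers $\hat\mu^{\dagger}=\mu_0^{\dagger}+\overline{o}_P(1)$ directly under $s_0=\overline{o}(n^{1-\alpha})$; once $\hat\mu^{\dagger}$ is consistent, each profile likelihood for $\beta_i^{\dagger}$ is handled uniformly in $i$ via Bernstein and a union bound. It is precisely this sequential decoupling that buys the weaker sparsity assumption in Part (i).

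Your Part (ii) is essentially the paper's argument in different packaging: the paper expands the $\mu$- and $\beta$-score equations separately and iterates the resulting rate bounds, rather than inverting the full Hessian, but the content is the same. One accounting slip: the inversion errors you list, $s_0 n^{\alpha-1}$ and $n^{\alpha-1}$, are already $\overline{o}(1)$ under the \emph{weak} condition, so they cannot be what forces $s_0=\overline{o}(n^{(1-\alpha)/2})$. The term that actually requires the strengthened hypothesis is the off-diagonal part of $(D^{-1}H_n D^{-1})^{-1}$ applied to the score: the $(\mu,\beta_k)$-entries of the inverse are each $\sim n^{(\alpha-1)/2}$, and multiplying by the $s_0$ rescaled $\beta$-scores (each $O_P(1)$) and summing via the triangle inequality gives $s_0\,n^{(\alpha-1)/2}$, which is $\overline{o}_P(1)$ exactly when $s_0=\overline{o}(n^{(1-\alpha)/2})$. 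In the paper's iterative language this is the step where the cross term $s_0\,n^{1-(\gamma-\alpha)}\hat\delta$ with $\hat\delta=O_P(e^{o(\log n)}n^{-1/2+(\gamma-\alpha)/2})$ must be $\overline{o}_P(n^{1-\gamma/2})$ in the $\mu$-score equation.
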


Some comments on the theorem are in order. In what follows, we focus on the case where $\alpha < \gamma$ for simplicity of exposition. 
The expected number of edges of the S$\beta$M under the condition of the preceding theorem is
\[ 
\begin{split}
D_{+} &= E[d_{+}] = \sum_{1 \le i < j \le n} p_{ij} \\
&=\binom{n-s_0}{2} \frac{n^{-\gamma}e^{\mu_{0}^{\dagger}}}{1+n^{-\gamma} e^{\mu_{0}^{\dagger}}}+(n-s_0) \sum_{i \in S} \frac{n^{-(\gamma-\alpha)}e^{\mu_{0}^{\dagger}+\beta_{0i}^{\dagger}}}{1+n^{-(\gamma-\alpha)}e^{\mu_{0}^{\dagger}+\beta_{0i}^{\dagger}}}+\sum_{\substack{i,j \in S \\ i < j }} \frac{n^{-(\gamma-2\alpha)}e^{\mu_{0}^{\dagger}+\beta_{0i}^{\dagger}+\beta_{0j}^{\dagger}}}{1+n^{-(\gamma-2\alpha)}e^{\mu_{0}^{\dagger}+\beta_{0i}^{\dagger}+\beta_{0j}^{\dagger}}} \\
&= n^{2-\gamma}e^{\mu_{0}^{\dagger}}/2 + \overline{o}(n^{2-\gamma}),
\end{split}
\]
provided that $s_0 = \overline{o}(n^{1-\alpha})$ (see the proof of Theorem \ref{thm:asymnorm}). 
Theorem \ref{thm:asymnorm} shows that the MLE of the parameters in the S$\beta$M (when the support is known) can achieve consistency and asymptotic normality for sparse networks, as the total number of expected edges is allowed to be of the order $O(n^{2-\gamma})$.  Put another way, in the S$\beta$M, the sparsity in the parameter $\vbeta$ permits statistical inference for sparse networks. 
Our result should be contrasted with the $\beta$-model where the MLE is known to be consistent and asymptotically normal only for relatively dense networks. 

In addition, the S$\beta$M can also capture the heterogeneous density patterns for the individual nodes in the sense that
\[
E[d_{i}] =
\begin{cases}
n^{1-(\gamma-\alpha)} e^{\mu_{0}^{\dagger} + \beta_{0i}^{\dagger}} + \overline{o}(n^{1-(\gamma-\alpha)}) & \text{if $i \in S$} \\
n^{1-\gamma} e^{\mu_{0}^{\dagger}} + \overline{o}(n^{1-\gamma}) & \text{if $i \notin S$}
\end{cases}
.
\]
Intuitively speaking, $\gamma$ (or the magnitude of $\mu$) controls the global sparsity while $\alpha$ (or the magnitude of $\beta_i$) is controlling the local density. Namely, as $\gamma$ increases, all nodes will be less likely to be connected, while as $\alpha$ increases, the nodes in the support of $\vbeta$ will be more likely to be connected. Theorem \ref{thm:asymnorm} shows how these global and local parameters affect the effective sample sizes for $\mu$ (global parameter) and $\vbeta_{S}$ (local parameter). The theorem implies that the effective sample size for the global parameter is $n^{2-\gamma}$ which is decreasing in $\gamma$ similarly to the Erd\H{o}s-R\'enyi graph (see the discussion after Corollary \ref{cor:erm}), while that for the local parameter is $n^{1-\gamma+\alpha}$ which is decreasing in $\gamma$ but increasing in $\alpha$. The expected total number of edges allowed by the S$\beta$M is of the order
\[
 s_0n^{1-\gamma+\alpha}+(n-s_0)n^{1-\gamma} \sim n^{2-\gamma}+s_0(n^{1-\gamma+\alpha}-n^{1-\gamma}),
\]
which is $O(n^{2-\gamma})$ if $s_0 = o (n^{1-\alpha})$. Thus, the S$\beta$M can model very sparse networks. To our best knowledge, we are not aware of any models with estimators enjoying similar consistency and asymptotic normality properties for such sparse networks, apart from the models studied by \cite{Krivitsky:Kolaczyk:2015} that are much simpler. 

Finally, the proof of Theorem \ref{thm:asymnorm} is nontrivial since there are two types of parameters with different rates, one being common to the nodes and the other being node-specific, and the number of parameters $1+s_{0}$ may diverge as $n$ increases, which is reminiscent of the incidental parameter problem \citep{NeymanScott1948, Li:etal:2003,HahnNewey2004}. 
To prove the uniform consistency, we work with the concentrated negative log-likelihoods for $(\mu^{\dagger},\vbeta_{S}^{\dagger})$ and show that they converge in probability to some nonstochastic functions uniformly in $i \in S$. To prove the asymptotic normality, we use iterative stochastic expansions to derive the uniform asymptotic linear representations for $(\hat{\mu}^{\dagger},\hat{\vbeta}_{S}^{\dagger})$. See the proof in Appendix \ref{sec:proof thm1} in the supplementary material for the details.

\subsection{$\ell_{0}$-penalized MLE with an unknown support}\label{section:unknown}
In practice, the support of $\vbeta_{0}$  is  usually unknown. In this section,
we consider and analyze the $\ell_0$-norm constrained maximum likelihood estimator for estimating the parameters of the model when this is the case.  
The negative log-likelihood of the S$\beta$M is given by
\[
\ell_n(\mu,\vbeta)= -d_{+} \mu - \sum_{i=1}^{n}  d_i \beta_i+ \sum_{1 \le i<j \le n} \log\left(1+e^{\mu+\beta_i+\beta_j}\right).
\]
Then, we shall estimate the parameters as 
\begin{equation}
(\hat\mu(s), \hat\vbeta(s)) =\argmin_{\mu \in \mathbb{R},\vbeta \in \mathbb{R}_{+}^{n}}~\ell_n(\mu,\vbeta) \quad \text{subject to} \  \|\vbeta\|_0 \le s,\label{eq:ple}
\end{equation}
where $s \in \{1, 2, \dots, n-1 \}$ is an integer-valued tuning parameter. We restrict $s$ to be less than $n$ so that the identifiability condition $\min_{1 \le i \le n}\beta_i= 0$ is automatically satisfied. If there is a question of the existence of the global optimal solution in (\ref{eq:ple}), we restrict the parameter space to be a (sufficiently large) compact rectangle. We refer to Appendix \ref{sec: MLE existence} in the supplementary material for some discussion on the existence of $\ell_0$-constrained MLE in (\ref{eq:ple}). 

The optimization problem \eqref{eq:ple} is a combinatorial problem that seems difficult to solve. For each $s$, a naive approach to compute the solution of \eqref{eq:ple} is to fit $\binom{n}{s}$ models, each assuming $s$ out of $n$ parameters in the S$\beta$M are nonzero, and then choose the model that gives the smallest negative log-likelihood. This strategy is used routinely in the so-called best subset selection for regression models, which is known for being unsuitable for datasets with a large number of parameters. Remarkably, the S$\beta$M has a property that at most only $n-1$ models need to be examined before the optimal choice is decided, making it attractive computationally. In particular, we have the following monotonicity lemma stating that the entries of $\hat\vbeta (s)= (\hat{\beta}_{1}(s),\dots,\hat{\beta}_{n}(s))^{T}$ are ordered according to those of the degree sequence $\bm{d} = (d_{1},\dots,d_{n})^{T}$.  Before presenting this lemma, we introduce the following notation to handle tied degrees. 
Let 
\begin{equation}
d_{(1)}>d_{(2)}>\cdots>d_{(m)}
\end{equation}
denote the distinctive values of $d_i$'s. Denote by $S_k$ the set of indices of those $d_i$'s that equal to $d_{(k)}$ and by $s_k$  its cardinality; that is, $S_k=\{ i \in \{ 1,\dots, n \}: d_i=d_{(k)}\}$ and $s_{k} = |S_{k}|$. By definition, $\sum_{k=1}^m s_k=n$. If no two degrees are tied, then $m=n$ and $s_k=1$ for any $k=1,\dots,n$.

\begin{lemma}[Monotonicity lemma]\label{lemma:1}
\label{lem: sorting}
The estimate $\hat{\vbeta}(s)$ in \eqref{eq:ple} has the following properties.
\begin{itemize}
\item[(i)] If $d_i<d_j$, then we have $\hat\beta_i (s) \le  \hat\beta_j (s)$ for any $s<n$;
\item[(ii)] If $d_i=d_j$, then we have $\hat\beta_i (s) = \hat\beta_j (s)$ for  any $s$ such that $s=\sum_{k=1}^K s_k$ for some $K\le m-1$.
\end{itemize}
\end{lemma}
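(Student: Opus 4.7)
The proof rests on a coordinate-swap identity for $\ell_n$ combined with strict convexity of the log-likelihood on a suitable restricted support.

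\textbf{Part (i).} I would argue by contradiction. Suppose $d_i < d_j$ but $\hat\beta_i(s) > \hat\beta_j(s)$, and let $\tilde\vbeta$ be the vector obtained from $\hat\vbeta(s)$ by interchanging coordinates $i$ and $j$. Then $\tilde\vbeta \in \mR_+^n$, $\|\tilde\vbeta\|_0 = \|\hat\vbeta(s)\|_0 \le s$, and $\min_k \tilde\beta_k = 0$, so $\tilde\vbeta$ is feasible for \eqref{eq:ple}. The double-sum term $\sum_{k<l}\log(1+e^{\mu+\beta_k+\beta_l})$ is invariant under any permutation of the coordinates of $\vbeta$ (each summand is merely relabeled among the pairs not containing $\{i,j\}$, the term on the pair $\{i,j\}$ depends only on $\beta_i+\beta_j$, and pairs of the form $\{i,k\}$ get exchanged with pairs $\{j,k\}$). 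Hence the change in $\ell_n(\hat\mu(s),\cdot)$ collapses to the linear part, giving
\[
\ell_n(\hat\mu(s),\tilde\vbeta) - \ell_n(\hat\mu(s),\hat\vbeta(s)) \;=\; (d_i - d_j)\bigl(\hat\beta_i(s) - \hat\beta_j(s)\bigr) \;<\; 0,
\]
contradicting the optimality of $\hat\vbeta(s)$.

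\textbf{Part (ii).} Set $T_K = \bigcup_{k=1}^K S_k$, so $|T_K| = s$. The swap argument from part (i) first shows that $\supp(\hat\vbeta(s)) \subseteq T_K$: if some $i \in T_K^c$ had $\hat\beta_i(s) > 0$, then since $\|\hat\vbeta(s)\|_0 \le s = |T_K|$ some $j \in T_K$ must satisfy $\hat\beta_j(s) = 0$, and swapping coordinates $i$ and $j$ strictly decreases $\ell_n$ because $d_i < d_j$ and $\hat\beta_i(s) - \hat\beta_j(s) > 0$. Consequently, \eqref{eq:ple} reduces to minimizing $\ell_n$ over the convex set $C=\{(\mu,\vbeta):\mu\in\mR,\;\vbeta_{T_K}\in\mR_+^{|T_K|},\;\vbeta_{T_K^c}=\bm{0}\}$, with the $\ell_0$-constraint now automatic.

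On $C$ the function $\ell_n$ is strictly convex in $(\mu,\vbeta_{T_K})$: each summand $\log(1+e^{\mu+\beta_k+\beta_l})$ is strictly convex in its linear argument, and the associated $(s+1)$-column design matrix has full rank because $T_K^c$ is nonempty ($K\le m-1$ implies $|T_K^c|\ge s_m \ge 1$)---indeed, pairs $\{k,i_*\}$ with $i_*\in T_K^c$ and $k\in T_K$ contribute $\mu+\beta_k$, and combined with pairs inside $T_K$ these equations force the null vector to be zero. Therefore the minimizer $(\tilde\mu,\tilde\vbeta)$ over $C$ is unique. Finally, for any permutation $\pi$ of $\{1,\dots,n\}$ that fixes each tied-degree group $S_k$ setwise, one has $\ell_n(\mu,\vbeta) = \ell_n(\mu,\vbeta\circ\pi)$ (the linear term $\sum_k d_k\beta_k$ is invariant because $d_i=d_j$ within each $S_k$, and the double-sum is invariant under any coordinate permutation by the same argument as in part (i)); by uniqueness, $(\tilde\mu,\tilde\vbeta)$ must itself be invariant under such $\pi$, which forces $\tilde\beta_i=\tilde\beta_j$ whenever $d_i=d_j$. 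Any MLE of \eqref{eq:ple} coincides with $(\tilde\mu,\tilde\vbeta)$, and ties in $T_K^c$ give zero trivially.

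\textbf{Main obstacle.} The delicate step is the reduction to a purely convex program in part (ii). The assumption $s=\sum_{k=1}^K s_k$ is used precisely so that the support of every MLE can be pinned down to all of $T_K$; were $s$ to lie strictly between $\sum_{k=1}^{K-1}s_k$ and $\sum_{k=1}^K s_k$, the swap argument would force $\supp(\hat\vbeta(s))\subseteq T_K$ but not rule out a proper (and non-unique) choice of a subset of the boundary layer $S_K$, and genuinely distinct MLEs with unequal entries on $S_K$ could arise.
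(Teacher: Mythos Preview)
Your proof is correct. Part (i) is essentially identical to the paper's own argument (coordinate swap collapsing to the linear term). For Part (ii), however, you take a genuinely different route. The paper argues directly by averaging: assuming $d_i=d_j$ but $\hat\beta_i\neq\hat\beta_j$, replace both coordinates by their mean $(\hat\beta_i+\hat\beta_j)/2$; the linear term is unchanged (since $d_i=d_j$), the pair $\{i,j\}$ term is unchanged, and each cross term $\log(1+e^{\mu+\beta_i+\beta_k})+\log(1+e^{\mu+\beta_j+\beta_k})$ strictly decreases by strict convexity of $x\mapsto\log(1+e^x)$, yielding a contradiction. The role of the hypothesis $s=\sum_{k\le K}s_k$ there is only to guarantee that averaging does not violate the $\ell_0$ budget. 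Your argument instead (a) localizes the support to $T_K$ via a swap, (b) invokes global strict convexity of $\ell_n$ on the resulting convex feasible set (full-rank exponential family, using $T_K^c\neq\emptyset$), and (c) deduces the tied-coordinate equality from uniqueness plus permutation symmetry. The paper's approach is shorter and more elementary, needing only scalar convexity rather than a full-rank design argument; your approach is more structural and yields as a byproduct that the constrained MLE is unique when $s$ is compatible with the degree partition, which also cleanly explains why intermediate values of $s$ are excluded.
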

The proof of Lemma \ref{lemma:1} and other proofs for Section \ref{section:unknown} can be found in Appendix \ref{sec:proof unknown} in the supplementary material. Lemma \ref{lemma:1} implies that $\hat\vbeta (s)$ as the constrained MLE of \eqref{eq:ple} has the same order as the degree sequence. That is, for the constrained optimization in \eqref{eq:ple} with a penalty parameter $s$, we just assign nonzero $\beta$ to those nodes whose degrees are among the largest $s$ nodes. More precisely,  if $s=\sum_{k=1}^K s_k$ for some $K \le m-1$, then $\hat\beta_i (s)\ge 0$ for $i\in \bigcup_{k=1}^K S_k$ and $\hat\beta_i (s)=0$ for $i \in \bigcup_{K < k \le  m} S_k$. In other words, we can find \textit{a priori} the support of $\hat\vbeta (s)$ from the degree sequence and can compute $\hat\vbeta (s)$ by solving the following optimization problem \textit{without the $\ell_{0}$-penalization}:
\[
( \hat\mu (s), \hat \vbeta(s)  )= \argmin_{\mu \in \mathbb{R}, \vbeta \in \mathbb{R}_{+}^n}~ \ell_{n}(\mu,\vbeta)\quad \text{subject to} \   \beta_{i} = 0 \ \text{for} \ i \in \bigcup_{K < k \le  m} S_k.
\]
This way, we can efficiently compute a solution path of $( \hat\mu (s), \hat \vbeta(s)  )$ as a function of $s \in \{ s_{1},s_{1}+s_{2},\dots,\sum_{k=1}^{m-1}s_{k} \}$ without solving a computationally expensive combinatorial problem. 
We note that  the set $\{1, \dots, n-1\} \setminus \{ s_{1},s_{1}+s_{2},\dots,\sum_{k=1}^{m-1}s_{k} \}$ is excluded from consideration for the tuning parameter $s$, because otherwise the solution to the constrained optimization will not be unique.  

The preceding lemma shows that there will be a sequence of supports 
\begin{equation}
S_{1}, S_{1} \cup S_{2}, \dots,\bigcup_{k=1}^{m-1} S_{k},
\label{eq:support}
\end{equation}
for $\hat{\vbeta}(s)$ with $s \in \{ s_{1},s_{1}+s_{2},\dots,\sum_{k=1}^{m-1}s_{k} \}$. 
Next, we show that as long as the smallest nonzero element of $\vbeta_0$ is above a certain threshold, with high probability the true support $S(\vbeta_0)$ is included in the support sequence (\ref{eq:support}) constructed from the degree sequence $\vd$.

\begin{lemma} 
\label{lem: beta min}
Let $S=S(\vbeta_0)$, and let $\tau \in (0,1)$ be given. Pick any $i \in S$ and $j \in S^{c}$. Suppose that 
\begin{equation}
\beta_{0i} > \log \left (1+c_{n,\tau}(1+e^{\mu^{-}})(1+e^{2\overline{\beta}+\mu^{+}}) \right),
\label{eq: betamin}
\end{equation}
where $c_{n,\tau} = \sqrt{(2/(n-2)) \log (2/\tau)}, \ \overline{\beta} = \max_{1 \le k \le n} \beta_{0k}, \ \mu^{+} = \max \{ \mu_0, 0 \}$, and $\mu^{-} = \max \{ -\mu_0,0 \}$. 
Then $d_{i} > d_{j}$ with probability at least $1-\tau$.
\end{lemma}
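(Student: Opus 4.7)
The plan is to write $d_i - d_j$ as a sum of independent, bounded perturbations around its expectation, apply Hoeffding's inequality, and then check that the $\beta$-min condition forces $E[d_i - d_j]$ to dominate the resulting fluctuation.

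First, because $A_{ij} = A_{ji}$ cancels, one has
$$d_i - d_j = \sum_{k \ne i,j}(A_{ik} - A_{jk}),$$
a sum of $n-2$ independent random variables, each lying in $[-1,1]$ (independence holds because distinct $k$'s index disjoint pairs). Hoeffding's inequality then gives
$$P\Bigl(|d_i - d_j - E[d_i - d_j]| \ge \sqrt{2(n-2)\log(2/\tau)}\,\Bigr) \le \tau,$$
and $\sqrt{2(n-2)\log(2/\tau)} = (n-2)\,c_{n,\tau}$ for $c_{n,\tau}$ as defined in the statement.

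Next, I would bound $E[d_i - d_j] = \sum_{k \ne i,j}(p_{ik} - p_{jk})$ from below term by term. Using $\beta_{0j} = 0$, a direct computation yields
$$p_{ik} - p_{jk} = \frac{e^{\beta_{0i}} - 1}{(1 + e^{-\mu_0 - \beta_{0k}})\,(1 + e^{\mu_0 + \beta_{0i} + \beta_{0k}})}.$$
Using $\beta_{0k} \ge 0$ and $-\mu_0 \le \mu^-$ bounds the first denominator factor above by $1 + e^{\mu^-}$, while $\mu_0 \le \mu^+$ and $\beta_{0i},\beta_{0k} \le \overline{\beta}$ bound the second by $1 + e^{\mu^+ + 2\overline{\beta}}$. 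Summing over the $n-2$ indices $k \ne i,j$ then gives
$$E[d_i - d_j] \ge \frac{(n-2)(e^{\beta_{0i}} - 1)}{(1 + e^{\mu^-})(1 + e^{\mu^+ + 2\overline{\beta}})}.$$

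Finally, the $\beta$-min condition in the statement is algebraically equivalent to requiring this lower bound to strictly exceed $(n-2)\,c_{n,\tau}$. Combining with the Hoeffding tail estimate yields $d_i - d_j \ge E[d_i - d_j] - (n-2)\,c_{n,\tau} > 0$ on an event of probability at least $1-\tau$, which is the desired conclusion. I do not expect any conceptual obstacle; the only subtle step is choosing the representation of $p_{ik} - p_{jk}$ whose two denominator factors match exactly the two factors $(1+e^{\mu^-})$ and $(1+e^{\mu^+ + 2\overline{\beta}})$ appearing in the hypothesis. Writing the difference as $(e^{\beta_{0i}}-1)$ divided by $(1+e^{-\mu_0-\beta_{0k}})(1+e^{\mu_0+\beta_{0i}+\beta_{0k}})$ achieves this cleanly and makes the implication essentially tautological.
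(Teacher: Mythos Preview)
Your proof is correct and follows essentially the same route as the paper's: decompose $d_i-d_j=\sum_{k\ne i,j}(A_{ik}-A_{jk})$, control the fluctuation by Hoeffding, and lower bound each $p_{ik}-p_{jk}$ via the same factorization to obtain exactly the bound $(n-2)(e^{\beta_{0i}}-1)/[(1+e^{\mu^-})(1+e^{\mu^++2\overline\beta})]$. The only cosmetic difference is that the paper applies one-sided Hoeffding separately to $\sum_k(A_{ik}-E[A_{ik}])$ and $\sum_k(A_{jk}-E[A_{jk}])$ with failure probability $\tau/2$ each, whereas you apply two-sided Hoeffding directly to the combined sum; both yield the identical threshold $(n-2)c_{n,\tau}=\sqrt{2(n-2)\log(2/\tau)}$.
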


By the union bound, Lemma \ref{lem: beta min} immediately yields the following corollary.

\begin{cor}[$\beta$-min condition]\label{cor:betamin}
Pick any $\tau \in (0,1)$. Suppose that the following \textit{$\beta$-min condition} is satisfied:
\begin{equation}\min_{i \in S} \beta_{0i} > \log \left (1+c_{n,\tau/n(n-1)}(1+e^{\mu^{-}})(1+e^{2\overline{\beta}+\mu^{+}}) \right).
\label{eq:betaminUnion}
\end{equation}
Then we have $\min_{i \in S} d_{i} > \max_{j \in S^{c}} d_{j}$ with probability at least $1-\tau$. 
\end{cor}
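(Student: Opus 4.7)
The plan is to deduce Corollary 1 as a straightforward consequence of Lemma 2 via a union bound, taking advantage of the fact that the confidence level $c_{n,\tau/n(n-1)}$ appearing in the $\beta$-min condition is precisely calibrated so that failure probabilities aggregate to at most $\tau$.

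First, I would rewrite the target event as an intersection of pairwise events: $\{\min_{i \in S} d_i > \max_{j \in S^c} d_j\}$ is the same as $\bigcap_{i \in S, \, j \in S^c} \{d_i > d_j\}$. Observing that $\min_{i \in S} \beta_{0i}$ satisfies the threshold in the hypothesis, \emph{every} $i \in S$ automatically satisfies the inequality \eqref{eq: betamin} from Lemma 2 with $\tau$ replaced by $\tau/(n(n-1))$, since the right-hand side of \eqref{eq: betamin} is monotone increasing in the argument of $\log$ and $\beta_{0i} \geq \min_{i \in S} \beta_{0i}$. Consequently, Lemma 2 applied to each individual pair $(i,j) \in S \times S^c$ yields
\[
P(d_i \le d_j) \le \frac{\tau}{n(n-1)}.
\]

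I would then apply the union bound over all $|S| \cdot |S^c| = s_0(n-s_0)$ such pairs. Since $s_0(n-s_0) \le n(n-1)$ for any $1 \le s_0 \le n-1$, we obtain
\[
P\bigl(\min_{i \in S} d_i \le \max_{j \in S^c} d_j\bigr) \le \sum_{i \in S, \, j \in S^c} P(d_i \le d_j) \le s_0(n-s_0) \cdot \frac{\tau}{n(n-1)} \le \tau,
\]
which gives the desired lower bound $1-\tau$ on the probability of strict separation of the degrees.

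There is essentially no technical obstacle here: this is a bookkeeping argument, and the only point worth flagging is that the choice $\tau/(n(n-1))$ in the $\beta$-min condition is tailored to make the union bound close without further slack. One minor point to state carefully is the monotonicity observation that $\min_{i \in S} \beta_{0i}$ exceeding the threshold forces every $\beta_{0i}$ for $i \in S$ to exceed it as well, so that Lemma 2 can be invoked uniformly across pairs. Beyond this, the argument is self-contained and requires no new probabilistic input.
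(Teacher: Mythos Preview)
Your proposal is correct and matches the paper's own approach: the paper simply states that the corollary follows from Lemma~\ref{lem: beta min} by the union bound, and your argument spells out exactly that union-bound computation over the $|S|\cdot|S^{c}|=s_0(n-s_0)\le n(n-1)$ pairs.
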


Corollary \ref{cor:betamin} specifies the minimum magnitude of the nonzero $\beta$'s for the S$\beta$M to include the true support $S(\vbeta_0)$ in the support sequence (\ref{eq:support}). For this reason, we call the condition in (\ref{eq:betaminUnion}) the $\beta$-min condition. Such $\beta$-min conditions are common in the literature on high-dimensional statistics to guarantee support recovery; see, e.g., \cite{meinshausen2006,zhao2006,wainwright2009,buhlmann2013}. With this $\beta$-min condition, if we choose $s=|S(\vbeta_0)|$, then we can identify the support of $\vbeta_0$ by solving the optimization problem in \eqref{eq:ple} with a probability close to one. The issue of determining the sparsity level $s$ will be discussed in Section \ref{sec:BIC}. 
Note that $c_{n,\tau/n(n-1)} \sim \sqrt{(\log n)/n}$, and that the right hand side of (\ref{eq:betaminUnion}) is of constant order as long as $e^{2\overline{\beta} +|\mu_0|} = O(\sqrt{n/\log n})$.

Finally, we evaluate the prediction risk for the estimator $(\hat{\mu}(s),\hat\vbeta(s))$ for a given sparsity level $s$. 
Recall that the true value of $(\mu,\vbeta)$ is denoted by $(\mu_{0},\vbeta_{0})$ with $s_{0} = \| \vbeta_{0} \|_{0}$. In general $s$ and $s_{0}$ may differ. 
Let $\mathcal{R} (\mu,\vbeta)$ be the risk of the parameter value $(\mu,\vbeta)$ which is defined by the expected normalized negative log-likelihood, i.e., 
\[
\mathcal{R} (\mu,\vbeta) = E[D_{+}^{-1}\ell_{n}(\mu,\vbeta)],
\]
where we think of $D_{+} = E[d_{+}]$ as the effective sample size. Normalization by $D_{+}$ is natural since the risk at the true parameter $\mathcal{R}(\mu_{0},\vbeta_{0})$ is of constant order up to logarithmic factors under sparse network scenarios; see the discussion after Theorem \ref{thm: risk bound} (recall that in the linear regression case with squared loss function, the risk at the true parameter is the error variance, which is constant).  
For a given sparsity level $s$, consider the $\ell_{0}$-constrained estimator $(\hat{\mu}(s),\hat{\vbeta}(s))$ as in (\ref{eq:ple}):
\[
(\hat{\mu}(s),\hat{\vbeta}(s)) = \argmin \{ \ell_n (\mu,\vbeta) : (\mu,\vbeta) \in \Theta_{s}  \},
\]
where $\Theta_{s} = \{(\mu,\vbeta)  \in \mR \times \mR_{+}^{n}: | \mu | \le M_1, \vbeta \in [0,M_2]^{n}, \| \vbeta \|_{0} \le s \}$ and $M_1,M_2$ are given positive deterministic numbers. We assume that $M_1$ and $M_2$ are  sufficiently large and may increase with $n$, but suppress the dependence of the parameter space $\Theta_{s}$ on $M_1$ and $M_2$ (the risk bound in Theorem \ref{thm: risk bound} is nonasymptotic so specifying how fast $M_1$ and $M_2$ can grow with $n$ is not necessary;  effectively, however, $M_1$ and $M_2$ should be constrained so that the risk bound is tending to zero).  In addition, both $s_{0}$ and $s$ can depend on $n$. Following the empirical risk minimization literature \citep[see, e.g.,][]{GreenshteinRitov2004,Koltchinskii2011}, we will evaluate the performance of the estimator $(\hat{\mu}(s),\hat{\vbeta}(s))$ by the (local) excess risk relative to the parameter space $\Theta_{s}$
\[
\mathcal{E}_{s} = \mathcal{R}(\hat{\mu}(s),\hat{\vbeta}(s)) - \inf_{(\mu,\vbeta) \in \Theta_{s}} \mathcal{R}(\mu,\vbeta). 
\]
We note that the (global) excess risk relative to the true parameter $(\mu_{0},\vbeta_{0})$ can also be bounded by the decomposition 
\[
\mathcal{R}(\hat{\mu}(s),\hat{\vbeta}(s)) -  \mathcal{R}(\mu_{0},\vbeta_{0}) =  \left [ \inf_{(\mu,\vbeta) \in \Theta_{s}} \mathcal{R}(\mu,\vbeta) - \mathcal{R}(\mu_{0},\vbeta_{0}) \right ] +  \mathcal{E}_{s},
\]
where the first term on the right hand side accounts for the deterministic bias. 
The following theorem derives high-probability upper bounds on the excess risk $\mathcal{E}_{s}$. 
\begin{theorem}[Excess risk bound]
\label{thm: risk bound}
For any given $\tau \in (0,1)$, we have 
\begin{equation}
\begin{split}
\mathcal{E}_{s} &\le  \frac{2}{D_{+}} \Bigg [ M_1 \left \{ \sqrt{2\Var (d_{+}) \log (4/\tau)}+ (\log (4/\tau))/3 \right \} \\
&\qquad \qquad+ M_2 s \left \{  \sqrt{2\max_{1 \le i \le n} \Var (d_{i}) \log (4n/\tau)}+  (\log (4n/\tau))/3  \right \}  \Bigg] 
\end{split}
\label{eq:generic bound}
\end{equation}
with probability at least $1-\tau$. In particular, if $\mu_{0} =  -\gamma \log n +O(1)$ and $\beta_{0i} = \alpha \log n +O(1)$ uniformly in $i \in S(\vbeta_{0})$ for some $\gamma \in [0,2)$ and $\alpha \in [0,1)$ with $0 \le \gamma - \alpha < 1$, and $s_{0} = o(n^{1-\alpha})$, then we have $D_{+} \sim n^{2-\gamma}$ and 
\begin{equation}
\mathcal{E}_{s} = O_{P} \left(\frac{M_{1}}{n^{1-\gamma/2}} + \frac{M_{2}s \sqrt{\log n}}{n^{3/2-(\gamma+\alpha)/2}} \right ).
\label{eq:risk bound}
\end{equation}
\end{theorem}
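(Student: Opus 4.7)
The approach is a standard empirical risk minimization (ERM) argument combined with Bernstein-type concentration. Write $\hat{\mathcal{R}}(\mu,\vbeta) = D_+^{-1}\ell_n(\mu,\vbeta)$ for the empirical risk and let $(\mu^*, \vbeta^*) \in \argmin_{(\mu,\vbeta) \in \Theta_s} \mathcal{R}(\mu,\vbeta)$. Since $(\hat{\mu}(s),\hat{\vbeta}(s))$ minimizes $\hat{\mathcal{R}}$ over $\Theta_s$, the basic inequality $\hat{\mathcal{R}}(\hat{\mu}(s), \hat{\vbeta}(s)) \le \hat{\mathcal{R}}(\mu^*, \vbeta^*)$ together with adding and subtracting $\mathcal{R}$ and $\hat{\mathcal{R}}$ terms yields the textbook bound
\[
\mathcal{E}_s \;\le\; 2\sup_{(\mu,\vbeta) \in \Theta_s} \bigl|\hat{\mathcal{R}}(\mu,\vbeta) - \mathcal{R}(\mu,\vbeta)\bigr|.
\]
The crucial observation, which avoids a union bound over the $\binom{n}{s}$ possible supports, is that $\ell_n$ is linear in $(\mu,\vbeta)$ in its random part: the deterministic term $\sum_{i<j}\log(1+e^{\mu+\beta_i+\beta_j})$ cancels under centering, leaving
\[
\ell_n(\mu,\vbeta) - E[\ell_n(\mu,\vbeta)] \;=\; -(d_+ - E[d_+])\mu - \sum_{i=1}^n (d_i - E[d_i])\beta_i.
\]
Taking absolute values and using $|\mu|\le M_1$, $0 \le \beta_i \le M_2$, $\|\vbeta\|_0 \le s$, the worst-case support collapses to the $s$ indices with the largest $|d_i - E[d_i]|$, which is trivially bounded by $s \cdot \max_i |d_i - E[d_i]|$. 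This gives
\[
D_+ \sup_{\Theta_s} \bigl|\hat{\mathcal{R}} - \mathcal{R}\bigr| \;\le\; M_1 \bigl|d_+ - E[d_+]\bigr| + M_2\, s\, \max_{1\le i \le n} \bigl|d_i - E[d_i]\bigr|.
\]

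Next I would apply Bernstein's inequality to the independent bounded Bernoulli sums $d_+ = \sum_{i<j}A_{ij}$ and, separately, to each $d_i = \sum_{j\ne i} A_{ij}$. Controlling $d_+$ at level $\tau/2$ and controlling the maximum over $i$ by a union bound at level $\tau/(2n)$ produces exactly the generic bound \eqref{eq:generic bound} after dividing by $D_+$.

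For the asymptotic rate \eqref{eq:risk bound}, under the scaling $\mu_0 = -\gamma\log n + O(1)$, $\beta_{0i} = \alpha\log n + O(1)$ for $i \in S(\vbeta_0)$, and $s_0 = o(n^{1-\alpha})$, the same expected-degree expansion displayed just after Theorem \ref{thm:asymnorm} shows $D_+ \sim n^{2-\gamma}$. Since each summand is Bernoulli, $\Var(d_+) \le D_+ \sim n^{2-\gamma}$ and $\max_{1 \le i \le n}\Var(d_i) \le \max_{1 \le i \le n}E[d_i] = O(n^{1-(\gamma-\alpha)})$, the latter being attained on $S$. Substituting these orders into \eqref{eq:generic bound}, the contribution of the linear remainder term from Bernstein is of smaller order than the square-root term under the stated sparsity regime, so one arrives at $\mathcal{E}_s = O_P\bigl(M_1 n^{-1+\gamma/2} + M_2 s \sqrt{\log n}\,n^{-3/2+(\gamma+\alpha)/2}\bigr)$.

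No single step is genuinely hard; the only real trick is the linearity observation that reduces the supremum over the non-convex set $\{\|\vbeta\|_0 \le s\}$ to bounding $\max_i|d_i - E[d_i]|$, so that only an $n$-fold (not $\binom{n}{s}$-fold) union bound is needed. The remainder is careful bookkeeping with Bernstein's constants and the variance calculations.
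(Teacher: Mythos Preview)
Your proposal is correct and follows essentially the same route as the paper's proof: the basic ERM inequality $\mathcal{E}_s \le 2\sup_{\Theta_s}|D_+^{-1}\ell_n - \mathcal{R}|$, the linearity observation that reduces the supremum to $M_1|d_+ - E[d_+]| + M_2 s \max_i|d_i - E[d_i]|$, Bernstein's inequality with an $n$-fold union bound, and finally the variance estimates $\Var(d_+) \sim n^{2-\gamma}$ and $\max_i \Var(d_i) \sim n^{1-(\gamma-\alpha)}$ borrowed from the proof of Theorem~\ref{thm:asymnorm}. The only cosmetic difference is that the paper's proof has a harmless typo swapping $M_1$ and $M_2$ in the intermediate display, whereas your version has them the right way around.
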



In the latter setting of Theorem \ref{thm: risk bound}, it is not difficult to see that $E[\ell_{n}(\mu_{0},\vbeta_{0})] \sim n^{2-\gamma}$ up to logarithmic factors so that the risk at $(\mu_{0},\vbeta_{0})$ normalized by $D_{+}$ is of constant order $\mathcal{R}(\mu_{0},\vbeta_{0}) \sim 1$ up to logarithmic factors. In addition, if e.g. $M_{1} \sim \log n$ and $M_{2} \sim \log n$, then the bound (\ref{eq:risk bound}) becomes
\[
\mathcal{E}_{s} = O_{P} \left(\frac{\log n}{n^{1-\gamma/2}} + \frac{s (\log n)^{3/2}}{n^{3/2-(\gamma+\alpha)/2}} \right ).
\]
Hence, the estimator $(\hat{\mu}(s),\hat{\vbeta}(s))$ is \textit{persistent} in the sense of \cite{GreenshteinRitov2004}, i.e., $\mathcal{E}_{s} \stackrel{P}{\to} 0$ ({as $n \to \infty$}), as long as 
\begin{equation}
s=o(n^{3/2 - (\gamma+\alpha)/2}/(\log n)^{3/2}),
\label{eq: persistency}
\end{equation}
and  provided that the true sparsity level satisfies $s_{0} = o(n^{1-\alpha})$. Thus, the persistency is more difficult to achieve when $\gamma$ or $\alpha$ is large, i.e., the generated networks tend to be globally sparse or locally dense. The reason why the persistency is more difficult when the networks are locally dense is that while the effective sample size $D_+$ does not depend on the local density (i.e., $\alpha$), the variance of each node degree increases with the local density. Condition (\ref{eq: persistency}) is automatically satisfied if $\gamma + \alpha <1$ since $s$ is at most $n-1$. 
In addition,  the bound can achieve the near parametric rate $(\log n)/n^{1-\gamma/2}$ with respect to the effective sample size $D_{+} \sim n^{2-\gamma}$ as long as $s = o(n^{(1-\alpha)/2}/\sqrt{\log n})$.

\section{Simulation Study}\label{sec: simulation}

\subsection{Selection of sparsity level}
\label{sec:BIC}
In practice, we have to choose the sparsity level $s$ for the $\ell_{0}$-penalized MLE to work. In this simulation study, we will examine the following version of BIC
\begin{equation}
\textsf{BIC}(s) = 2 \ell_n(\hat{\mu}(s),\hat{\vbeta}(s))+  s\log\left(n(n-1)/2\right). 
\label{eq: BIC}
\end{equation}
Recall that we have defined $\ell_n (\mu,\vbeta)$ by the negative log-likelihood. Using the notation in Section \ref{section:unknown}, 
we choose $s$ that minimizes the BIC:
\[
\hat{s} = \argmin \left \{ \textsf{BIC}(s) : s \in \left \{ s_{1},s_{1}+s_{2},\dots,\sum_{k=1}^{\tilde{m}}s_{k} \right \} \right \},
\]
{where $1\le \tilde{m}<m$ is used to constrain the maximum size of the models to be inspected. The simplest choice of $\tilde{m}$ is $m-1$, where $m$ is the number of distinct degrees, corresponding to the $\beta$-model. In practice however, we recommend using an $\tilde{m}$ such that $\sum_{k=1}^{\tilde{m}}s_{k}$ is a loose upper bound of the true model size $s_0$. We note that similar strategies restricting the maximum sizes of candidate models are widely used in choosing high-dimensional models; see, for example, \cite{chen2008}, \cite{wang2009}, and \cite{fan2013}. Otherwise, the value of the corresponding information criterion for a model, especially an over-fitted model, may not be well defined.} 
The final estimator is then given by $(\hat\mu (\hat s)_,\hat\vbeta (\hat s))$. We shall study the performance of the BIC via numerical simulations.

The BIC defined in (\ref{eq: BIC}) uses $n(n-1)/2$ as the sample size. 
In view of our previous discussion on the effective sample size, it would be natural to use $D_{+}$ or its unbiased estimate $d_{+}$ in place of $n(n-1)/2$ by defining a different BIC: 
\begin{equation}
\textsf{BIC}^{*}(s) =2 \ell_n(\hat{\mu}(s),\hat{\vbeta}(s))+  s\log (d_{+}). 
\label{eq: BIC2}
\end{equation}
Preliminary simulation results suggest that, however, the performance of the BIC in (\ref{eq: BIC2}) is similar or slightly worse than the one in (\ref{eq: BIC}) in most cases in terms of model selection and parameter estimation. Hence we only report the simulation results using (\ref{eq: BIC}).

Let us discuss selection consistency of the BIC defined in (\ref{eq: BIC}), i.e., $P(S(\hat{\vbeta}(\hat{s})) = S(\vbeta_0)) \to 1$ {as $n \to \infty$}. 
For given $S \subset \{ 1,\dots, n \}$ with $|S| \le n-1$, let $(\hat{\mu}^{S},\hat{\vbeta}^{S})$ denote the support constrained MLE
$(\hat{\mu}^{S},\hat{\bm{\beta}}^{S}) = \argmin \{ \ell_{n}(\mu,\bm{\beta}) : \mu \in \mR, \bm{\beta} \in \mR_{+}^{n}, \supp (\bm{\beta}) = S \}$. The selection consistency of the BIC in (\ref{eq: BIC}) follows if $P(\min_{S \neq S(\vbeta_{0})} \textsf{BIC}_{S} > \textsf{BIC}_{S(\vbeta_{0})}) \to 1$ {as $n \to \infty$} where $\textsf{BIC}_{S} =2 \ell_n(\hat{\mu}^{S},\hat{\vbeta}^{S})+  |S|\log\left(n(n-1)/2\right)$. Several papers have studied consistency of BIC and its modification for variable selection in linear and generalized linear regression models with increasing numbers of covariates; see, for example, \cite{chen2008, wang2009, fan2013}. 
Importantly, however, none of these results can be adapted to our case (at least directly) since, in addition to the fact that the number of possible models is extremely large, the parameter space for $\vbeta$ is restricted to the positive orthant $\mathbb{R}_{+}^{n}$, in the overfitting case (i.e., $S \supset S(\vbeta_{0})$), and the asymptotic behavior of $\hat{\beta}_{i}^{S}$ for $i \in S \setminus S(\vbeta_{0})$ is nonregular as the corresponding true parameter lies on the boundary of the parameter space \citep{andrews1999}. The fact that the true parameter is on the boundary of the parameter space prevents us from expanding $\hat{\vbeta}^{S}$ into a linear term, which is a crucial step in proving the selection consistency of BIC in \cite{fan2013}. 
Developing formal asymptotic theory for BIC under such nonregular cases (and with diverging number of parameters) is beyond the scope of the present paper and left for future research. In any case, the simulation results below demonstrate good performance of the BIC in terms of model selection. Additional simulations using the information criteria in \cite{chen2008,  fan2013} show similar performance to the BIC we used.

\subsection{Simulation results}

In this simulation study, we consider the following configurations of $(\mu_{0},\vbeta_{0})$:
\begin{enumerate}
\item[(i)] $\mu_{0} = - 1.5$, and $\beta_{0i} = 1.5$, $\sqrt{\log n}$, or $\log n$ for $i \in S(\vbeta_{0})$; 
\item[(ii)] $\mu_{0} = -\sqrt{\log n}$, and $\beta_{0i} = 1.5$, $\sqrt{\log n}$, or $\log n$ for $i \in S(\vbeta_{0})$; 
\item[(iii)] $\mu_{0} = -\log n$, and $\beta_{0i} = 1.5$, $\sqrt{\log n}$, or $\log n$ for $i \in S(\vbeta_{0})$;
\end{enumerate}
where $n=50$, $100$, $200$ or $400$. The sparsity level of $\vbeta_{0}$ is either $s_{0} = |S(\vbeta_{0})| = 2, \lfloor \sqrt{n/2} \rfloor,  \left \lfloor \sqrt{n} \right \rfloor$, or $ \left \lfloor 2\sqrt{n} \right \rfloor$, where $\left \lfloor a \right \rfloor$ denotes the largest integer smaller than $a$.  Since the indices of the nonzero elements of $\vbeta_0$ do not matter  for our estimation procedure, we simply choose the first $s_{0}$ elements of $\vbeta_{0}$ to be nonzero. The number of Monte Carlo repetitions is  $1000$ for each case of simulation. 
To speed up our estimation procedure, in this simulation study, {we restricted the maximum number of sparsity levels $s$ examined to be $\max \{ 40,\lfloor 4 n \rfloor \}$. 
Finally, we used the programming language R \citep{rcore2020} to conduct simulations and real data analysis. To compute support constrained MLEs, we used the \textsf{nlminb} function in R.}
 
The configurations in (i)-(iii) above are chosen to reflect various degrees of sparsity for the overall network globally and for individual nodes locally. Recall that an induced subgraph of a graph is another graph formed from a subset of the vertices of the graph and all of the edges connecting pairs of vertices in that subset. If $\mu_{0} = -\log n$, then the subgraph induced by those nodes with zero $\beta$ parameters will form a sparse Erd\H{o}s-R\'enyi graph with $D_+ \sim n-s_0$. If  $\mu_{0} = - 1.5$, then this subgraph is almost dense in that  $D_+ \sim (n-s_0)^2/\log (n-s_0)$. If $\mu_{0} = -\sqrt{\log n}$, then the induced subgraph lies somewhere between these two cases. The specification  $\beta_{0i} =\log n$ is guided by the reparameterization in Theorem \ref{thm:asymnorm}. By specifying  $\beta_{0i} = 1.5$ or $\sqrt{\log n}$, we want to consider those local parameters that are much smaller than $\log n$.

\begin{figure}[p]
\begin{minipage}{\textwidth}
\centering

\begin{subfigure}[b]{0.3\textwidth}
     \caption[ ]
        {{\footnotesize  $\beta_{0i} = 1.5, \mu_{0} = - 1.5$}}
 	\centering
	\includegraphics[width=1.1\textwidth, height = \textwidth]{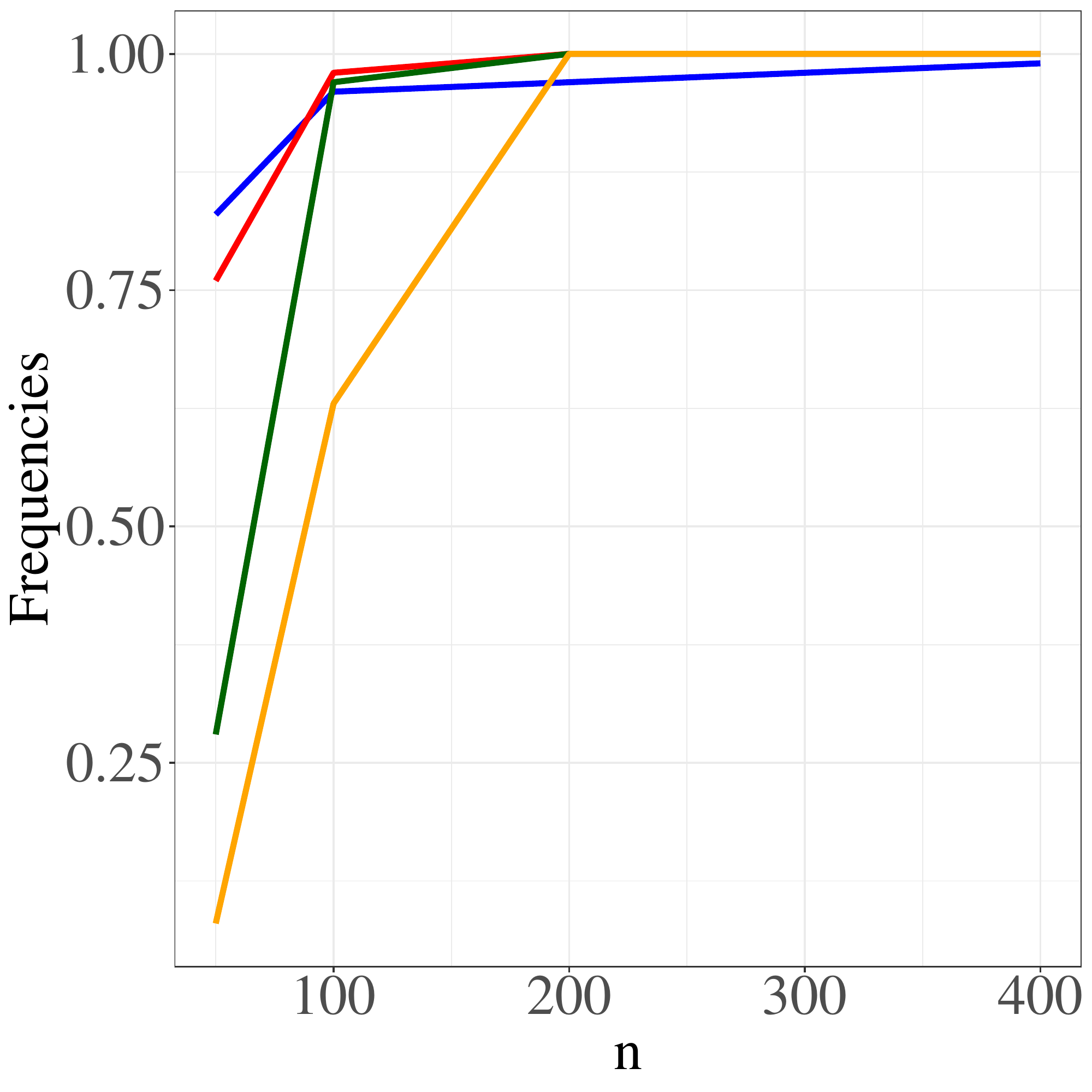}
\end{subfigure}
 \hfill \hfill \hfill \hspace{-5mm}
\begin{subfigure}[b]{0.3\textwidth}
   \caption[]
       {{\footnotesize $\beta_{0i} = \sqrt{\log n}, \mu_{0} = - 1.5$}}
 	\centering
	\includegraphics[width=\textwidth]{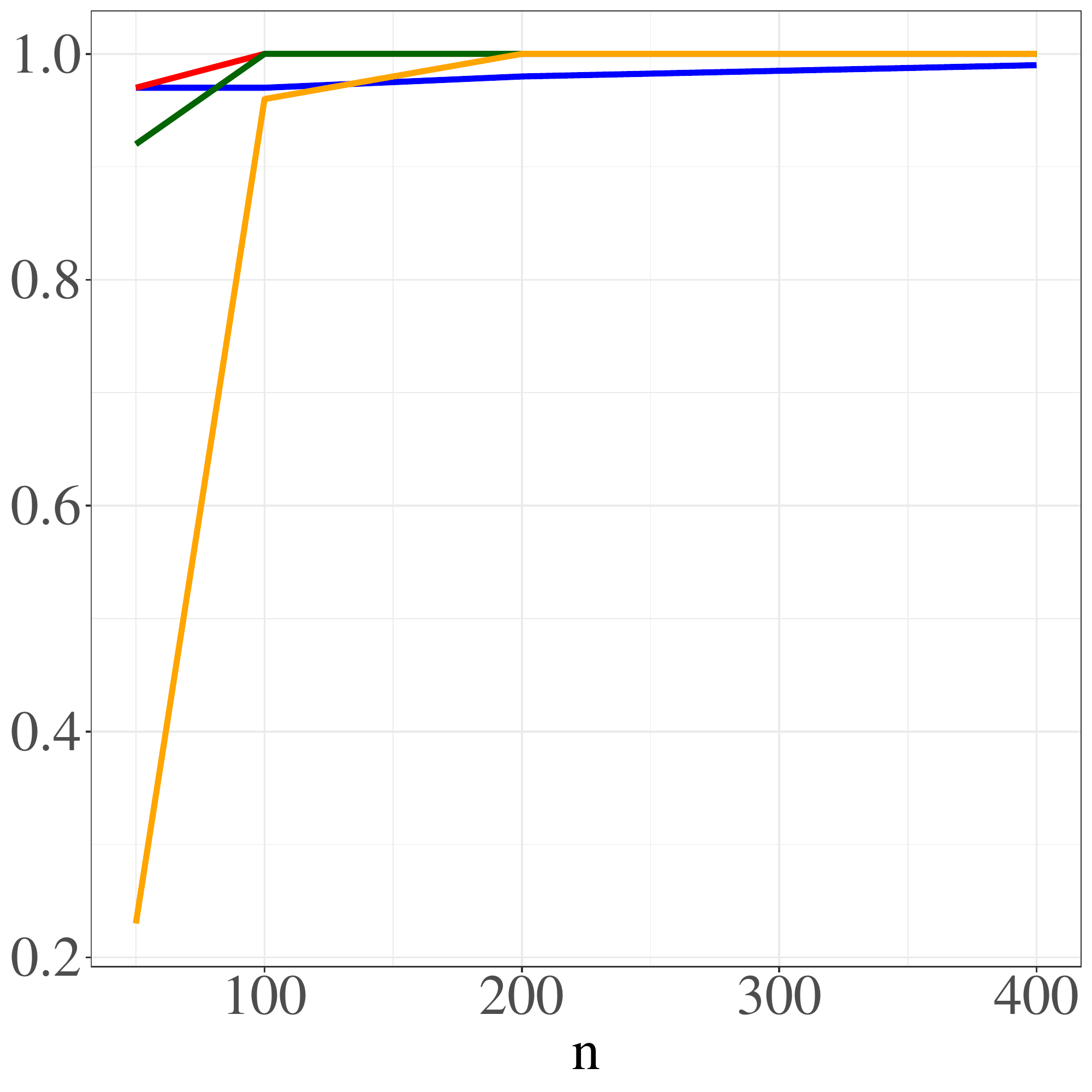}
\end{subfigure}
\hfill
\begin{subfigure}[b]{0.3\textwidth}
   \caption[]
      {{\footnotesize $\beta_{0i} = \log n, \mu_{0} = - 1.5$}}
 	\centering
	\includegraphics[width=\textwidth]{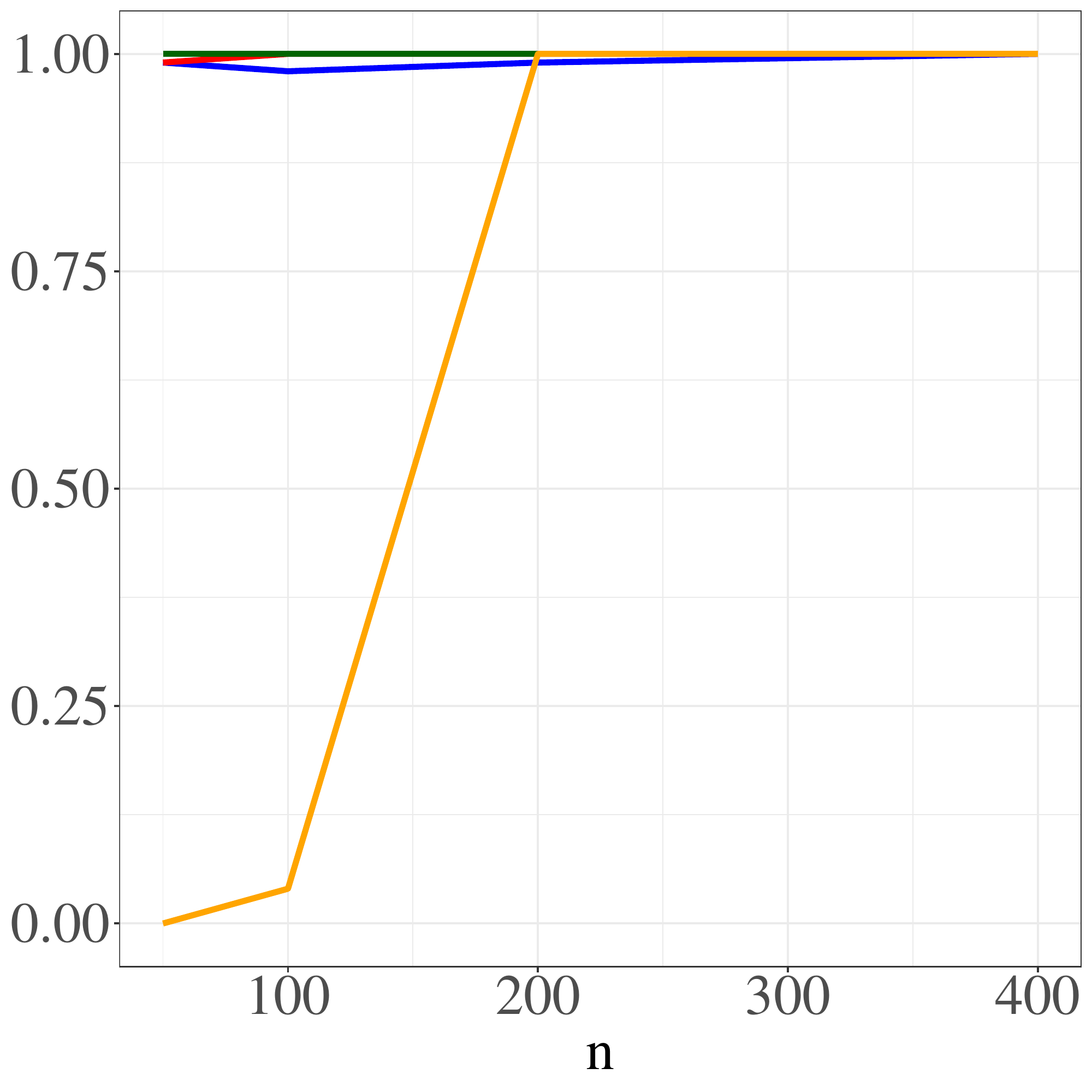}
\end{subfigure}

\vskip\baselineskip
\begin{subfigure}[b]{0.3\textwidth}
     \caption[]
       {{\footnotesize $\beta_{0i} = 1.5, \mu_{0} = - \sqrt{\log n}$}}
 	\centering
	\includegraphics[width=1.1\textwidth, height = \textwidth]{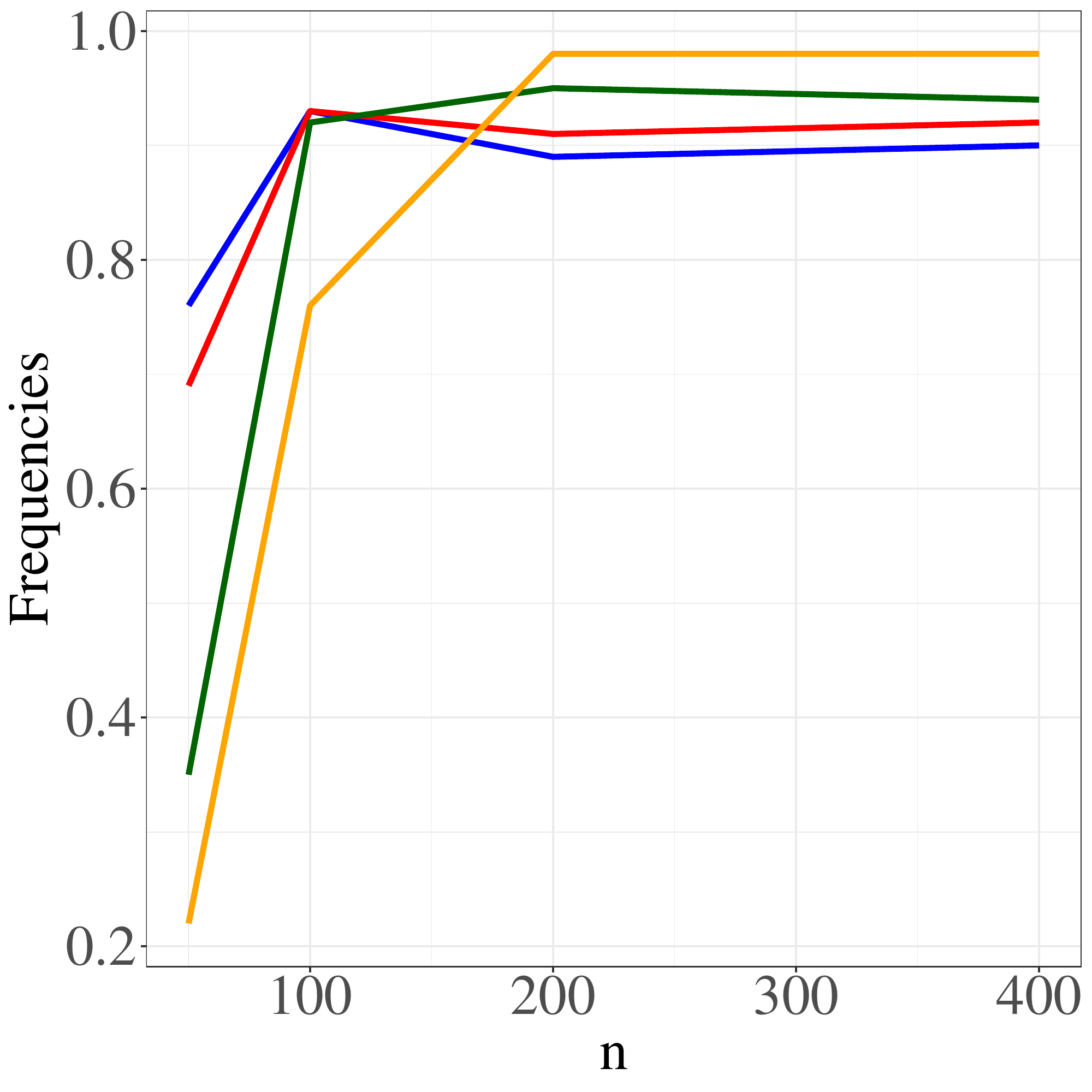}
\end{subfigure}
 \hfill \hfill \hfill \hspace{-5mm}
\begin{subfigure}[b]{0.3\textwidth}
   \caption[]
       {{\footnotesize $\beta_{0i} = \sqrt{\log n}, \mu_{0} = - \sqrt{\log n}$}}
 	\centering
	\includegraphics[width=\textwidth]{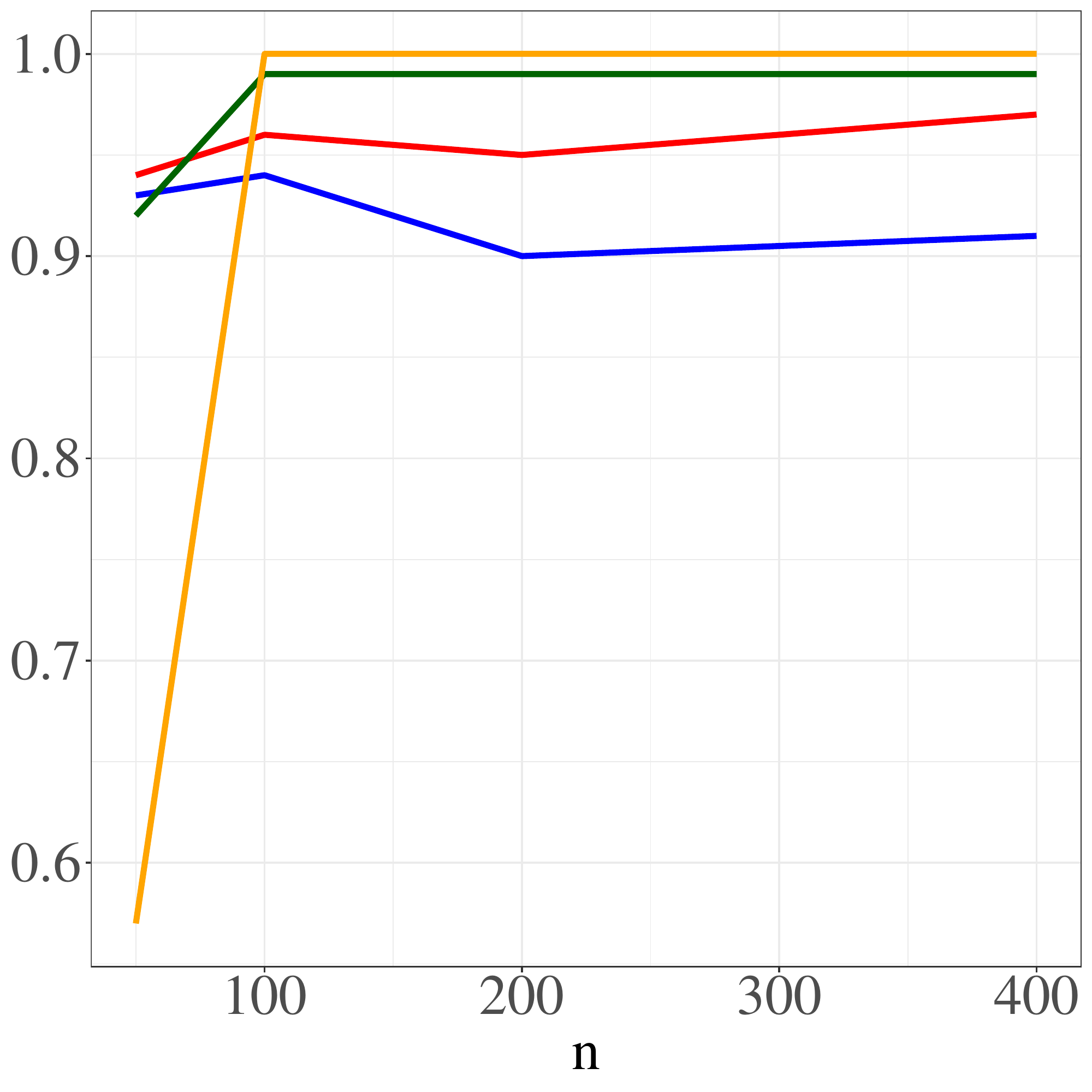}
\end{subfigure}
\hfill
\begin{subfigure}[b]{0.3\textwidth}
    \caption[]
      {{\footnotesize $\beta_{0i} = \log n, \mu_{0} = - \sqrt{\log n}$}}
 	\centering
	\includegraphics[width=\textwidth]{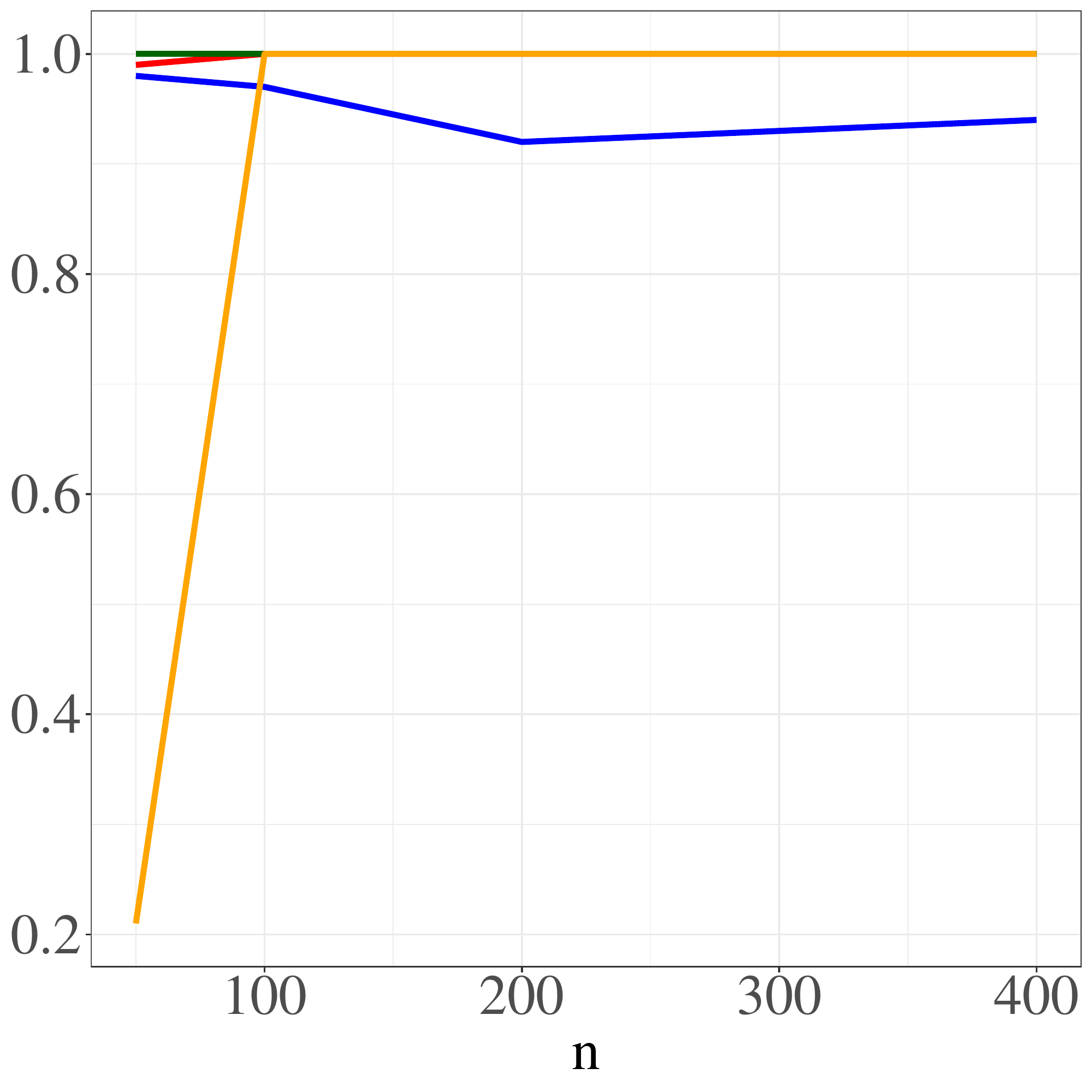}
\end{subfigure}

\vskip\baselineskip
\begin{subfigure}[b]{0.3\textwidth}
     \caption[]
       {{\footnotesize $\beta_{0i} = 1.5, \mu_{0} = - \log n$}}
 	\centering
	\includegraphics[width=1.1\textwidth, height = \textwidth]{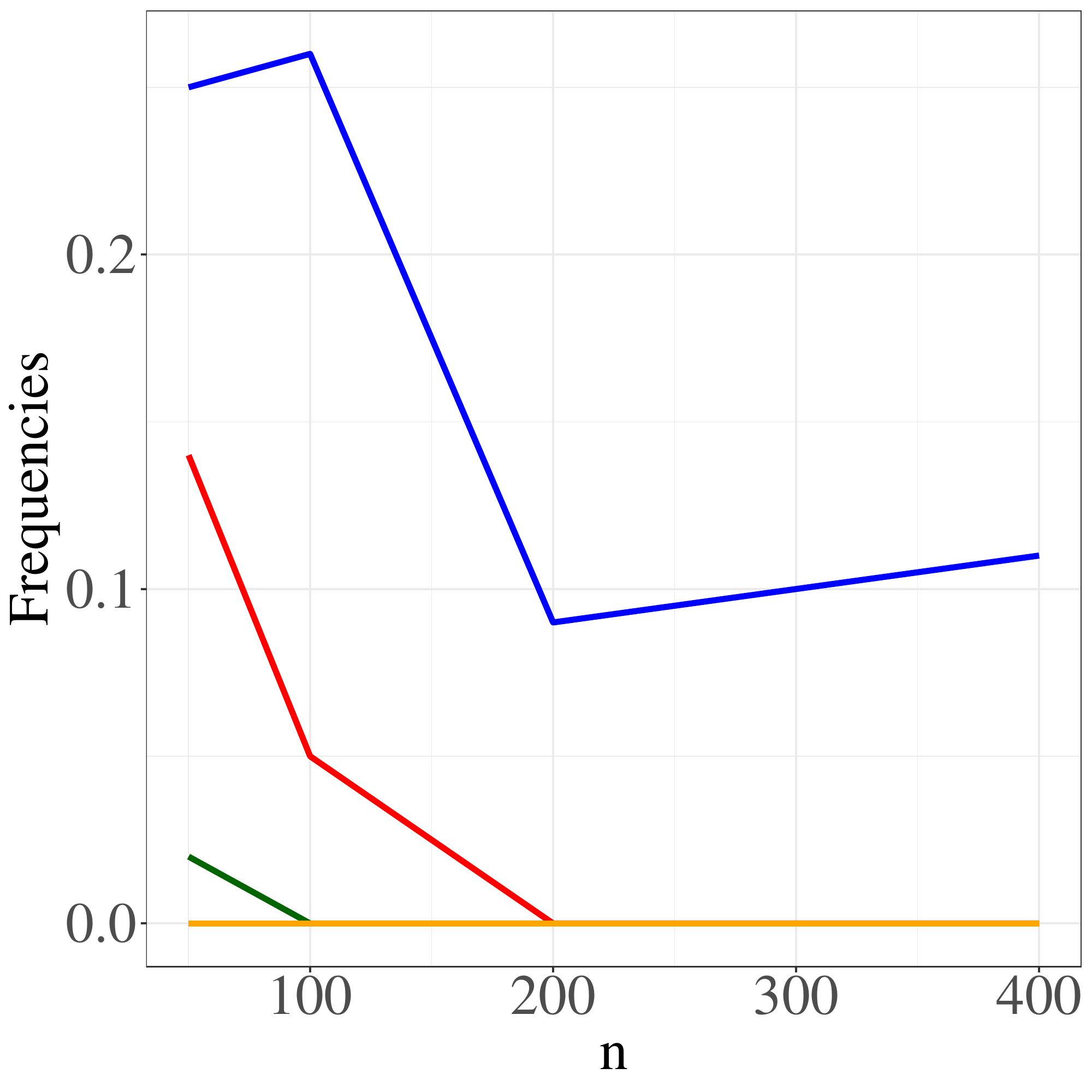}
\end{subfigure}
 \hfill \hfill \hfill \hspace{-5mm}
\begin{subfigure}[b]{0.3\textwidth}
   \caption[]
      {{\footnotesize $\beta_{0i} = \sqrt{\log n}, \mu_{0} = - \log n$}}
 	\centering
	\includegraphics[width=\textwidth]{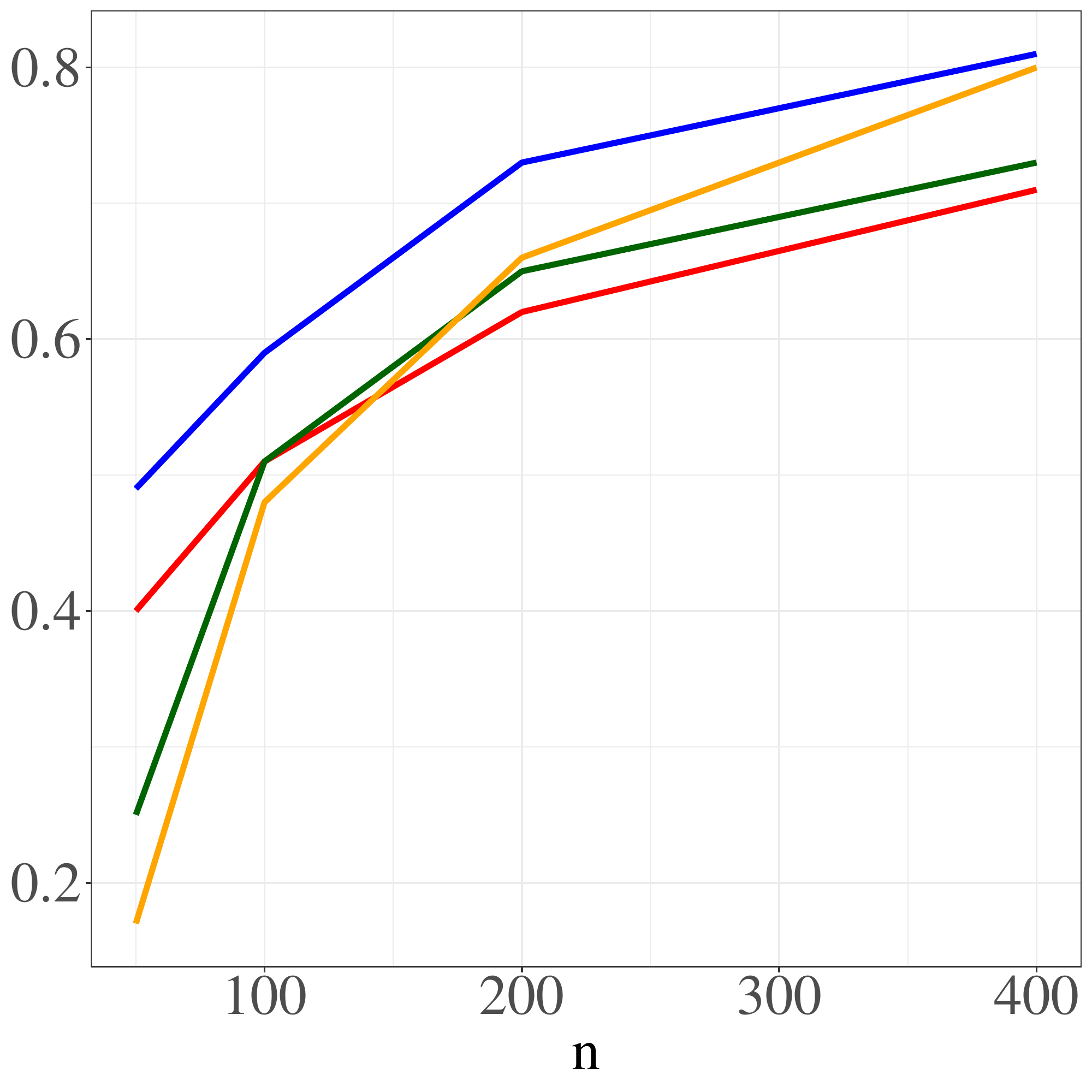}
\end{subfigure}
\hfill
\begin{subfigure}[b]{0.3\textwidth}
    \caption[]
       {{\footnotesize $\beta_{0i} = \log n, \mu_{0} = - \log n$}}
 	\centering
	\includegraphics[width=\textwidth]{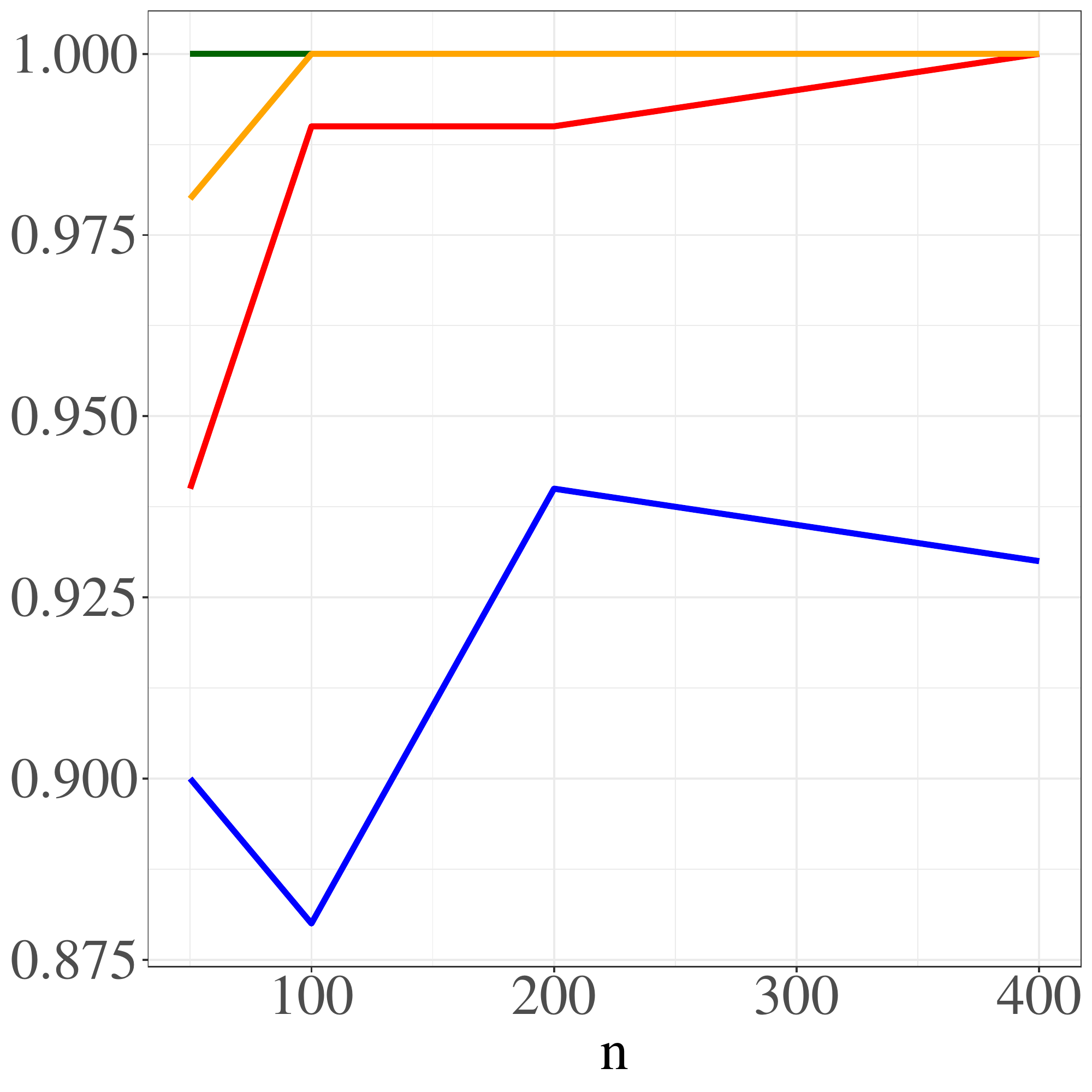}
\end{subfigure}

 \caption[ ]
        {\small Simulation results on frequencies of the true support selected by BIC. \legendsquare{blue}~$s_0 = 2$,
  \legendsquare{red}~$s_0 = \lfloor \sqrt{n/2} \rfloor$,
  \legendsquare{darkgreen}~$s_0 = \left \lfloor \sqrt{n}\right \rfloor$, \legendsquare{orange}~$s_0 = \left \lfloor 2\sqrt{n}\right \rfloor$.  }        
         \label{Figure-Frequencies-of-Selection}
\end{minipage}
\end{figure}

\begin{figure}[p]
\begin{minipage}{\textwidth}
\centering

\begin{subfigure}[b]{0.3\textwidth}
     \caption[ ]
        {{\footnotesize $\beta_{0i} = 1.5, \mu_{0} = - 1.5$}}
 	\centering
	\includegraphics[width=1.1\textwidth, height = \textwidth]{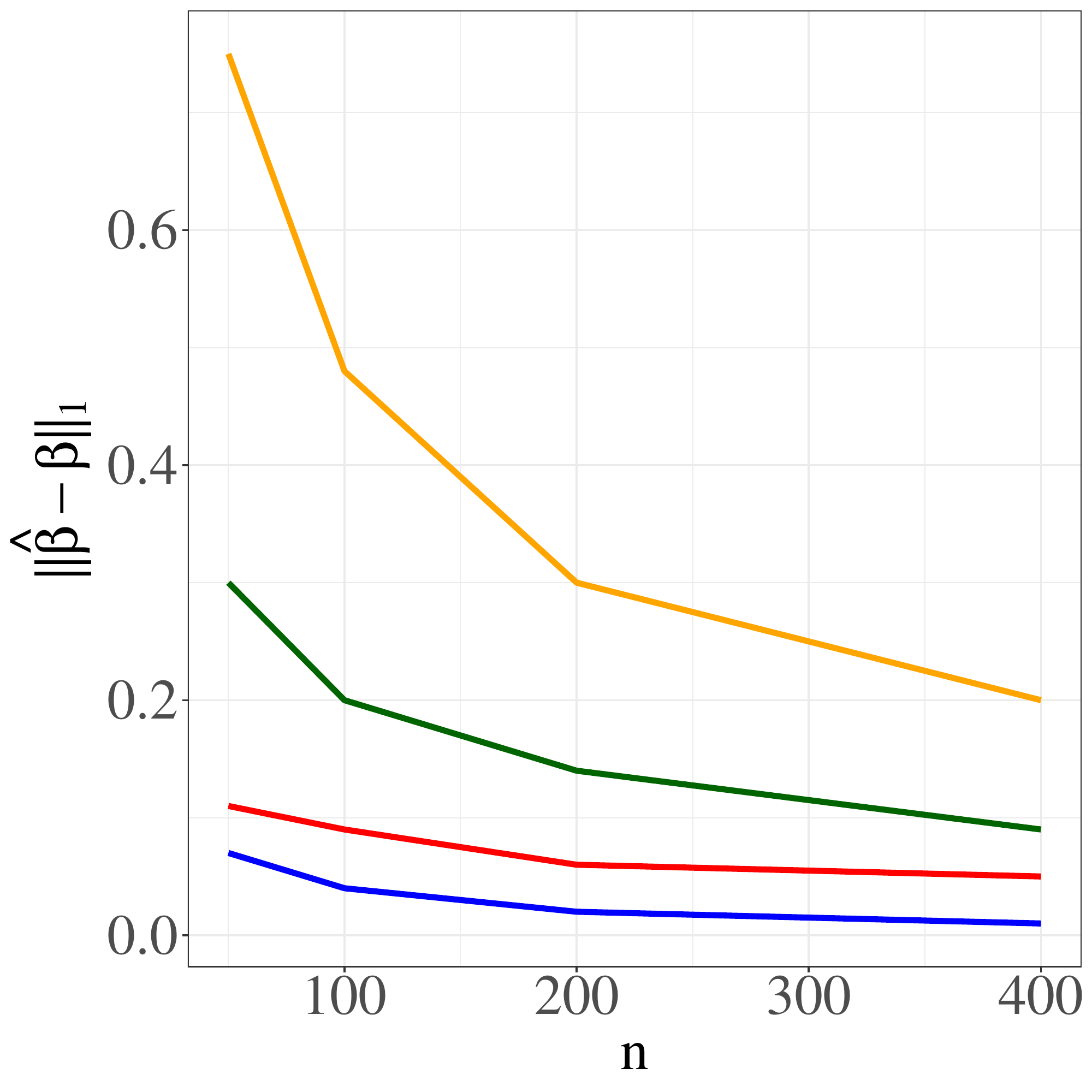}
\end{subfigure}
 \hfill \hfill \hfill \hspace{-5mm}
\begin{subfigure}[b]{0.3\textwidth}
   \caption[]
        {{\footnotesize $\beta_{0i} = \sqrt{\log n}, \mu_{0} = -1.5$}}
 	\centering
	\includegraphics[width=\textwidth]{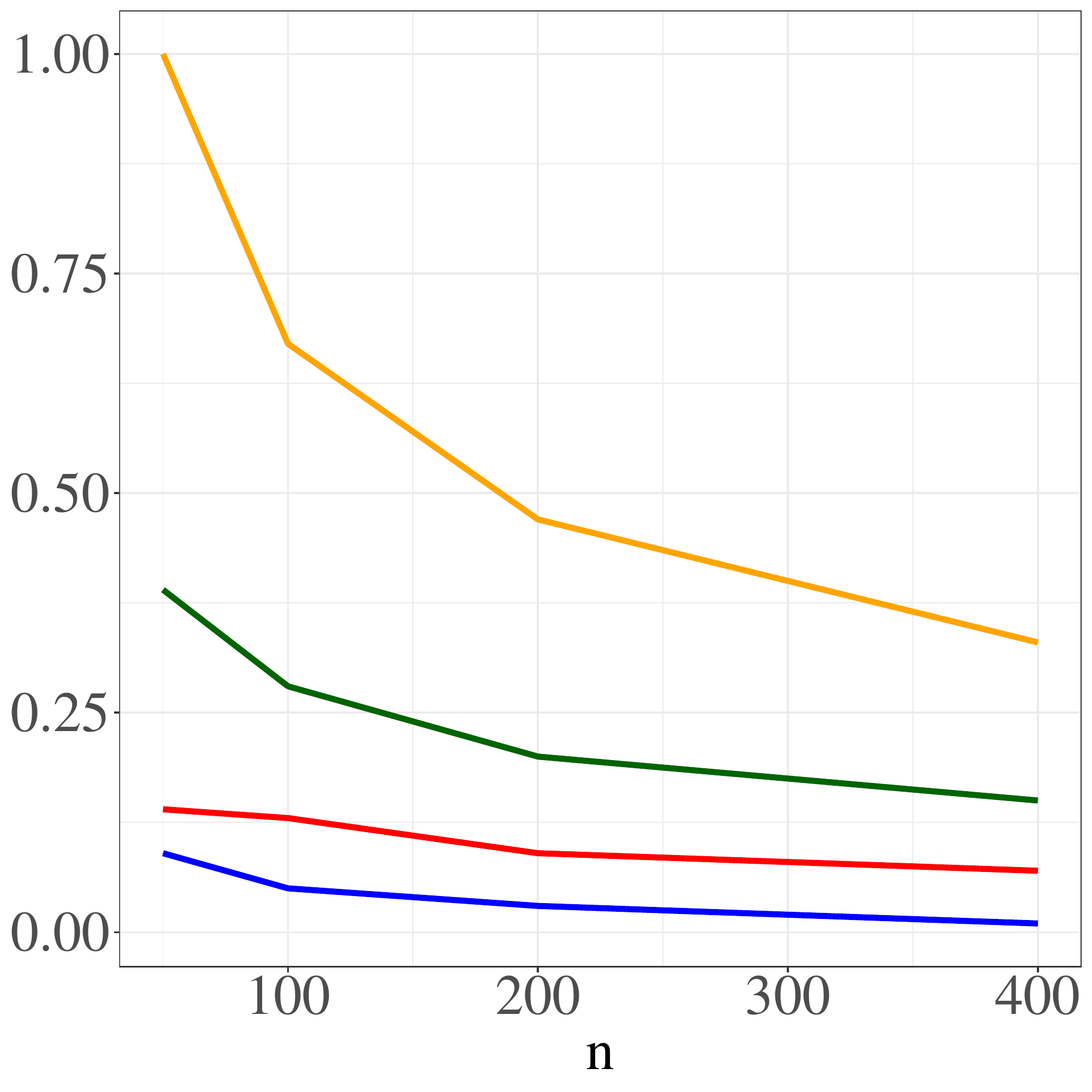}
\end{subfigure}
\hfill
\begin{subfigure}[b]{0.3\textwidth}
   \caption[]
        {{\footnotesize $\beta_{0i} = \log n, \mu_{0} = - 1.5$}}
 	\centering
	\includegraphics[width=\textwidth]{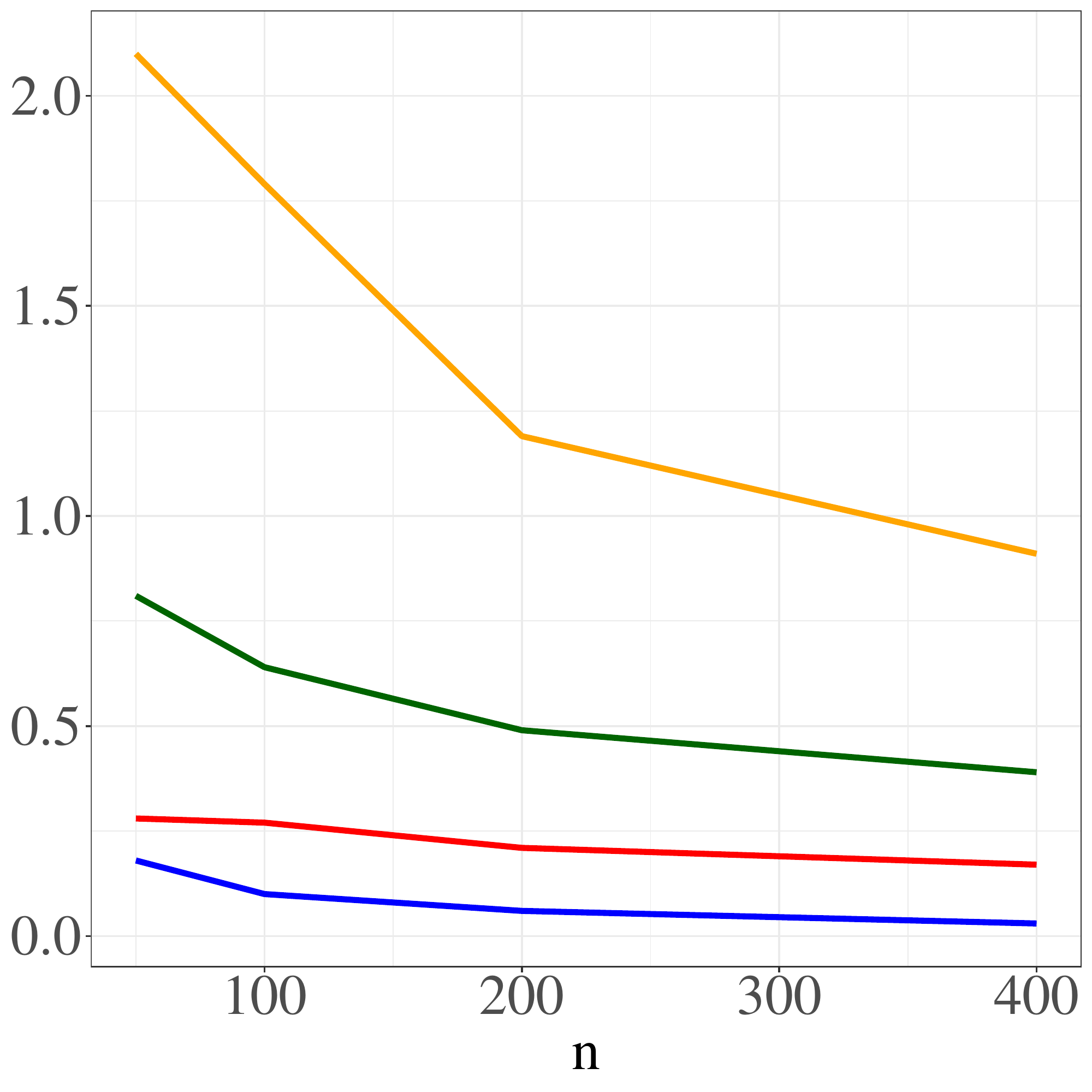}
\end{subfigure}

\vskip\baselineskip
\begin{subfigure}[b]{0.3\textwidth}
     \caption[]
        {{\footnotesize $\beta_{0i} = 1.5, \mu_{0} = - \sqrt{\log n}$}}
 	\centering
	\includegraphics[width=1.1\textwidth, height = \textwidth]{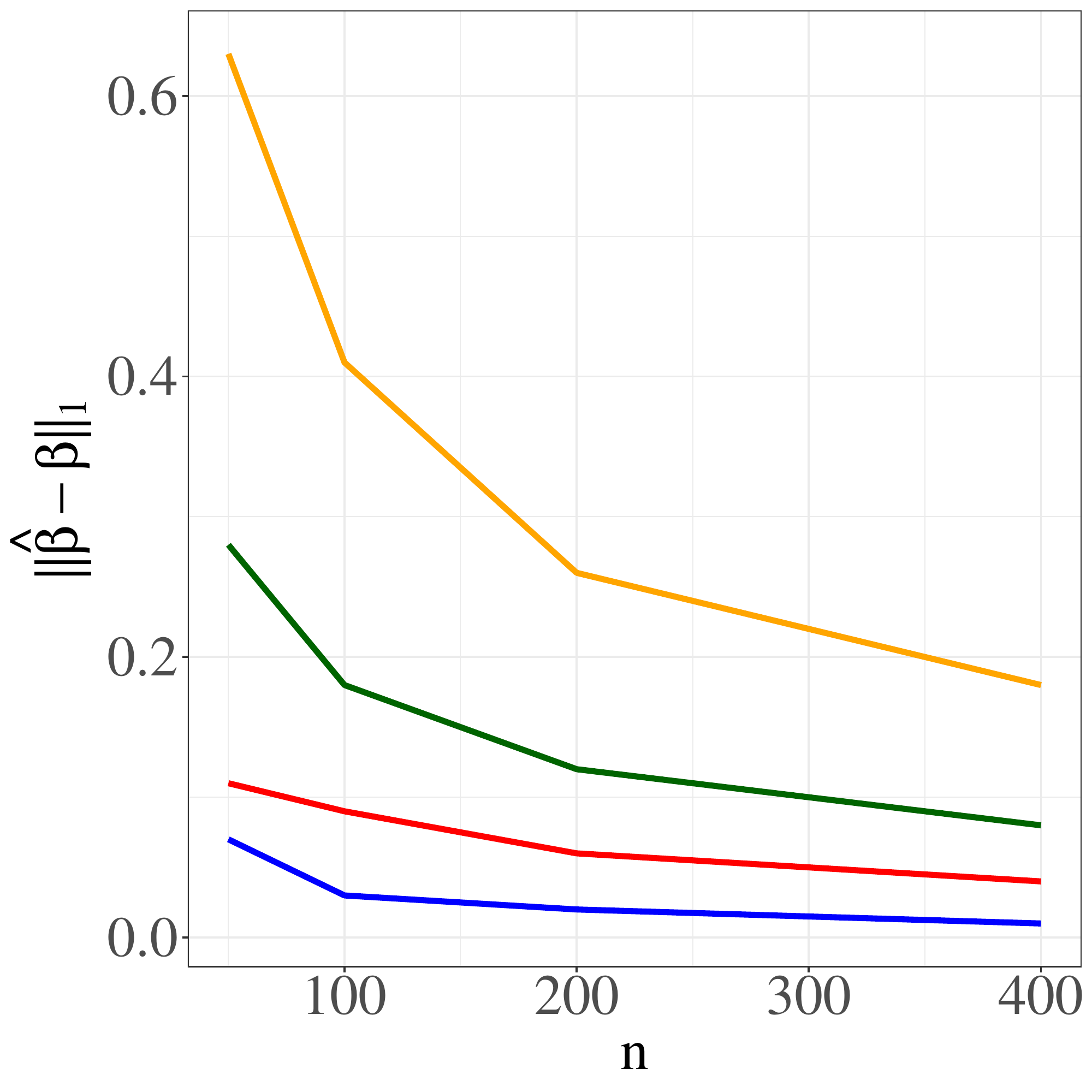}
\end{subfigure}
 \hfill \hfill \hfill \hspace{-5mm}
\begin{subfigure}[b]{0.3\textwidth}
   \caption[]
        {{\footnotesize $\beta_{0i} = \sqrt{\log n}, \mu_{0} = - \sqrt{\log n}$}}
 	\centering
	\includegraphics[width=\textwidth]{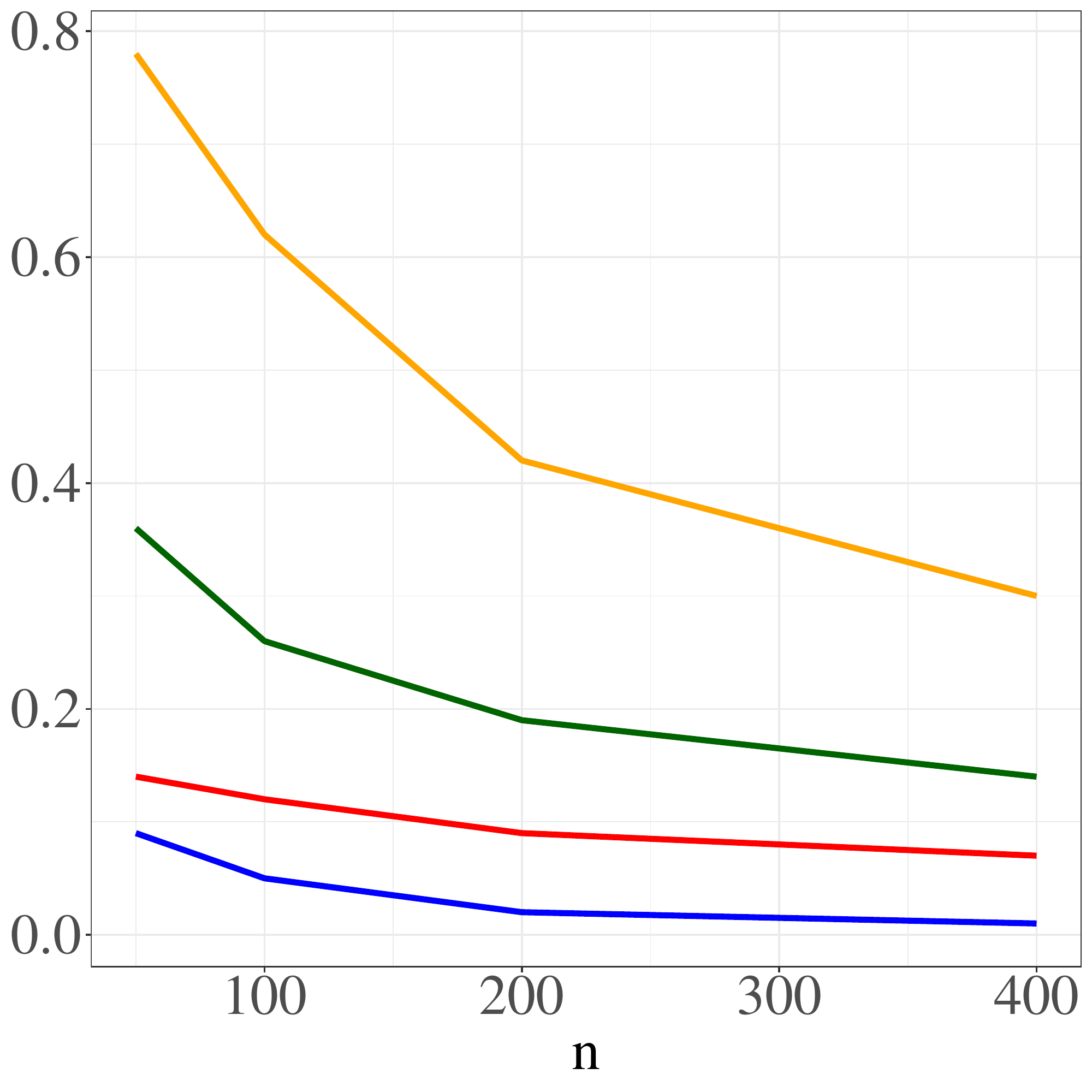}
\end{subfigure}
\hfill
\begin{subfigure}[b]{0.3\textwidth}
    \caption[]
        {{\footnotesize $\beta_{0i} = \log n, \mu_{0} = - \sqrt{\log n}$}}
 	\centering
	\includegraphics[width=\textwidth]{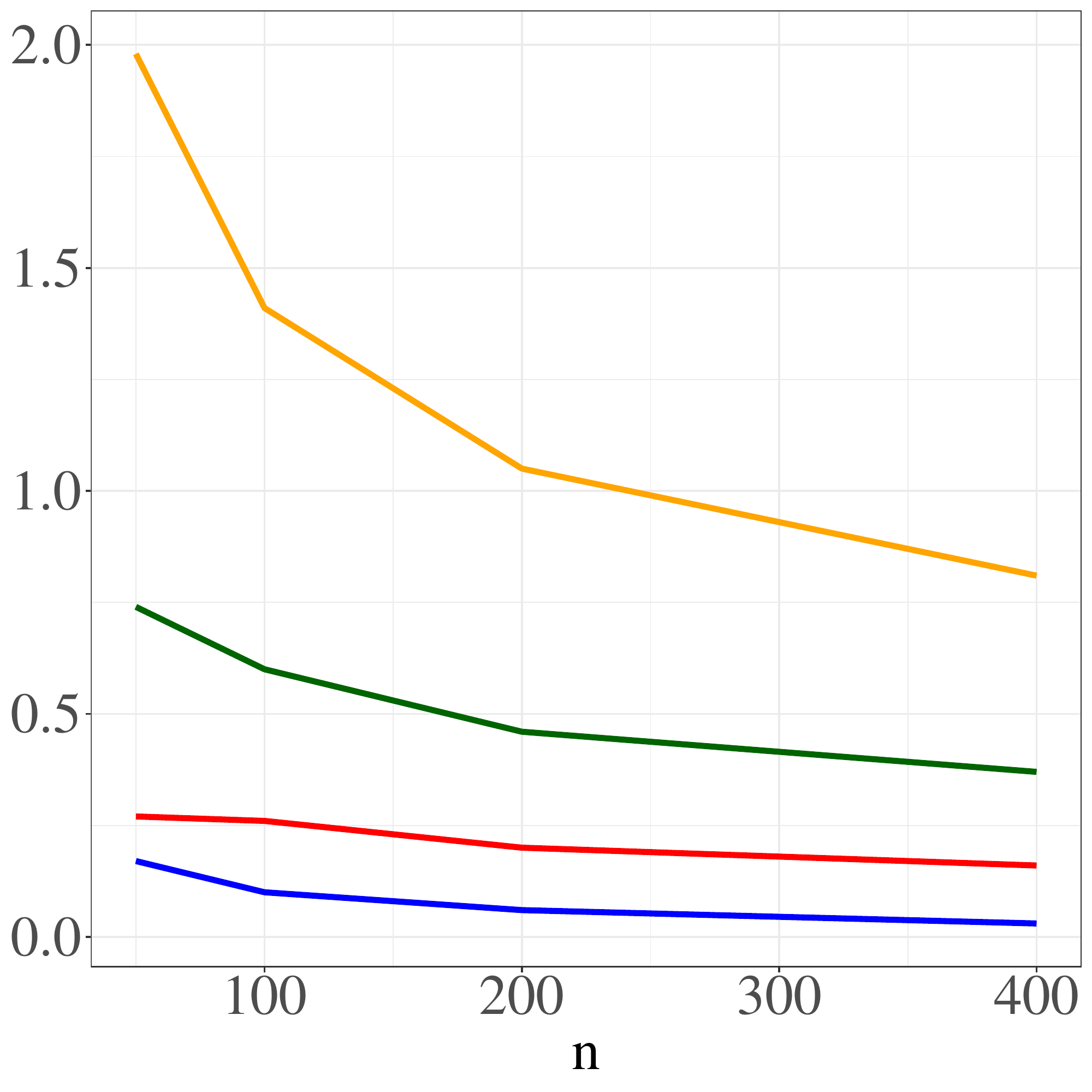}
\end{subfigure}

\vskip\baselineskip
\begin{subfigure}[b]{0.3\textwidth}
     \caption[]
        {{\footnotesize $\beta_{0i} = 1.5, \mu_{0} = - \log n$}}
 	\centering
	\includegraphics[width=1.1\textwidth,  height = \textwidth]{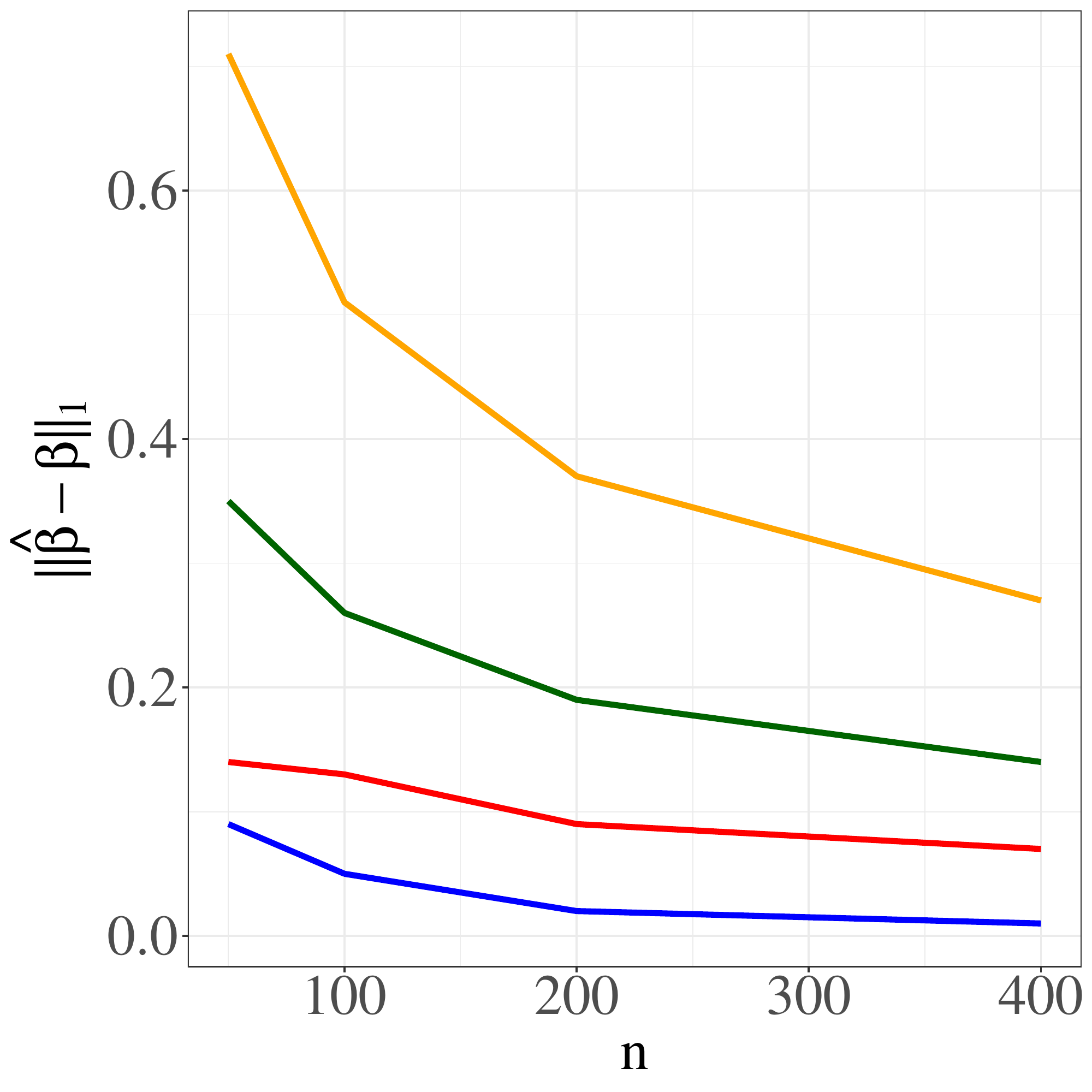}
\end{subfigure}
 \hfill \hfill \hfill \hspace{-5mm}
\begin{subfigure}[b]{0.3\textwidth}
   \caption[]
        {{\footnotesize $\beta_{0i} = \sqrt{\log n}, \mu_{0} = - \log n$}}
 	\centering
	\includegraphics[width=\textwidth]{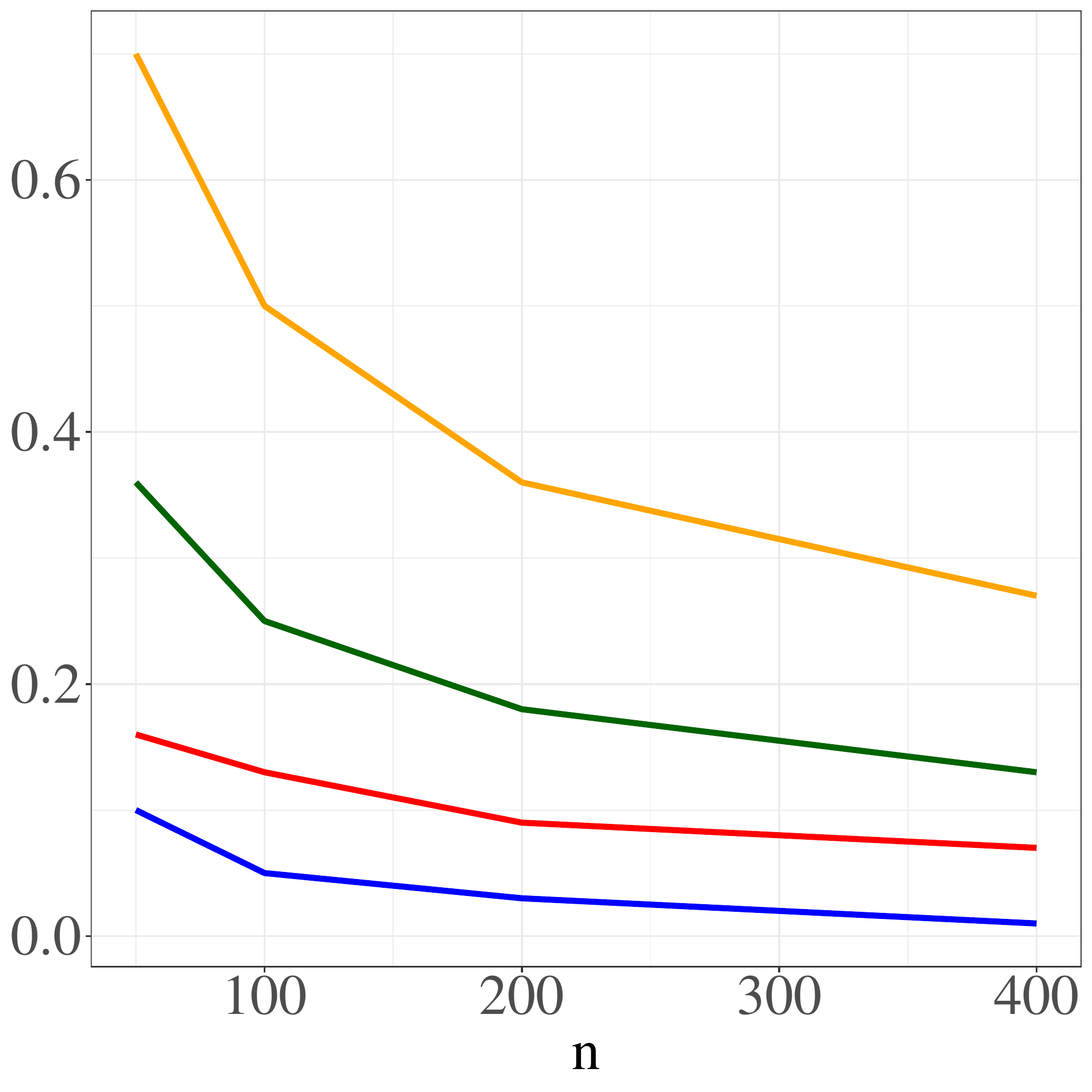}
\end{subfigure}
\hfill
\begin{subfigure}[b]{0.3\textwidth}
    \caption[]
        {{\footnotesize $\beta_{0i} = \log n, \mu_{0} = - \log n$}}
 	\centering
	\includegraphics[width=\textwidth]{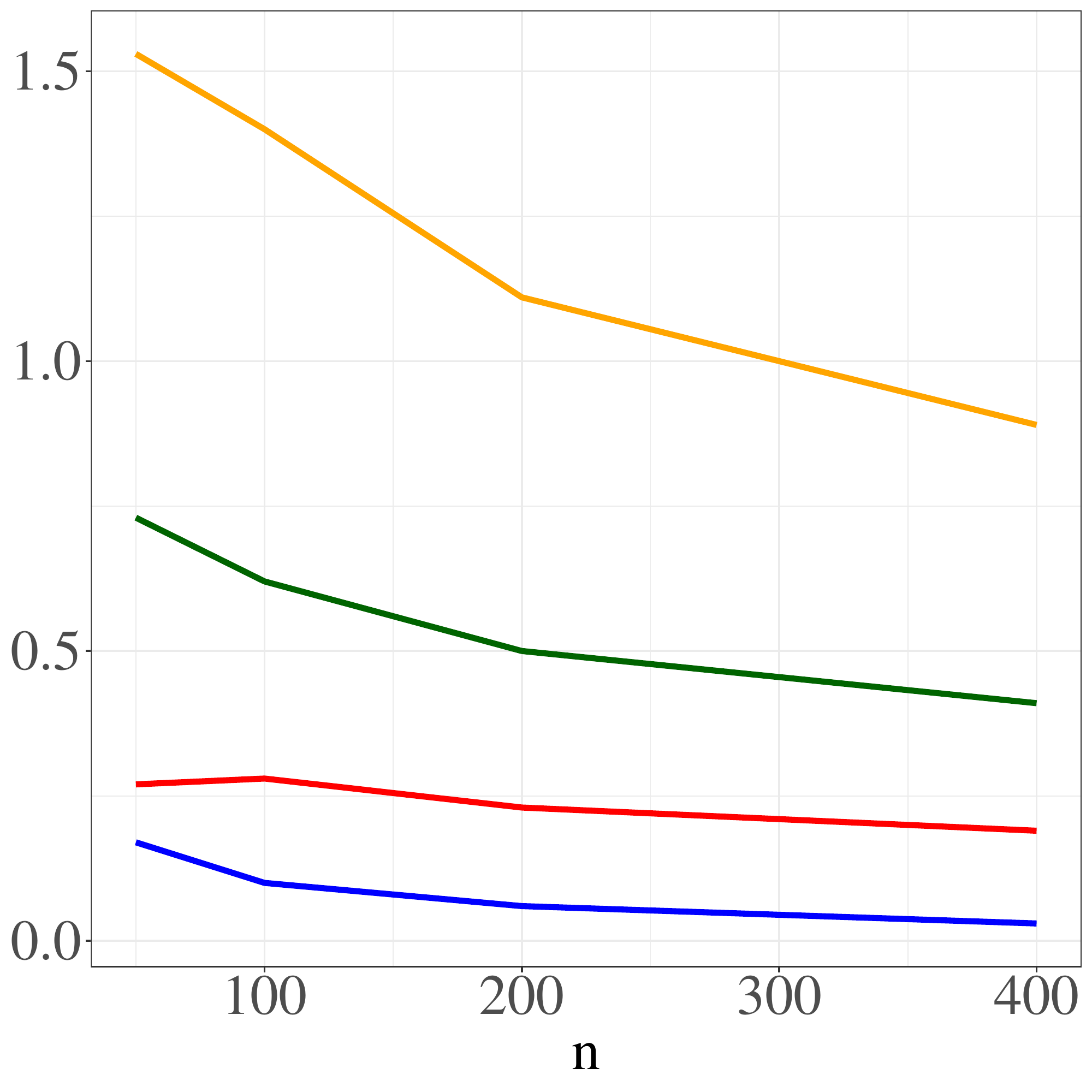}
\end{subfigure}

 \caption[ ]
      {\footnotesize Simulation results on the $\ell_1$-norm of $\hat{\vbeta}(\hat{s}) - \vbeta_{0}$ with $\hat{s}$ selected by BIC. \ \legendsquare{blue}~$s_0 = 2$,
  \legendsquare{red}~$s_0 = \lfloor \sqrt{n/2} \rfloor$,
  \legendsquare{darkgreen}~$s_0 = \left \lfloor \sqrt{n}\right \rfloor$, \legendsquare{orange}~$s_0 = \left \lfloor 2\sqrt{n}\right \rfloor$.    }        
         \label{Figure-Norm-of-Beta}
\end{minipage}
\end{figure}  

Figure \ref{Figure-Frequencies-of-Selection} reports the frequencies when the  support of $\vbeta_{0}$ is correctly identified for different settings (Figure \ref{Figure-Number-of-Regressors} in Appendix \ref{Appendix-Section-Simulation} in the supplementary material provides the results on the average number of nonzero $\beta$  selected based on our procedure in comparison to $s_0$). Figure \ref{Figure-Norm-of-Beta} reports the simulation results on the $\ell_{1}$-norm of $\hat{\vbeta}(\hat{s})-\vbeta_{0}$   (Figure \ref{Figure-Norm-of-Mu} in Appendix \ref{Appendix-Section-Simulation} in the supplementary material reports the simulation results on $| \hat{\mu}(\hat{s}) - \mu_{0}|$).

Figure \ref{Figure-Frequencies-of-Selection} shows that, in general, our estimation procedure works well in terms of model selection. In most  cases, as $n$ increases, BIC tends to correctly identify the support of $\vbeta_0$. The only exception is the case when $\mu_0 = -\log n$ and $\beta_{0i}$ is of lower magnitude as compared to $\mu_0$, especially for the case when $\beta_{0i} = 1.5$. In that case, the model selection results are worse than the other cases when the magnitude of $\mu_0$ is smaller than that of $\beta_{0i}$. This is partly because the smaller the local parameter $\beta_{0i}$ is, the harder it is to distinguish it from noise.  
When $\mu_0$ is less negative, i.e., when the network is globally denser, BIC has better model selection results at all levels of $\beta_{0i}$. 
Figure \ref{Figure-Frequencies-of-Selection} also shows that, given $\mu_{0}$, the larger the magnitude of $\beta_{0i}$ and the larger heterogeneity (more neighbors for nonzero $\beta$'s) are,  the better model selection results are.  
Figure \ref{Figure-Norm-of-Beta} shows that the
the estimation accuracy of $\hat{\vbeta}(\hat{s})$ generally improves as $n$ increases while it worsens as $s_0$ increases, reflecting the difficulty of estimating more parameters. Additional simulation results concerning the difference between $\hat{s}$ and $s_{0}$,  and the estimation  accuracy of $\hat{\mu}(\hat{s})$, found in Appendix \ref{Appendix-Section-Simulation},  support our findings.

From these simulation results, we may conclude that our estimation procedure in practice works well for a wide variety of networks with varying degrees of local density and global sparsity. {At the same time, our estimation procedure works well even when there are $O(\sqrt{n})$ nonzero true $\beta$ parameters (i.e., $s_0 = O(\sqrt{n})$), allowing for much more heterogeneity than the Erd\H{o}s-R\'{e}nyi model. These observations support our motivating discussion made in Section \ref{sec:SbetaM}  
on the S$\beta$M in comparison with the Erd\H{o}s-R\'{e}nyi model and the $\beta$-model.}

\section{Data Analysis}\label{Sec:DA}
 
 In this section, we analyze the microfinance take-up example in \cite{Banerjee:etal:2013} to illustrate the usefulness of our model and estimation procedure. \cite{Banerjee:etal:2013} investigated the role of social networks, especially the role of those pre-identified as ``leaders" (e.g., teachers, shopkeepers, savings group leaders), on households microfinancing decisions, and modelled the microfinancing decisions using a logit model.  

\paragraph*{Data.} In 2006, data were collected for $75$ rural villages in Karnataka, a state in southern India. A census of households was conducted, and a subset of individuals was asked detailed questions about the relationships they had with others in the village. This information was used to create network graphs for each village. 

The social network data were collected along $12$ dimensions in terms of whether individuals borrowed money from, gave advice to, helped with a decision, borrowed kerosene or rice from, lent kerosene or rice to, lent money to, obtained medical advice from, engaged socially with, were related to, went to temple with, invited to one's home,  or visited another's home. A relationship between households exists if any household member indicated a relationship with members from the other household.  It should be noted that no relation exists between any two households in different villages due to the nature of data collection.

In 2007, a microfinancing institution, Bharatha Swamukti
Samsthe (BSS), began operations in these villages, and collected data on the households who participated in
the microfinancing program. For the $43$ villages that BSS entered, the total number of households is $9598$; the average number of households for each village is $223$ with a standard deviation of $56$; and the average take-up rate for BSS is $18\%$, with a cross-village standard deviation of $8\%$. 

For the data used in our paper, we considered the network in which two households are linked if and only if any of the $12$ dimensions of social contact occurred between them.  The adjacency matrix of this network is block diagonal as there exists no link between any two households in different villages. All the data sets are available through the Harvard Dataverse Network \url{https://dataverse.harvard.edu/dataset.xhtml?persistentId=hdl:1902.1/21538}. 

\paragraph*{Method.}

Following \cite{Banerjee:etal:2013}, we study how social importance, in a network sense, will affect microfinance take-up decision. Building on the S$\beta$M, we identified ``leaders" as those households whose $\beta$ parameters are estimated as nonzero. 

Since households in different villages are not connected, we allowed village-dependent $\mu$ parameters to capture the individual village effects in fitting the S$\beta$M. More precisely, for each village, we first fitted the S$\beta$M to the observed network of the village by choosing $s$ via BIC. {In this data analysis, we examined $s \leq n_m/2$ for village $m$ ($n_m$ is the sample size of village $m$), i.e., the maximum value of $s$ examined is half the sample size of the village.}  Having obtained the parameter estimates denoted as $\hat{\vbeta}_m$ and $\hat{\mu}_m$ for village $m$, 
 we used $\hat{\beta}^{\star}_{mi} =  \hat{\beta}_{mi} + \hat{\mu}_m/2$ as a covariate for household $i$ in village $m$ to model 
 the probability that this household participated in the microfinancing program
\begin{equation}
\label{eq:parti}
\mathrm{Parti}_{mi} =  \Lambda (c + \theta \cdot \hat{\beta}^{\star}_{mi} ),
\end{equation}
where $\Lambda$ is the logistic function such that $\Lambda(a)=\log (a)/ \log(1-a)$ for $a\in(0,1)$, and $c \in \mR$ and $\theta \in \mR$ are two unknown parameters. The role of these estimated $\beta$'s will be referred to as $\beta$-centrality hereafter. As an alternative measure of leadership, we also examined the use of an  indicator variable ``Leader", defined as $\mathrm{Leader}_{mi} := 1\{\hat{\beta}_{mi} >0\}$. Below we suppress the dependence of $\hat{\vbeta}$ and $\hat\mu$ on $m$ for simplicity.

For comparison,  degree centrality and eigenvector centrality, two widely used measures of the influence of a node, were also investigated. In the context of the data analysis, the degree centrality of  household $i$ is $d_{i}$, the number of links that this household has. This is a measure of how well-connected a household is in the network.  In graph theory, eigenvector centrality is a recursively defined notion of importance by associating high scores to those nodes that are connected to high-scoring nodes. Mathematically, the eigenvector centrality of the $i$th household is the $i$th element of $\bm{x}$, where $\bm{x}=(x_{1}, \dots, x_n)^T$ is the nonnegative eigenvector associated with the largest eigenvalue of the adjacency matrix, normalized to have Euclidean norm  $n$. Considering various combinations of these measures of influence, we examine the following models: 
\[ (1)~\mathrm{Parti}_i = \Lambda (c + \theta \cdot d_{i} ); 
(2)~\mathrm{Parti}_i  = \Lambda (c + \theta \cdot x_{i} );
(3)~\mathrm{Parti}_i  = \Lambda (c + \theta \cdot \hat{\beta}^{\star}_i  ); \]
\[(4)~\mathrm{Parti}_i = \Lambda (c + \theta \cdot (1\{\hat{\beta}_i>0 \} +\hat{\mu}/2) );\]
\[(5)~\mathrm{Parti}_i = \Lambda (c + \theta_1 \cdot d_i + \theta_2  \cdot \hat{\beta}^{\star}_i  );
(6)~\mathrm{Parti}_i = \Lambda (c + \theta_1 \cdot d_i + \theta_2 \cdot (1\{\hat{\beta}_i>0 \} +\hat{\mu}/2) ); \]
\[(7)~\mathrm{Parti}_i = \Lambda (c + \theta_1 \cdot x_i + \theta_2  \cdot \hat{\beta}^{\star}_i ); 
(8)~\mathrm{Parti}_i = \Lambda (c +  \theta_1 \cdot x_i + \theta_2  \cdot (1\{\hat{\beta}_i>0 \} +\hat{\mu}/2));\]
where $c\in \mR, \theta \in \mR, \theta_1 \in \mR$ and $\theta_2 \in \mR$ are unknown parameters. Note that in these models, $\hat\mu$ can not be absorbed into $c$ because it is a parameter dependent on the village $m$. In examining these models, we wanted to assess the effects of different centralities in models (1)--(4), and to compare their relative merits when competing with each other in models (5)--(8). Finally the parameters in  models (1)--(8) were estimated via the method of maximum likelihood for a logistic regression model.

\paragraph*{Results.}
Using BIC, the S$\beta$M gave a fit with an average $26 \%$ of the households having nonzero $\beta$ parameter. To assess how the model fits the data graphically, in Figure \ref{fig:1all}, we plotted the empirical distribution of the degrees of the observed network (black solid points) and the degree distribution after fitting the S$\beta$M (red open dots). The latter was obtained by averaging the empirical degree distributions of $100$ randomly generated networks from the S$\beta$M with the estimated parameters. For reference, we  also included the degree distribution of the Erd\H{o}s-R\'enyi model fit.  It can be seen that the empirical degree distribution of the data  in the upper tail follows roughly a straight line, suggesting that a power law may be appropriate. However, the huge discrepancy  between the empirical distribution of the data degrees and the Erd\H{o}s-R\'enyi model fit (black dash dotted line) implies that the Erd\H{o}s-R\'enyi model does not fit the data. In contrast, the S$\beta$M fit tracks the empirical distribution of the data very closely in the upper tail, thus providing a much better fit to capture the heavy upper tail of the empirical degree distribution. Interestingly, it can also be seen that the S$\beta$M fit yields a Poisson curve for those points with small degrees  and a different pattern for those with larger degrees. This pattern can be loosely understood as the result of assigning nonzero $\beta$ parameters to the nodes with large degrees.  In this sense, the S$\beta$M fit mimics a mixture of the Erd\H{o}s-R\'enyi and $\beta$-models which echos our point made previously that the S$\beta$M interpolates these two.  We remark further that the presence of many isolated nodes and many nodes with a small number of links prevented fitting the $\beta$-model to the network.
 
 \begin{figure}[hbt!]
\centering
 \includegraphics[width=0.9\textwidth]{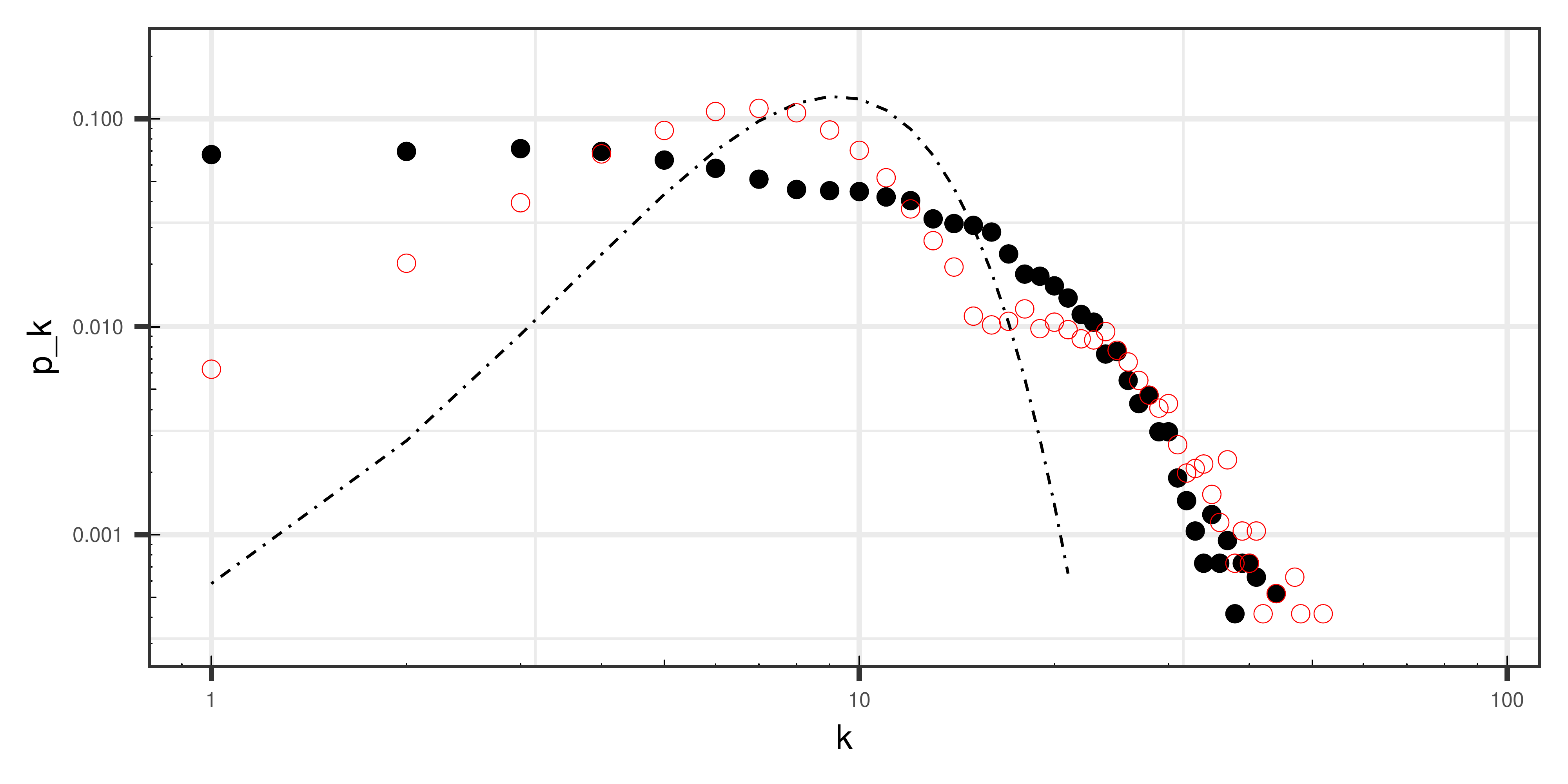}
\caption{\label{fig:1all} The black solid dots represent the degree distribution (frequency of degree denoted as $p_k$ versus degree $k$) on the log-log scale using observed data from 43 villages. The red open dots correspond to the averaged empirical degree distributions of $100$ randomly generated networks from the S$\beta$M with the estimated parameters. The fitted degree distribution assuming the frequencies follow a Poisson distribution is plotted as the black dash dotted  line.}
\end{figure}

Table \ref{Table: Degree or Beta Logit} provides the effects of degree centrality and eigenvector centrality on microfinance take-up, along with the effects of $\beta$-centrality and how being a ``Leader"  can influence take-up. From the results on models (1)--(4), we can see that the effect on microfinance participation is much higher when $\beta$ is larger (or when the node is identified as ``Leader") for the S$\beta$M. On the other hand, although the effect of degree centrality is also statistically significant, the magnitude is much smaller when compared with eigenvector centrality or $\beta$-centrality or being a ``Leader".

 \begin{table}[hbt!] \centering 
 \begin{threeparttable}
  \caption{Effect of Different Network Statistics on Take-Up. Standard errors are in parentheses.} 
  \label{Table: Degree or Beta Logit} 
\begin{tabular}{@{\extracolsep{\fill}}lcccccccc} 
\\[-1.8ex]\hline 
\hline \\[-1.8ex] 
 & \multicolumn{8}{c}{\textit{Dependent variable: take-up}} \\ 
\cline{2-9} 
 
\\[-1.8ex] & (1) & (2) & (3) & (4) & (5) & (6) & (7) & (8)\\ 
\hline \\[-1.8ex] 

Degree & 0.010$^{***}$ &  &  &  & $-$0.001 & $-$0.004 &  &  \\ 
  & (0.003) &  &  &  & (0.005) & (0.005) &  &  \\ 
  & & & & & & & & \\ 
 Eigenvector &  & 0.575$^{***}$ &  &  &  &  & 0.442$^{**}$ & 0.239 \\ 
  &  & (0.131) &  &  &  &  & (0.193) & (0.178) \\ 
  & & & & & & & & \\ 
 Beta &  &  & 0.198$^{***}$ &  & 0.212$^{**}$ &  & 0.071 &  \\ 
  &  &  & (0.052) &  & (0.084) &  & (0.076) &  \\ 
  & & & & & & & & \\ 
 Leader &  &  &  & 0.316$^{***}$ &  & 0.366$^{***}$ &  & 0.239$^{***}$ \\ 
  &  &  &  & (0.063) &  & (0.088) &  & (0.085) \\ 
  & & & & & & & & \\ 
 
\hline 
\hline \\[-1.8ex] 
 
\end{tabular} 
\begin{tablenotes}
      \small
       \item Note: $\ensuremath{^{*}}\ensuremath{p<0.1};\ensuremath{^{**}}\ensuremath{p<0.05};\ensuremath{^{***}}\ensuremath{p<0.01}$

    \end{tablenotes}
  \end{threeparttable}
\end{table}

Table \ref{Table: Degree or Beta Logit} also provides effects of degree centrality or eigenvector centrality on microfinance take-up when controlling for $\beta$-centrality or being a ``Leader".  The regression results show that, after controlling for $\beta$-centrality or being a ``Leader", the effects of degree centrality or eigenvector centrality are smaller, with the effect of degree centrality also becoming not statistically significant. Overall, we find the magnitude of $\beta$ is significantly related to eventual microfinance participation. In particular, whether the household
 plays a leader role in the village  is even more significantly related to eventual microfinance participation. Additional results  can be found in Appendix \ref{sec:addSimu} where  a probit link and an identity link in (\ref{eq:parti}) were used. All the additional results are consistent with the conclusions made from Table \ref{Table: Degree or Beta Logit}.

In our analysis, we did not distinguish causal or correlation effects in social networks. We note that other factors such as exogenous variation in the injection points could be useful for causal effects analysis. 
As the main objective of the current analysis is to provide insights on the role of social importance on program participation through the use of the S$\beta$M by defining new centrality measures such as $\beta$-centrality, we leave further investigation on dissecting causal and correlational effects or results from a structural economics model to future study.

\section{Conclusion}
\label{sec:conclusion}
We have proposed the Sparse $\beta$-Model (S$\beta$M) as a new generative model that can explicitly characterize global and local sparsity. We have shown that conventional asymptotic results including consistency and asymptotic normality  results of its MLE are readily available for a wide variety of networks that are dense or sparse, when the support of the parameter is known. When it is unknown, we have developed an $\ell_0$-norm penalized likelihood approach for estimating the parameters and their support. We overcome the seemingly combinatorial nature of the optimization algorithm for computing the penalized estimator by fitting at maximum $n-1$ nested models with their support read from the degree sequence, thanks to a novel monotonicity lemma used to develop the solution path. A sufficient condition on the signal strength which is referred to as the $\beta$-min condition guarantees that, with high probability, the S$\beta$M chooses the correct model along its solution path. Therefore, the S$\beta$M represents a new class of models that are computationally fast, theoretically tractable, and intuitively attractive.

The computational tractability of the $\ell_0$ penalized estimation approach for the S$\beta$M depends on the monotonicity lemma which exploits the unique feature of the $\beta$-model. There are several recent  generalizations of the $\beta$-model to which this lemma is not applicable. The first class of models is the $\beta$-model for directed graphs where, for each node,  incoming and  outgoing parameters are used for capturing the directional effect \citep{Holland:Leinhardt:1981}.  We can adopt a similar strategy assuming that these parameters are sparse possibly after a reparametrization as developed for the S$\beta$M. It is not difficult to see, however,  that the monotonicity lemma no longer holds. Therefore, when the support of these parameters is unknown, the $\ell_0$-penalty based estimator is no longer computationally feasible. In view of this, we may develop $\ell_1$-norm penalized likelihood estimation, immediately connecting this methodology to the vast literature on penalized likelihood methods for binary regression. A disadvantage of the $\ell_1$ penalized approach  is that the resulting estimators will be biased, sometimes substantially so if the amount of shrinkage needs to be excessive for very sparse models. Another future direction for research is to include covariate information at the nodal or link level. Progress has been made in this vein by extending the $\beta$-model \citep{Graham:2017} and its generalization to directed networks  \citep{Yan:etal:2018}. At the moment, however, these generalizations are not known to work for relatively sparse networks if the interest is on the node-specific parameters. The methodology proposed in this paper can be studied in this wider context. We note again that, where the support of the parameters is unknown, the $\ell_0$-penalty based estimation is no longer tractable. \cite{Stein:Leng:2020} reported encouraging preliminary results for the S$\beta$M with covariates using an $\ell_1$-norm based penalization method. The results for other future directions will be reported elsewhere.

\bibliographystyle{chicago}
\bibliography{SparseBeta}

\newpage
\appendix
\appendixpage

The supplementary material contains all the proofs, discussion on the existence of the $\ell_0$-constrained MLE, and additional simulation results. 

\section{Proofs}

{In what follows, all limits are taken as $n \to \infty$.}

\subsection{Proofs for Section \ref{sec:SbetaM}}
\begin{proof}[Proof of Proposition \ref{prop:erm}] 
Since $A_{ij}, 1\le i<j \le n$ are i.i.d. Bernoulli random variables with 
\[ 
E[A_{ij}]=p \quad\text{and}\quad  \Var(A_{ij}) = p(1-p),
\]
we have
\[
 E\left [\sum_{i<j} A_{ij}\right]=\binom{n}{2}p \quad\text{and}\quad \Var \left (\sum_{i<j} A_{ij} \right)=\binom{n}{2}p(1-p)\coloneqq s_n^2.
\]
We will prove that
\begin{equation}\label{eq:prop1:clt}
 \frac{\sum_{i<j} A_{ij}-\binom{n}{2}p}{\sqrt{\binom{n}{2}p(1-p)}} \stackrel{d}{\to} N(0,1)
\end{equation}
whenever $n^2 p(1-p) \to \infty$. To this end, it suffices to verify 
 following Lindeberg condition:
 \[ 
\forall \epsilon > 0, \quad  \frac{1}{s_n^2} \sum_{i<j} E[B_{ij}^2 I(|B_{ij}|\ge \epsilon s_n)]  \to 0, 
\]
where $B_{ij}=A_{ij}-E[A_{ij}]$. Since $|B_{ij}| \le 1$, the left hand side will be zero whenever $\epsilon s_n >1$, which is immediate as $n^2 p(1-p) \to \infty$ implies that $s_n \to \infty$. We can rewrite  the left hand side of \eqref{eq:prop1:clt} as
 \[ 
n^{\gamma/2} \sqrt{\binom{n}{2}} \frac{\sum_{i<j}  A_{ij}/\binom{n}{2}- p}{\sqrt{n^{\gamma}p(1-p)}} =n^{\gamma/2} \sqrt{\binom{n}{2}} \frac{\hat{p}- p}{\sqrt{n^{\gamma}p(1-p)}}.
 \]
So we conclude that
\[ 
 n^{1+\gamma/2} (\hat{p}-p) \stackrel{d}{\to} 
 \begin{cases}
 N(0,2p^{\dagger}(1-p^{\dagger})) & \text{if} \ \gamma = 0 \\
 N\left(0, 2p^{\dagger}\right) & \text{if} \ \gamma \in (0,2)
 \end{cases}
.
 \]
 This completes the proof. 
\end{proof}

\begin{proof}[Proof Corollary \ref{cor:erm}]
The result is a simple application of the delta method when $\gamma = 0$, and so we focus on the case where $\gamma \in (0,2)$. 
Observe that $\hat{\mu} = \log [n^{-\gamma} n^{\gamma} \hat{p}/(1-\hat{p})] = -\gamma \log n + \log n^{\gamma} \hat{p} - \log (1-\hat{p})$. We have 
\[
\hat{\mu}-\mu = (\log n^{\gamma} \hat{p} - \log n^{\gamma} p )  -( \log(1-\hat{p}) - \log (1-p) ).
\]
By Proposition \ref{prop:erm}, we have $\sqrt{n^{2-\gamma}}(n^{\gamma} \hat{p} -n^{\gamma} p) \stackrel{d}{\to} N(0,2p^{\dagger})$, so that by the delta method (or the Taylor expansion) we have 
\[
\sqrt{n^{2-\gamma}} (\log n^{\gamma} \hat{p} - \log n^{\gamma} p ) = \frac{1}{p^{\dagger}} \sqrt{n^{2-\gamma}} (n^{\gamma} \hat{p} - n^{\gamma}p) + o(1) \stackrel{d}{\to} N(0,2/p^{\dagger}) = N(0,2e^{-\mu^{\dagger}}).
\]
Likewise, we have 
\[
\log (1-\hat{p}) - \log (1-p) = (-1+o_{P}(1)) (\hat{p}-p) = o(n^{-1+\gamma/2}).
\]
Conclude that $\sqrt{n^{2-\gamma}} (\hat{\mu} - \mu) \stackrel{d}{\to} N(0,2e^{-\mu^{\dagger}})$. 
\end{proof}

\subsection{Proof of Theorem \ref{thm:asymnorm}}
\label{sec:proof thm1}

The proof of Theorem \ref{thm:asymnorm} uses Bernstein's inequality. We state Bernstein's inequality for the reader's convenience. See \cite{Boucheron:etal:2013} Theorem 2.10.

\begin{lemma}[Bernsten's inequality]
\label{lem:Bernstein}
Let $X_{1},\dots,X_{n}$ be independent random variables with mean zero such that $|X_{i}| \le b$ a.s. for all $i=1,\dots,n$. 
Then 
\[
P\left ( \left | \sum_{i=1}^{n}X_{i} \right | \ge \sqrt{2t\sum_{i=1}^{n}E[X_{i}^{2}]} +bt/3 \right ) \le 2 e^{-t}
\]
for every $t > 0$. 
\end{lemma}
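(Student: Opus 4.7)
The plan is to prove Bernstein's inequality by the classical Cram\'er--Chernoff method: control the moment generating function of each summand, multiply through by independence, and optimize the Chernoff exponent in $\lambda$. The central step is to show that for each $i$ and every $\lambda \in (0, 3/b)$,
\[
E[e^{\lambda X_i}] \le \exp\!\left( \frac{\lambda^{2} E[X_i^{2}]}{2(1-\lambda b/3)} \right).
\]
To obtain this, I would expand $e^{\lambda X_i}-1-\lambda X_i = \sum_{k\ge 2}(\lambda X_i)^{k}/k!$, use the pointwise bound $|X_i|^{k}\le b^{k-2}X_i^{2}$ (valid for $k\ge 2$ since $|X_i|\le b$ a.s.), and apply the elementary inequality $(k+2)!\ge 2\cdot 3^{k}$ to collapse the tail into a geometric series in $\lambda b/3$. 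Taking expectations (and using $E[X_i]=0$) together with $1+x\le e^{x}$ then yields the displayed MGF bound.

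By independence, setting $v = \sum_{i=1}^{n} E[X_i^{2}]$ and $S_n=\sum_{i=1}^{n}X_i$, this gives $E[e^{\lambda S_n}] \le \exp\!\left(\lambda^{2}v/(2(1-\lambda b/3))\right)$, and Markov's inequality yields the generic Chernoff bound
\[
P(S_n \ge u) \le \exp\!\left(-\lambda u + \frac{\lambda^{2} v}{2(1-\lambda b/3)}\right)
\]
for every $u>0$ and $\lambda\in(0,3/b)$. To match the form asserted in the lemma I would set $u=\sqrt{2vt}+bt/3$ and pick
\[
\lambda = \frac{\sqrt{2t/v}}{1+(b/3)\sqrt{2t/v}} \in (0, 3/b).
\]
A short algebraic check shows that with this $\lambda$ one has $1-\lambda b/3 = 1/(1+(b/3)\sqrt{2t/v})$ and $\lambda = \sqrt{2t/v}\,(1-\lambda b/3)$, whence $\lambda u = t + t(1-\lambda b/3)$ and $\lambda^{2}v/(2(1-\lambda b/3)) = t(1-\lambda b/3)$. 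Subtracting, the Chernoff exponent is exactly $-t$, so $P(S_n \ge \sqrt{2vt}+bt/3)\le e^{-t}$.

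The two-sided statement follows by applying the same argument to $-X_1,\dots,-X_n$ (which meet the same hypotheses) and invoking a union bound to pick up the factor $2$. The only mildly delicate ingredients are the Taylor-based MGF bound in Step 1 and the specific choice of $\lambda$ in the optimization; neither is really an obstacle, as both are textbook computations. Since the lemma is quoted verbatim from Theorem~2.10 of \cite{Boucheron:etal:2013}, the cleanest ``proof'' in the paper is simply to cite that reference.
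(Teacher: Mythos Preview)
Your proposal is correct: the Cram\'er--Chernoff argument with the MGF bound $E[e^{\lambda X_i}]\le\exp\bigl(\lambda^2 E[X_i^2]/(2(1-\lambda b/3))\bigr)$, the geometric-series control via $(k+2)!\ge 2\cdot 3^k$, and your explicit choice of $\lambda$ all check out and deliver exactly the stated bound. The paper, however, does not prove this lemma at all---it simply cites Theorem~2.10 of \cite{Boucheron:etal:2013}, precisely as you anticipated in your final sentence. So your detailed derivation goes well beyond what the paper does; there is nothing to compare at the level of proof strategy since the paper offers none.
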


\begin{proof}[Proof of Theorem \ref{thm:asymnorm}]
Recall the definitions of $\overline{o}(\cdot)$ and $\overline{o}_{P}(\cdot)$. 
In this proof, we focus on the case where $\alpha < \gamma$. The proofs for the other cases are analogous. 
In addition, to simplify the notation, below we use $s$ in place of $s_0$ as the cardinality of $S=S(\vbeta_{0})$. 
We may assume without loss of generality that $S=\{ 1,\dots,s \}$. 
Then the likelihood function for $(\mu,\vbeta_{S})$ is 
\[
\begin{split}
&\prod_{1 \le i < j \le s} \left ( \frac{e^{\mu + \beta_{i}+\beta_{j}}}{1+e^{\mu + \beta_{i}+\beta_{j}}} \right)^{A_{ij}} \left (  \frac{1}{1+e^{\mu + \beta_{i}+\beta_{j}}} \right)^{1-A_{ij}} \\
&\quad \times \prod_{\substack{1 \le i \le s \\ s+1 \le j \le n}} \left ( \frac{e^{\mu + \beta_{i}}}{1+e^{\mu + \beta_{i}}} \right)^{A_{ij}} \left (  \frac{1}{1+e^{\mu + \beta_{i}}} \right)^{1-A_{ij}} 
\times \prod_{s+1 \le i < j \le n} \left ( \frac{e^{\mu}}{1+e^{\mu}} \right )^{A_{ij}} \left ( \frac{1}{1+e^{\mu}} \right)^{1-A_{ij}}. 
\end{split}
\]
The negative log-likelihood for $(\mu,\vbeta_{S})$  is 
\[
\begin{split}
\ell_{n}(\mu,\vbeta_{S}) &= -\mu \underbrace{\sum_{1 \le i<j \le n}A_{ij}}_{=d_{+}} - \underbrace{\sum_{1 \le i<j \le s} (\beta_{i} + \beta_{j} )A_{ij}}_{=\sum_{i=1}^{s} \beta_{i} \sum_{\substack{1 \le j \le s \\ j \ne i}} A_{ij}} -\sum_{i=1}^{s} \beta_{i} \sum_{j=s+1}^{n}A_{ij} + \binom{n-s}{2} \log (1+e^{\mu}) \\
&\quad  + (n-s) \sum_{i=1}^{s} \log (1+e^{\mu + \beta_{i}}) + \sum_{1 \le i < j \le s} \log (1+e^{\mu+\beta_{i}+\beta_{j}}) \\
&= -\mu d_{+} - \sum_{i=1}^{s} \beta_{i} d_{i} + \binom{n-s}{2} \log (1+e^{\mu})  + (n-s) \sum_{i=1}^{s} \log (1+e^{\mu + \beta_{i}}) \\
&\quad + \sum_{1 \le i < j \le s} \log (1+e^{\mu+\beta_{i}+\beta_{j}}).
\end{split}
\]
Recall the reparameterization $\mu =  - \gamma \log n + \mu^{\dagger}$ and $\beta_{i} =\alpha \log n + \beta_{i}^{\dagger}$. 

\textbf{Part (i)}. 
We first prove the uniform consistency of the MLE $(\hat{\mu}^{\dagger},\hat{\vbeta}_{S}^{\dagger})$ in the sense that $\hat{\mu}^{\dagger} = \mu^{\dagger}_{0} + \overline{o}_{P}(1)$ and $\max_{1 \le i \le s} | \hat{\beta}^{\dagger}_{i} - \beta^{\dagger}_{0i}| = \overline{o}_{P}(1)$. 
Consider the concentrated negative log-likelihood for $\mu^{\dagger}$:
\[
\begin{split}
\ell_{n}^{c} (\mu^{\dagger}) &= -\mu^{\dagger} d_{+} + \binom{n-s}{2} \log (1+n^{-\gamma}e^{\mu^{\dagger}})  + (n-s) \sum_{i=1}^{s} \log (1+n^{-(\gamma - \alpha)}e^{\mu^{\dagger} +\hat{\beta}_{i}^{\dagger}}) \\
&\quad + \sum_{1 \le i < j \le s} \log (1+n^{-(\gamma-2\alpha)}e^{\mu^{\dagger}+\hat{\beta}_{i}^{\dagger}+\hat{\beta}_{j}^{\dagger}}),
\end{split}
\]
which is minimized at $\mu^{\dagger} = \hat{\mu}^{\dagger}$ on $[-M_{1}^{\dagger},M_{1}^{\dagger}]$.
Since $\hat{\beta}_{i} \in [0,M_{2}^{\dagger}]$ for $i \in S$ and $M_{1}^{\dagger} \vee M_{2}^{\dagger} = o(\log n)$, we see that 
\[
\begin{split}
&\sup_{|\mu^{\dagger}| \le M_{1}^{\dagger}} \left | \ell_{n}^{c} (\mu^{\dagger})  - \left (- \mu^{\dagger} d_{+} + \binom{n-s}{2} \log (1+n^{-\gamma} e^{\mu^{\dagger}}) \right ) \right | \\
&\quad \le O(e^{o(\log n)} (sn^{1-(\gamma-\alpha)} + s^{2}n^{-(\gamma-2\alpha)})) = \overline{o}(n^{2-\gamma}).
\end{split}
\]
In addition, we have 
\[
\begin{split}
E[d_{+}] &= \sum_{1 \le i < j \le n} p_{ij} = \binom{n-s}{2} \frac{n^{-\gamma}e^{\mu^{\dagger}_{0}}}{1+n^{-\gamma} e^{\mu^{\dagger}_{0}}} + O(e^{o(\log n)} (sn^{1-(\gamma-\alpha)} + s^{2}n^{-(\gamma-2\alpha)})) \\
& =(n^{2-\gamma}/2) e^{\mu_{0}^{\dagger}} + \overline{o}(n^{2-\gamma}) \quad \text{and} \\
\Var (d_{+}) &= \sum_{i < j} p_{ij}(1-p_{ij}) = (n^{2-\gamma}/2) e^{\mu_{0}^{\dagger}} + \overline{o}(n^{2-\gamma}),
\end{split} 
\]
so that $d_{+} = (n^{2-\gamma}/2) e^{\mu_{0}^{\dagger}} +  \overline{o}_{P}(n^{2-\gamma})$. 
Conclude that 
\[
2n^{-2+\gamma} \ell_{n}^{c} (\mu^{\dagger}) =  -\mu^{\dagger} e^{\mu_{0}^{\dagger}} + e^{\mu^{\dagger}} + \overline{o}_{P}(1)
\]
uniformly in $| \mu^{\dagger} | \le M_{1}^{\dagger}$. We will show that $\hat{\mu}^{\dagger} = \mu^{\dagger}_{0} + \overline{o}_{P}(1)$ by mimicking the proof of Theorem 5.7 in \cite{vdV1998}. Some extra care is needed since $\mu_{0}^{\dagger}$ and $M_1^{\dagger}$ may grow with $n$. To this end, let $f(x) = -xe^{x_{0}} + e^{x}$ for $x \in \mathbb{R}$ where $x_0$ is given, and observe from the Taylor expansion that $f(x) = f(x_{0}) + (x-x_{0})^{2} e^{\tilde{x}}$ where $\tilde{x}$ is between $x$ and $x_{0}$. 
Thus, if $| \hat{\mu}^{\dagger} - \mu_{0}^{\dagger} | > \delta$ for some $\delta > 0$, then 
\[
2n^{-2+\gamma} \{ \ell_{n}^{c} (\hat{\mu}^{\dagger}) - \ell_{n}^{c}(\mu_{0}^{\dagger}) \} \ge e^{-M_{1}^{\dagger}}  \delta^{2} + \overline{o}_{P}(1),
\]
but the left hand side is nonpositive by the definition of the MLE. Conclude that 
\[
P(| \hat{\mu}^{\dagger} - \mu_{0}^{\dagger} | > \delta) \le P(e^{M_1^{\dagger}} \overline{o}_{P}(1) \ge \delta^{2}).
\]
This implies that $\hat{\mu}^{\dagger} = \mu_{0}^{\dagger} + \overline{o}_{P}(1)$.

Next, consider the concentrated negative log-likelihood for $\beta_{i}^{\dagger}$:
\[
\ell_{n}^{c} (\beta_{i}^{\dagger}) = -\beta_{i}^{\dagger} d_{i}  +  (n-s) \log (1+n^{-(\gamma - \alpha)}e^{\hat{\mu}^{\dagger} +\beta_{i}^{\dagger}})  + \sum_{\substack{1 \le j \le s \\ j \ne i}} \log (1+n^{-(\gamma-2\alpha)}e^{\hat{\mu}^{\dagger}+\beta_{i}^{\dagger}+\hat{\beta}_{j}^{\dagger}}),
\]
which is minimized at $\beta_{i}^{\dagger}  = \hat{\beta}_{i}^{\dagger}$ on $[0,M_{2}^{\dagger}]$. The last term on the right hand side is $O(e^{o(\log n)}sn^{-(\gamma-2\alpha)}) = \overline{o}(n^{1-(\gamma-\alpha)})$ uniformly in $| \beta_{i}^{\dagger} | \le M_{2}^{\dagger}$ and $1 \le i \le s$.
We note that 
\[
\begin{split}
E[d_{i}] &= \sum_{j \ne i} p_{ij} = \sum_{j > s} p_{ij} +  \underbrace{\sum_{\substack{1 \le j \le s \\ j \ne i}} p_{ij}}_{=O(sn^{-(\gamma-2\alpha)})} = n^{1-(\gamma-\alpha)} e^{\mu_{0}^{\dagger} + \beta_{0i}^{\dagger}} + \overline{o}(n^{1-(\gamma - \alpha)}), \\
\Var (d_{i}) &= \sum_{j \ne i} p_{ij} (1-p_{ij}) =n^{1-(\gamma-\alpha)} e^{\mu_{0}^{\dagger} + \beta_{0i}^{\dagger}} + \overline{o}(n^{1-(\gamma - \alpha)}),
\end{split}
\]
where the $\overline{o}$ terms are uniform in $1 \le i \le s$. 
Since $d_{i} = \sum_{j \ne i} A_{ij}$ is the sum independent random variables with $|A_{ij} - E[A_{ij}]| \le 1$, applying Bernstein's inequality (Lemma \ref{lem:Bernstein}) to $d_{i}$, we have 
\[
P\left (|d_{i} - E[d_{i}]| > \sqrt{2t\Var (d_{i})} + t/3 \right) \le 2e^{-t}
\]
for every $t > 0$. Choosing $t=2\log n$ and using the union bound, we have 
 \[
 \max_{1 \le i \le s}|d_{i} - E[d_{i}]| \le 2\sqrt{\max_{1 \le j \le s} \Var(d_{j}) \log n)} + 2(\log n)/3 
 \]
 with probability approaching one. Using the preceding evaluation of $\Var (d_{i})$, we have
 \[
  \max_{1 \le i \le s}|d_{i} - E[d_{i}]|  = O_{P}(e^{o(\log n)}n^{1/2-(\gamma-\alpha)/2}),
  \]
which implies that $d_{i} =  n^{1-(\gamma-\alpha)} e^{\mu_{0}^{\dagger} + \beta_{0i}^{\dagger}} + \overline{o}_{P}(n^{1-(\gamma - \alpha)})$ uniformly in $1 \le i \le s$. 
Together with the  consistency of $\hat{\mu}^{\dagger}$, we have
\[
n^{-1+(\gamma-\alpha)} \ell_{n}^{c}(\beta_{i}^{\dagger}) =\underbrace{ -\beta_{i}^{\dagger} e^{\mu_{0}^{\dagger} + \beta_{0i}^{\dagger}}  + e^{\mu_{0}^{\dagger} + \beta_{i}^{\dagger}} }_{= e^{\mu_{0}^{\dagger}} (-\beta_{i}^{\dagger} e^{\beta_{0i}^{\dagger}} + e^{\beta_{i}^{\dagger}})}+ \overline{o}_{P}(1)
\]
uniformly in $\beta_{i}^{\dagger} \in [0,M_{2}^{\dagger}]$ and $1 \le i \le s$.
Pick any $\delta > 0$. It is not difficult to show that 
\[
\min_{1 \le i \le s} \min_{|\beta_{i}^{\dagger} - \beta_{0i}^{\dagger}| > \delta} e^{\mu_{0}^{\dagger}} \{ -(\beta_{i}^{\dagger}-\beta_{0i}^{\dagger}) e^{\beta_{0i}^{\dagger}} + e^{\beta_{i}^{\dagger}} - e^{\beta_{0i}^{\dagger}} \} \ge e^{\mu_{0}^{\dagger}}\delta^{2} \ge e^{-M_{1}^{\dagger}} \delta^{2}. 
\]
Now, if $|\hat{\beta}_{i}^{\dagger} - \beta_{0i}^{\dagger}| > \delta$ for some $1 \le i \le s$, then 
\[
\begin{split}
&n^{-1+(\gamma-\alpha)} \ell_{n}^{c}(\hat{\beta}_{i}^{\dagger}) - n^{-1+(\gamma-\alpha)} \ell_{n}^{c}(\beta_{0i}^{\dagger}) \\
&\ge e^{-M_1^{\dagger}} \delta^{2}-  \underbrace{2 \max_{1 \le j \le s} \sup_{|\beta_{j}^{\dagger}| \le M_{2}^{\dagger}}\left | n^{-1+(\gamma-\alpha)} \ell_{n}^{c}(\beta_{j}^{\dagger}) -  e^{\mu_{0}^{\dagger}} (-\beta_{j}^{\dagger} e^{\beta_{0j}^{\dagger}} + e^{\beta_{j}^{\dagger}}) \right |}_{=\overline{o}_{P}(1)},
\end{split}
\]
but by the definition of the MLE, the left hand side is nonpositive.
Conclude that 
\[
P\left ( \max_{1 \le i \le s}| \hat{\beta}_{i}^{\dagger} - \beta_{0i}^{\dagger}| > \delta \right ) \le P(e^{M_{1}^{\dagger}}\overline{o}_{P}(1) \ge  \delta^{2}).
\]
This implies that $\max_{1 \le i \le s} |\hat{\beta}_{i}^{\dagger} - \beta_{0i}^{\dagger}| = \overline{o}_{P}(1)$. 

\textbf{Part (ii)}. 
Next, we will derive the limiting distribution of $(\hat{\mu}^{\dagger} - \mu_{0}^{\dagger},\hat{\vbeta}_{F}^{\dagger} - \vbeta_{0F}^{\dagger})$ for any fixed subset $F \subset S$. 
Since the true parameter vector $(\mu_{0}^{\dagger},\vbeta_{0S}^{\dagger})$ is bounded away from the boundary of the parameter space, the MLE satisfies the first order condition with probability approaching one by the uniform consistency. The first order condition is described as follows: 
\begin{equation}
\begin{split}
&-d_{+} + \binom{n-s}{2} \frac{n^{-\gamma}e^{\mu^{\dagger}}}{1+n^{-\gamma} e^{\mu^{\dagger}}} + (n-s) \sum_{i=1}^{s}  \frac{n^{-(\gamma-\alpha)}e^{\mu^{\dagger}+\beta_{i}^{\dagger}}}{1+n^{-(\gamma-\alpha)} e^{\mu^{\dagger}+\beta_{i}^{\dagger}}}  \\
&\qquad \qquad+  \sum_{1 \le i < j \le s}  \frac{n^{-(\gamma-2\alpha)}e^{\mu^{\dagger}+\beta_{i}^{\dagger}+\beta_{j}^{\dagger}}}{1+n^{-(\gamma-2\alpha)} e^{\mu^{\dagger}+\beta_{i}^{\dagger}+\beta_{j}^{\dagger}}} = 0, \\
&-d_{i} +  (n-s) \frac{n^{-(\gamma-\alpha)}e^{\mu^{\dagger}+\beta_{i}^{\dagger}}}{1+n^{-(\gamma-\alpha)} e^{\mu^{\dagger}+\beta_{i}^{\dagger}}} +  \sum_{\substack{1 \le j \le s \\ j \ne i}} \frac{n^{-(\gamma-2\alpha)}e^{\mu^{\dagger}+\beta_{i}^{\dagger}+\beta_{j}^{\dagger}}}{1+n^{-(\gamma-2\alpha)} e^{\mu^{\dagger}+\beta_{i}^{\dagger}+\beta_{j}^{\dagger}}} = 0, \ i \in S.
\end{split}
\label{eq: FOC}
\end{equation}
The left hand sides are $-d_{+} + E[d_{+}]$ and $-\vd_{S} + E[\vd_{S}]$ at $(\mu^{\dagger},\vbeta_{S}^{\dagger}) = (\mu_{0}^{\dagger},\vbeta_{0S}^{\dagger})$, where $\vd_{S} = (d_{1},\dots,d_{s})^{T}$. 
We will derive the joint limiting distribution for $(d_{+} - E[d_{+}],\vd_{F} - E[\vd_{F}])$. 
Decompose $d_{+}$ as 
\[
d_{+} = \sum_{s < i < j \le n} A_{ij} + \sum_{\substack{1 \le i \le s \\ s < j \le n}} A_{ij} + \sum_{1 \le i < j \le s} A_{ij}. 
\]
The variance of the first term on the right hand side is $(n^{2-\gamma}/2) e^{\mu_{0}^{\dagger}} + \overline{o}(n^{2-\gamma})$, while the variances of the last two terms are $\overline{o}(n^{2-\gamma})$. Hence we have 
\[
d_{+} - E[d_{+}] = \sum_{s <  i < j \le n} (A_{ij} -p_{ij}) + \overline{o}_{P}(n^{1-\gamma/2}) \quad \text{and} \quad n^{-1+\gamma/2} \sum_{s < i < j \le n} \frac{A_{ij} -p_{ij}}{ (e^{\mu_{0}^{\dagger}}/2)^{1/2}} \stackrel{d}{\to} N(0,1).
\]
On the other hand, for $i,j \in S$, we have 
\[
\begin{split}
\Var (d_{i}) &= \sum_{k \ne i} p_{ik} (1-p_{ik}) = n^{1-(\gamma-\alpha)} e^{\mu_{0}^{\dagger} + \beta_{0i}^{\dagger}} + \overline{o}(n^{1-(\gamma-\alpha)}), \\
\Cov (d_{i},d_{j}) &= p_{ij} (1-p_{ij}) = \overline{o}(n^{1-(\gamma-\alpha)}),
\end{split}
\]
so that we have
\[
n^{-1/2+(\gamma - \alpha)/2} \Lambda_{F}^{-1/2}(\vd_{F} - E[\vd_{F}]) \stackrel{d}{\to} N(\bm{0},I_{|F|}),
\]
where $\Lambda_{F} = \diag \{ e^{\mu_{0}^{\dagger} + \beta_{0i}^{\dagger}} : i \in F \}$. 
Since $\sum_{s+1 \le i < j \le n}A_{ij}$ and $\vd_{F}$ are independent, we conclude that 
\begin{equation}
\begin{pmatrix}
n^{-1+\gamma/2} (e^{\mu_{0}^{\dagger}}/2)^{-1/2} (d_{+} - E[d_{+}]) \\
n^{-1/2+(\gamma - \alpha)/2}\Lambda_{F}^{-1/2} (\vd_{F} - E[\vd_{F}])
\end{pmatrix}
\stackrel{d}{\to} 
N \left (\bm{0}, 
I_{1+|F|}
 \right ).
 \label{eq: asymptotic normality}
\end{equation}

Let $\hat{\delta} = \max_{1 \le i \le s} |\hat{\beta}_{i}^{\dagger} - \beta_{0i}^{\dagger}|$.  Applying the Taylor expansion
 to the first equation in (\ref{eq: FOC}), we have 
 \begin{equation}
- d_{+} + E[d_{+}] + (n^{2-\gamma}/2)(e^{\mu_{0}^{\dagger}} + \overline{o}_{P}(1))(\hat{\mu}^{\dagger} - \mu_{0}^{\dagger}) + O_{P}(e^{o(\log n)}sn^{1-(\gamma-\alpha)} \hat{\delta}) = 0.
\label{eq: FOC_mu}
\end{equation}
In particular, this implies that 
\[
\hat{\mu}^{\dagger} - \mu_{0}^{\dagger} = O_{P}(e^{o(\log n)} (n^{-1+\gamma/2} +sn^{-1+\alpha} \hat{\delta})).
\]
Likewise, applying the Taylor expansion
 to the second equation in (\ref{eq: FOC}), we have 
 \begin{equation}
 \begin{split}
 -d_{i} + E[d_{i}] + n^{1-(\gamma-\alpha)}( e^{\mu_{0}^{\dagger} + \beta_{0i}^{\dagger}} + &\overline{o}_{P}(1)) \Big \{ \hat{\beta}_{i}^{\dagger} - \beta_{0i}^{\dagger} \\
&\quad  +O_{P}(e^{o(\log n)}n^{-1+\gamma/2})+ \underbrace{O_{P}(e^{o(\log n)}sn^{-1+\alpha} \hat{\delta})}_{=o_{P}(\hat{\delta})}  \Big \} = 0
 \end{split}
 \label{eq: FOC_beta}
 \end{equation}
 uniformly in $1 \le i \le s$. Since $\max_{1 \le i \le s}|d_{i} - E[d_{i}]| = O_{P}(e^{o(\log n)}n^{1/2-(\gamma-\alpha)/2})$, we have 
 \[
 \hat{\delta} =  O_{P}(e^{o(\log n)}n^{-1/2+(\gamma-\alpha)/2}).
 \]
 Plugging this evaluation into (\ref{eq: FOC_beta}), we have
 \begin{equation}
 n^{1/2-(\gamma-\alpha)/2} (\hat{\beta}_{i}^{\dagger} - \beta_{0i}^{\dagger}) = n^{-1/2+(\gamma-\alpha)/2}  e^{-\mu_{0}^{\dagger} -\beta_{0i}^{\dagger}}(d_{i} - E[d_{i}])+\underbrace{O_{P}(e^{o(\log n)}sn^{-1+\alpha})}_{=\overline{o}_{P}(1)}
 \label{eq: Bahadur_beta}
 \end{equation}
 uniformly in $1 \le i \le s$. 
 Likewise, plugging the preceding evaluation of $\hat{\delta}$ into (\ref{eq: FOC_mu}), we have 
 \[
 - d_{+} + E[d_{+}] + (n^{2-\gamma}/2)(e^{\mu_{0}^{\dagger}} + \overline{o}_{P}(1))(\hat{\mu}^{\dagger} - \mu_{0}^{\dagger}) + O_{P}(e^{o(\log n)}sn^{1/2-(\gamma-\alpha)/2}) = 0
 \]
and the last term on the left hand side is $\overline{o}_{P}(n^{1-\gamma/2})$ under our assumption that $s = \overline{o}(n^{(1-\alpha)/2})$.
Hence we have 
\begin{equation}
n^{1-\gamma/2}(\hat{\mu}^{\dagger} - \mu_{0}^{\dagger}) = 2e^{-\mu_{0}^\dagger}n^{-1+\gamma/2}(d_{+} - E[d_{+}]) + \overline{o}_{P}(1). 
\label{eq: Bahadur_mu}
\end{equation}
The desired conclusion follows from combining the expansions (\ref{eq: Bahadur_mu}) and (\ref{eq: Bahadur_beta}) with (\ref{eq: asymptotic normality}). 
\end{proof}

\subsection{Proofs for Section \ref{section:unknown}}
\label{sec:proof unknown}

\begin{proof}[Proof of Lemma \ref{lem: sorting}]
In this proof, we omit the argument $s$ and write $(\hat{\mu}(s),\hat{\vbeta}(s)) = (\hat{\mu},\hat{\vbeta})$. 

\textbf{Part (i)}. 
Suppose on the contrary that there exist $i$ and $j$ such that $d_i< d_j$ but $\hat\beta_i > \hat\beta_j$. Define $\tilde\vbeta$ by 
\[
\tilde{\beta}_{k} = 
\begin{cases}
\hat\beta_k & \text{if} \ k \neq i,j \\
\hat{\beta}_{j} & \text{if} \ k=i \\
\hat{\beta}_{i} & \text{if} \ k=j
\end{cases}
.
\]
Now, since $-(d_i \hat\beta_j + d_j \hat\beta_i)<-(d_i \hat\beta_i + d_j \hat\beta_j)$,
we have $\ell_n(\hat\mu,\tilde\vbeta)< \ell_n(\hat\mu,\hat\vbeta)$, 
which contradicts the fact that $(\hat\mu,\hat\vbeta)$ is an optimal solution to (\ref{eq:ple}).

\textbf{Part (ii)}. Suppose on the contrary  that there exist $i$ and $j$ such that $d_i = d_j$ but $\hat\beta_i \not= \hat\beta_j$. 
Define $\tilde\vbeta$ by 
\[
\tilde\beta_k=
\begin{cases}
\hat\beta_k  & \text{if} \ k \neq i,j \\
(\hat\beta_i+\hat\beta_j)/2 & \text{if} \ k=i,j
\end{cases}
.
\]
It is not difficult to see that $\tilde\vbeta  \in \mathbb{R}^n_+$ and $ \|\tilde\vbeta\|_0 \le s$ if $s=\sum_{k=1}^K s_k$ for some $K\le m-1$. Since for any $k \neq i,j$,
\[ 
2\log\left(1+e^{\mu+(\beta_i+\beta_j)/2+\beta_k}\right)<\log\left(1+e^{\mu+\beta_i+\beta_k}\right)+\log\left(1+e^{\mu+\beta_j+\beta_k}\right),
\]
we have $\ell_n(\hat\mu,\tilde\vbeta)< \ell_n(\hat\mu,\hat\vbeta)$, which contradicts the fact that $(\hat\mu,\hat\vbeta)$ is an optimal solution to (\ref{eq:ple}). 
\end{proof}

The proof of Lemma \ref{lem: beta min} relies on Hoeffding's inequality; for the reader's convenience, we state it as the following lemma. For its proof, see, e.g., Theorem 2.8 in \cite{Boucheron:etal:2013}. 

\begin{lemma}[Hoeffding's inequality]
\label{lem: Hoeffding}
Let  $X_{1},\dots,X_{n}$ be independent random variables such that each $X_{i}$ takes values in $[a_i,b_i]$ for some $-\infty  < a_i < b_i < \infty$. Then 
\[
P \left ( \sum_{i=1}^{n} (X_{i} - E[X_{i}]) > t \right) \le \exp \left \{ -\frac{2t^{2}}{\sum_{i=1}^{n} (b_i - a_i)^{2}} \right \}
\]
for every $t > 0$.
\end{lemma}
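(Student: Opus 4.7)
My plan is to prove this by the standard exponential moment (Chernoff) argument, reducing the tail bound for the sum to a uniform moment generating function (MGF) bound for each centered bounded summand. The two ingredients are Markov's inequality applied to $e^{\lambda S_n}$ with $S_n = \sum_{i=1}^n (X_i - E[X_i])$, and a single-variable lemma (Hoeffding's lemma) controlling $E[e^{\lambda Y}]$ when $Y$ is bounded and mean zero. I keep $\lambda > 0$ as a free parameter to be optimized at the end. First, by Markov's inequality and independence of the $X_i$'s,
\[
P(S_n > t) = P(e^{\lambda S_n} > e^{\lambda t}) \le e^{-\lambda t} \prod_{i=1}^n E[e^{\lambda (X_i - E[X_i])}].
\]

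Next, I would establish Hoeffding's lemma: if $Y \in [a,b]$ almost surely and $E[Y]=0$, then $E[e^{\lambda Y}] \le \exp(\lambda^2 (b-a)^2/8)$ for all $\lambda \in \mathbb{R}$. The standard argument uses convexity of $y \mapsto e^{\lambda y}$ to dominate it on $[a,b]$ by the secant line $\frac{b-y}{b-a}e^{\lambda a} + \frac{y-a}{b-a}e^{\lambda b}$; taking expectations (using $E[Y]=0$) reduces the problem to bounding $\psi(\lambda) := \log\bigl((1-p)e^{-\lambda p(b-a)} + pe^{\lambda(1-p)(b-a)}\bigr)$, where $p = -a/(b-a) \in [0,1]$. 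A direct computation shows $\psi(0) = \psi'(0) = 0$ and $\psi''(\lambda) \le (b-a)^2/4$, so a second-order Taylor expansion yields $\psi(\lambda) \le \lambda^2(b-a)^2/8$. Applying the lemma to each $X_i - E[X_i] \in [a_i - E[X_i], b_i - E[X_i]]$ (whose interval length is $b_i - a_i$) and multiplying gives
\[
\prod_{i=1}^n E[e^{\lambda (X_i - E[X_i])}] \le \exp\!\left(\frac{\lambda^2}{8} \sum_{i=1}^n (b_i - a_i)^2\right).
\]

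Combining the two displays produces the bound $\exp\bigl(-\lambda t + \lambda^2 \sum_i (b_i - a_i)^2/8\bigr)$ for every $\lambda > 0$. The right-hand side is a quadratic in $\lambda$ with unique minimizer $\lambda^{\star} = 4t / \sum_i (b_i - a_i)^2 > 0$; substituting $\lambda^{\star}$ yields the advertised exponent $-2t^2/\sum_i (b_i - a_i)^2$. The main delicate step, and the only one that is not a one-line manipulation, is the verification of Hoeffding's lemma—specifically, the uniform second-derivative bound $\psi''(\lambda) \le (b-a)^2/4$. This is most cleanly obtained by recognizing $\psi''(\lambda)$ as $(b-a)^2$ times the variance of a Bernoulli random variable under a tilted probability measure, hence of the form $(b-a)^2 q(1-q)$ for some $q \in [0,1]$, which is at most $(b-a)^2/4$. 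Everything else (Markov's inequality, factorization via independence, and the one-dimensional quadratic optimization) is routine.
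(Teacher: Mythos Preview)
Your argument is correct and is in fact the standard proof of Hoeffding's inequality: Chernoff's method plus Hoeffding's lemma, followed by optimization in $\lambda$. Each step you outline is valid, including the identification of $\psi''(\lambda)$ with $(b-a)^2$ times a Bernoulli variance under the tilted measure.

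The paper, however, does not prove this lemma at all; it simply states it for the reader's convenience and cites Theorem~2.8 of Boucheron, Lugosi, and Massart (2013). So there is no proof in the paper to compare against---your proposal supplies strictly more than the paper does. The route you take is exactly the one in the cited reference, so in that sense your approach and the (external) proof the paper points to coincide.
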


\begin{proof}[Proof of Lemma \ref{lem: beta min}]
For the sake of notational convenience, we use  $(\mu,\vbeta)$ for $(\mu_{0},\vbeta_0)$. 
Recall that $i \in S$ and $j \in S^{c}$, i.e., $\beta_{i} \ne 0$ and $\beta_{j} = 0$. 
Observe that 
\[
d_{i}  =\sum_{k \neq i} A_{ik}=\sum_{k \neq i,j} A_{ik}+A_{ij}, \quad d_{j}  =\sum_{k \neq j} A_{jk}=\sum_{k \neq i,j} A_{jk}+A_{ij},
\]
where 
\[
A_{ik} \sim Ber \left ( \frac{e^{\mu+\beta_{i}+\beta_{k}}}{1+e^{\mu+\beta_{i}+\beta_{k}}} \right ), \quad A_{jk}  \sim Ber \left(  \frac{e^{\mu+\beta_{k}}}{1+e^{\mu+\beta_{k}}} \right).
\]
Then, 
\begin{align*}
d_{i}-d_{j} & =\sum_{k \neq i,j} (A_{ik}-A_{jk})  =\sum_{k \neq i,j} (A_{ik}-E[A_{ik}])-\sum_{k \neq i,j} (A_{jk}-E[A_{jk}])+\sum_{k \neq i,j} (E[A_{ik}]-E[A_{jk}])\\
 & =\sum_{k \neq i,j} (A_{ik}-E[A_{ik}])-\sum_{k \neq i,j} (A_{jk}-E[A_{jk}])+\sum_{k \neq i,j} \left ( \frac{e^{\mu+\beta_{i}+\beta_{k}}}{1+e^{\mu+\beta_{i}+\beta_{k}}} - \frac{e^{\mu+\beta_{k}}}{1+e^{\mu+\beta_{k}}}  \right ).
\end{align*}
Define 
\[
\varepsilon_{ij} = \min_{k \ne i,j} \left ( \frac{e^{\mu+\beta_{i}+\beta_{k}}}{1+e^{\mu+\beta_{i}+\beta_{k}}} - \frac{e^{\mu+\beta_{k}}}{1+e^{\mu+\beta_{k}}} \right ) > 0,
\]
and observe that 
\[
d_{i} - d_{j} \ge \sum_{k \neq i,j} (A_{ik}-E[A_{ik}])-\sum_{k \neq i,j} (A_{jk}-E[A_{jk}]) + (n-2) \varepsilon_{ij}.
\]
Now, by Hoeffding's inequality (Lemma \ref{lem: Hoeffding}), for every $t > 0$,
\[
P \left (\sum_{k \neq i,j} (A_{ik}-E[A_{ik}]) < -t \right ) \le e^{-2t^{2}/(n-2)},
\]
and so with probability at least $1-\tau/2$, 
\[
\sum_{k \neq i,j} (A_{ik}-E[A_{ik}]) \ge -\sqrt{\frac{(n-2)}{2} \log (2/\tau)}.
\]
Likewise, with probability at least $1-\tau/2$, 
\[
\sum_{k \neq i,j} (A_{jk}-E[A_{jk}])  \le \sqrt{\frac{(n-2)}{2} \log (2/\tau)}.
\]
Hence, with probability at least $1-\tau$, 
\[
d_{i} - d_{j} \ge (n-2) \Big \{ -\underbrace{\sqrt{\frac{2}{n-2}\log(2/\tau)}}_{= c_{n,\tau}} + \varepsilon_{ij} \Big \}.
\]
Next, we establish a lower bound on $\varepsilon_{ij}$. Observe that 
\[
\begin{split}
\frac{e^{\mu+\beta_{i}+\beta_{k}}}{1+e^{\mu+\beta_{i}+\beta_{k}}} - \frac{e^{\mu+\beta_{k}}}{1+e^{\mu+\beta_{k}}}  = \frac{e^{\mu+\beta_{k}}}{1+e^{\mu+\beta_{k}}} \cdot \frac{e^{\beta_{i}}-1}{1+e^{\mu+\beta_{i}+\beta_{k}}}  
 \ge \frac{e^{-\mu^{-}}}{1+e^{-\mu^{-}}} \cdot \frac{e^{\beta_{i}}-1}{1+e^{2\overline{\beta}+\mu^{+}}}, 
\end{split}
\]
so that 
\[
\varepsilon_{ij} \ge \frac{1}{1+e^{\mu^{-}}} \cdot \frac{e^{\beta_{i}}-1}{1+e^{2\overline{\beta}+\mu^{+}}}.
\]
The right hand side is larger than $c_{n,\tau}$ under Condition (\ref{eq: betamin}). This completes the proof. 
\end{proof}

\begin{proof}[Proof of Theorem \ref{thm: risk bound}]
For the sake of notational simplicity, we will write $(\hat{\mu}(s),\hat{\vbeta}(s)) = (\hat\mu,\hat\vbeta)$. 
We begin with noting that 
\begin{equation}
\label{eq: step1}
\begin{split}
\mathcal{E}_{s} &\le \mathcal{R} (\hat\mu,\hat\vbeta) - \inf_{(\mu,\vbeta) \in \Theta_{s}} D_{+}^{-1}\ell_{n}(\mu,\vbeta) + \sup_{(\mu,\vbeta) \in \Theta_{s}} | D_{+}^{-1}\ell_{n}(\mu,\vbeta) - \mathcal{R}(\mu,\vbeta)| \\
&=\mathcal{R} (\hat\mu,\hat\vbeta) - D_{+}^{-1}\ell_{n}(\hat\mu,\hat\vbeta) + \sup_{(\mu,\vbeta) \in \Theta_{s}} | D_{+}^{-1}\ell_{n}(\mu,\vbeta) - \mathcal{R}(\mu,\vbeta)| \\
&\le 2 \sup_{(\mu,\vbeta) \in \Theta_s} | D_{+}^{-1}\ell_{n}(\mu,\vbeta) - \mathcal{R}(\mu,\vbeta)|.
\end{split}
\end{equation}
Next, observe that 
\begin{equation}
\label{eq: step2}
\begin{split}
|D_{+}^{-1}\ell_{n}(\mu,\vbeta) - \mathcal{R}(\mu,\vbeta) | &\le  D_{+}^{-1} |\mu| \left |  d_{+} - E[d_{+}] \right | + D_{+}^{-1}\left | \sum_{i=1}^{n} \beta_i (d_i - E[d_i]) \right |  \\
&\le D_{+}^{-1} \left ( M_2 \left |    d_{+} - E[d_{+}] \right |  + M_1 s \max_{1 \le i \le n} | d_{i} - E[d_{i}] |   \right )
\end{split}
\end{equation}
for $(\mu,\vbeta) \in \Theta_{s}$ where we have used the fact that $\sum_{i=1}^{n} \beta_i \le M_1 s$. 
Now, using Bernstein's inequality (Lemma \ref{lem:Bernstein}) and the union bound, we have
\begin{equation}
\label{eq: step3}
 \max_{1 \le i \le n} | d_{i} - E[d_{i}] | \le \sqrt{2\max_{1 \le j \le n} \Var(d_{j}) \log (4n/\tau)} + (\log (4n/\tau))/3
\end{equation}
with probability at least $1-\tau/2$. Likewise, by Bernstein's inequality, we have 
\begin{equation}
\label{eq: step4}
\left |  d_{+} - E[d_{+}] \right | \le \sqrt{2\Var(d_{+}) \log (4/\tau)} + (\log (4/\tau))/3
\end{equation}
with probability at least $1-\tau/2$. Combining (\ref{eq: step1})--(\ref{eq: step4}), we obtain the bound (\ref{eq:generic bound}). 

Finally, if $\mu_{0} = -\gamma \log n+ O(1)$ and $\beta_{0i} = \alpha \log n + O(1)$ for $i \in S(\vbeta_{0})$, then from the proof of Theorem \ref{thm:asymnorm}, we know that $D_{+} \sim n^{2-\gamma}$, $\Var (d_{+}) \sim n^{2-\gamma}$, and $\max_{1 \le i \le n} \Var (d_{i}) \sim n^{1-(\gamma-\alpha)}$. This leads to the second bound (\ref{eq:risk bound}). 
\end{proof}

\section{Existence of $\ell_{0}$-constrained MLE}
\label{sec: MLE existence}

In this appendix, we discuss the existence of the $\ell_{0}$-constrained MLE (\ref{eq:ple}) under the unrestricted parameter space. The aim of this appendix is to derive an analogous result to Theorem 3.1 in \cite{Rinaldo:etal:2013} on the existence of the unconstrained MLE for the $\beta$-model. We first note that the optimization problem (\ref{eq:ple}) can be split into two parts. 

Part 1. For each given $S \subset \{ 1,\dots, n \}$ with $|S| \le s$, find the support-constrained MLE 
\begin{equation}
(\hat{\mu}^{S},\hat{\bm{\beta}}^{S}) = \argmin_{\substack{\mu \in \mR, \bm{\beta} \in \mR_{+}^{n} \\ \supp (\bm{\beta}) = S}} \ell_{n}(\mu,\bm{\beta}).
\label{eq:sple}
\end{equation}

Part 2. Find $S$ that minimizes the negative log-likelhood at $(\hat{\mu}^{S},\hat{\vbeta}^{S})$:
\[
\hat{S}(s) = \argmin_{S \subset \{ 1,\dots,n \} , |S| \le s} \ell_{n}(\hat{\mu}^{S},\hat{\vbeta}^{S}). 
\]
Then, we have $(\hat{\mu}(s),\hat{\vbeta}(s)) = (\hat{\mu}^{\hat{S}(s)},\hat{\vbeta}^{\hat{S}(s)})$. 

Part 2 is comparing the negative log-likelihoods over a finite number of competitors, so that the existence of a solution to Part 2 is always guaranteed. Thus, we focus on finding conditions under which the support-constraint MLE (\ref{eq:sple}) exists with given $S$. Fix $S \subset \{ 1,\dots, n \}$ with $|S| \le s$, and think of the parameter vector as $(\mu,\vbeta_{S}) \in \mR \times \mR_{+}^{|S|}$. The negative log-likelihood when the support of $\vbeta$ is restricted to $S$ is given by 
\[
\ell_{n}^{S} (\mu,\vbeta_{S}) =  -\mu d_{+} - \sum_{i \in S} \beta_{i} d_{i} + \psi_{n}^{S} (\mu,\vbeta_{S}),
\]
where $\psi_{n}^{S} (\mu,\vbeta_{S}) = \binom{n-|S|}{2} \log (1+e^{\mu}) + (n-|S|) \sum_{i \in S} \log (1+e^{\mu + \beta_{i}}) + \sum_{i,j \in S, i < j} \log (1+e^{\mu + \beta_{i} + \beta_{j}})$. 
The corresponding probability mass function belongs to an exponential family, and we can use e.g. Theorem 5.7 in \cite{Brown1986} to derive conditions under which a solution to (\ref{eq:sple}) exists (note that the parameter space for $\vbeta_{S}$ is restricted to the positive orthant). We may write the vector of sufficient statistics $T=\binom{d_{+}}{(d_{i})_{i \in S}} \in \mR^{1+|S|}$ as a function of $(A_{i,j})_{i<j} = (a_{i,j})_{i < j}$;
\[
T(\bm{a}) = B \bm{a}, \ \bm{a} = (a_{i,j})_{i < j}  \in \{ 0,1 \}^{\binom{n}{2}} =: \mathcal{S}_{n}
\]
for some $(1+|S|) \times \binom{n}{2}$-matrix $B$. For example, if $n=3$ and $S = \{ 1,2 \}$, we have 
\[
\begin{pmatrix}
d_{+} \\
d_{1} \\
d_{2}
\end{pmatrix}
=
\underbrace{
\begin{pmatrix}
1 & 1 & 1 \\
1 & 1 & 0 \\
1 & 0 & 1
\end{pmatrix}
}_{=B}
\begin{pmatrix}
a_{12} \\
a_{13} \\
a_{23}
\end{pmatrix}
.
\]
Then, Theorem 5.7 in \cite{Brown1986} leads to the following lemma. 
\begin{lemma}
If $T=\binom{d_{+}}{(d_{i})_{i \in S}}$ lies in the interior of the convex hull of $\{ B \bm{a} : \bm{a} \in \mathcal{S}_{n} \}$, then a solution to (\ref{eq:sple}) exists. 
\end{lemma}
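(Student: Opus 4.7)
The plan is to recognize the support-constrained model as a minimal regular exponential family and invoke Theorem 5.7 of \cite{Brown1986}, together with a brief convex-optimization argument to accommodate the positivity restriction on $\vbeta_S$.

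First, I would put the negative log-likelihood in canonical form. With $\theta = (\mu, \vbeta_S)^T \in \mR^{1+|S|}$ playing the role of the natural parameter and $T = (d_+, (d_i)_{i \in S})^T$ that of the sufficient statistic (so $T = B\bm{a}$ for the matrix $B$ identified in the excerpt), we have $\ell_n^S(\theta) = -\langle \theta, T \rangle + \psi_n^S(\theta)$, where the cumulant function $\psi_n^S$ is smooth and strictly convex on $\mR^{1+|S|}$, with gradient $\nabla \psi_n^S(\theta) = E_\theta[T]$. Strict convexity is a consequence of minimality of the parametrization, which follows from the fact that the $\binom{n}{2}$ Bernoulli variables $A_{ij}$ are independent with probabilities lying in $(0,1)$.

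Next, I would apply Theorem 5.7 of \cite{Brown1986}: under the hypothesis that $T$ lies in the interior of $C := \mathrm{conv}\{B\bm{a}: \bm{a} \in \mathcal{S}_n\}$, the score equation $\nabla \psi_n^S(\theta) = T$ has a unique solution $\tilde{\theta}$ in the full natural parameter space $\mR^{1+|S|}$. Equivalently, the interior-hull condition forces coercivity of $\ell_n^S$ on $\mR^{1+|S|}$: along any nonzero direction $v$, if $\ell_n^S(\tilde{\theta}+tv)$ were bounded above as $t \to \infty$, then by the standard recession argument for log-partition functions we would have $\langle v, T \rangle \geq \sup_{\bm{a} \in \mathcal{S}_n} \langle v, B\bm{a} \rangle$, i.e., $T$ would lie on a supporting hyperplane of $C$, contradicting $T \in \mathrm{int}(C)$.

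Finally, to extract a solution of (\ref{eq:sple}), I would restrict the minimization to the closed convex subset $\mR \times \mR_+^{|S|}$ of $\mR^{1+|S|}$. Since $\ell_n^S$ is continuous, convex, and coercive on $\mR^{1+|S|}$, its sublevel sets are compact; hence its minimum over any nonempty closed subset (in particular, over $\mR \times \mR_+^{|S|}$) is attained, which yields existence for (\ref{eq:sple}). The main obstacle is the coercivity claim, which is the standard recession-cone argument for minimal exponential families sketched above; once that is in hand, the positivity constraint introduces no further difficulty, as it only shrinks the feasible set without destroying coercivity or continuity.
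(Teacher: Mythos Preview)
Your proposal is correct and follows the same route as the paper, which simply notes that the support-constrained model is an exponential family and invokes Theorem~5.7 of \cite{Brown1986}. Your write-up is more detailed: the paper only remarks parenthetically that the parameter space for $\vbeta_S$ is restricted to the positive orthant, whereas you make explicit the coercivity argument (interior-of-hull $\Rightarrow$ compact sublevel sets $\Rightarrow$ existence over any closed subset) that justifies passing from the unconstrained to the constrained problem.
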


\newpage
\section{Additional Simulations}\label{Appendix-Section-Simulation}

\begin{figure}[hbt!]
\begin{minipage}{\textwidth}
\centering

\begin{subfigure}[b]{0.3\textwidth}
     \caption[ ]
        {{\footnotesize $\beta_{0i} = 1.5, \mu_{0} = - 1.5$}}
 	\centering
	\includegraphics[width=1.1\textwidth, height = \textwidth]{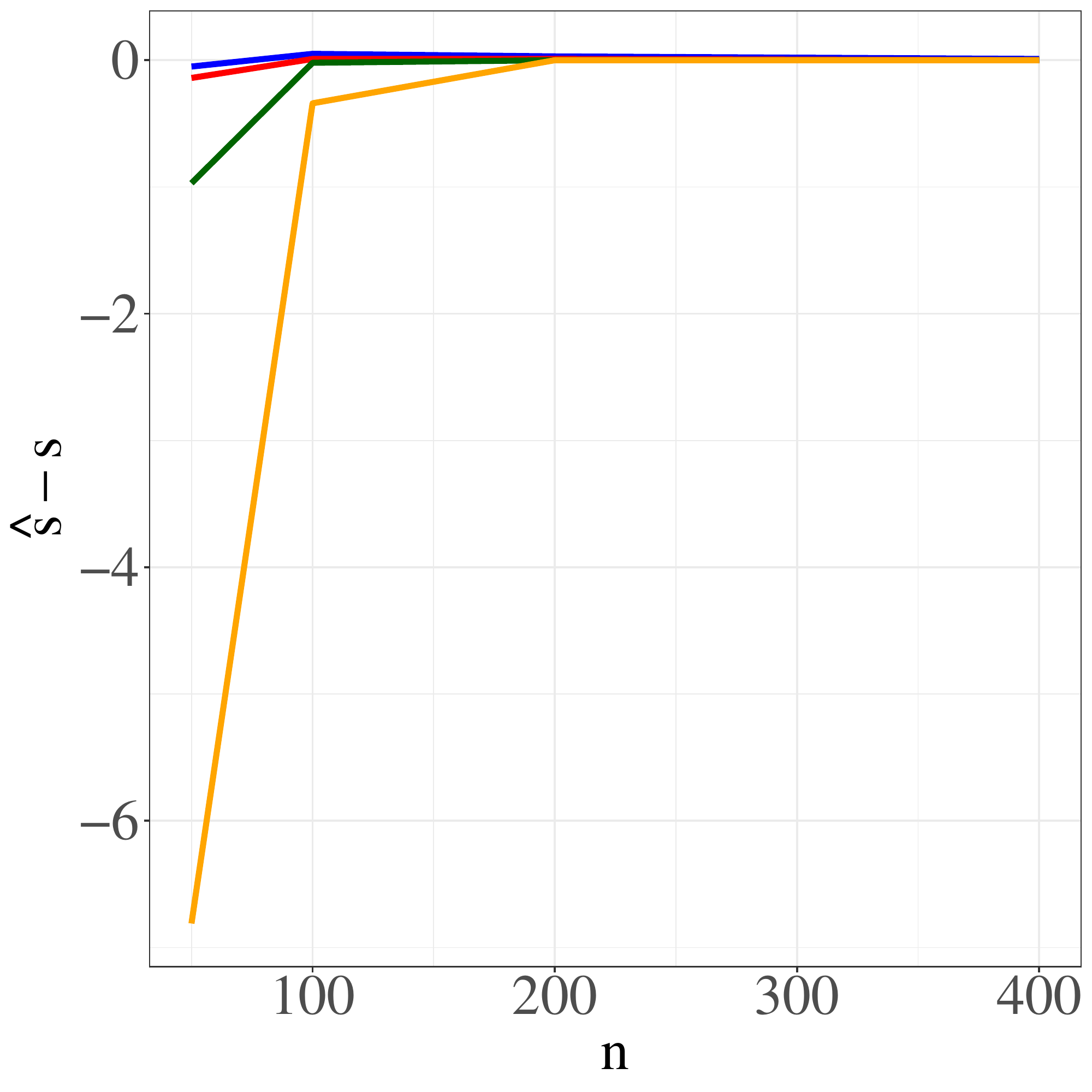}
\end{subfigure}
 \hfill \hfill \hfill \hspace{-5mm} 
\begin{subfigure}[b]{0.3\textwidth}
   \caption[]
        {{\footnotesize $\beta_{0i} = \sqrt{\log n}, \mu_{0} = - 1.5$}}
 	\centering
	\includegraphics[width=\textwidth]{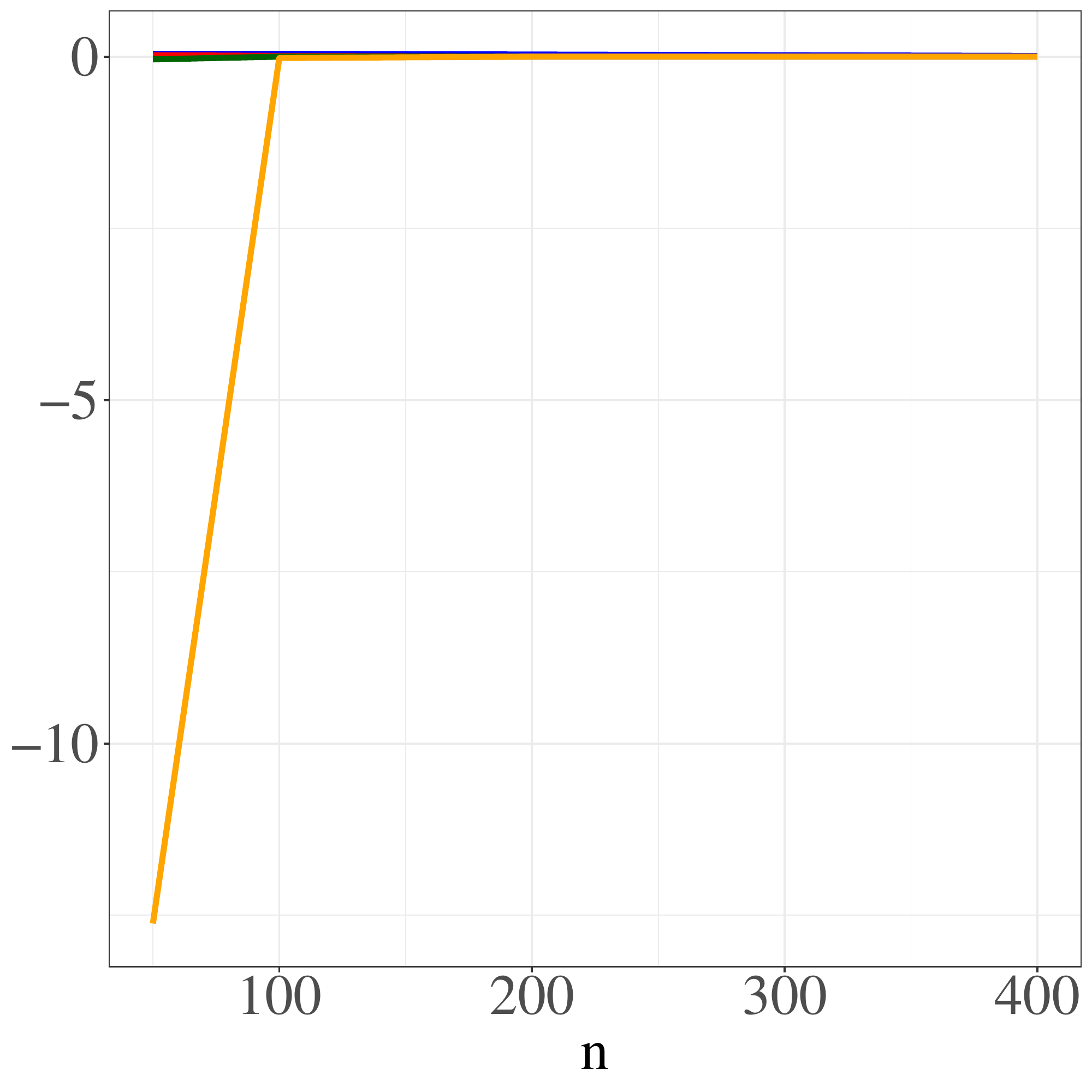}
\end{subfigure}
\hfill  
\begin{subfigure}[b]{0.3\textwidth}
   \caption[]
        {{\footnotesize $\beta_{0i} = \log n, \mu_{0} = - 1.5$}}
 	\centering
	\includegraphics[width=\textwidth]{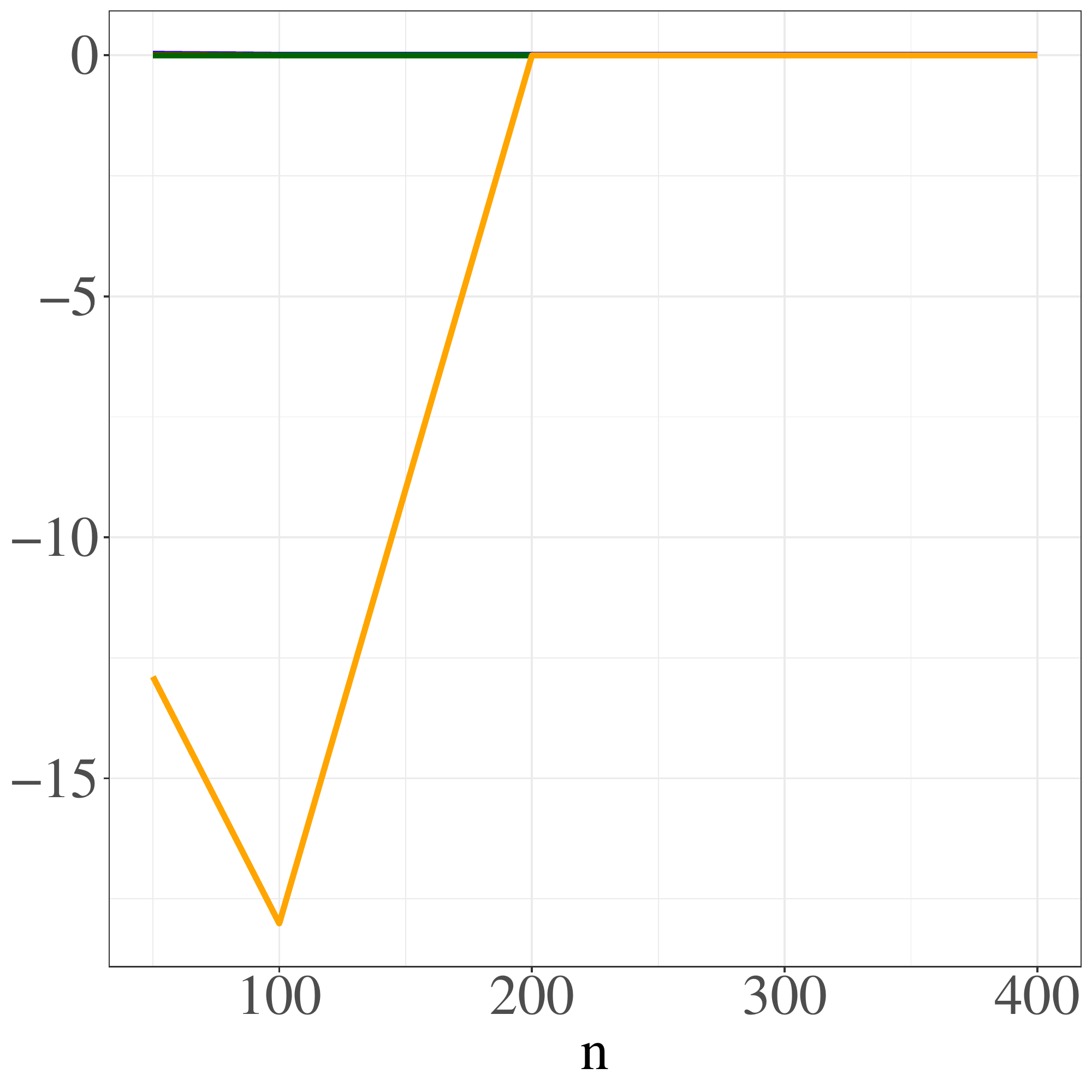}
\end{subfigure}

\vskip\baselineskip
\begin{subfigure}[b]{0.3\textwidth}
     \caption[]
        {{\footnotesize $\beta_{0i} = 1.5, \mu_{0} = - \sqrt{\log n}$}}
 	\centering
	\includegraphics[width=1.1\textwidth, height = \textwidth]{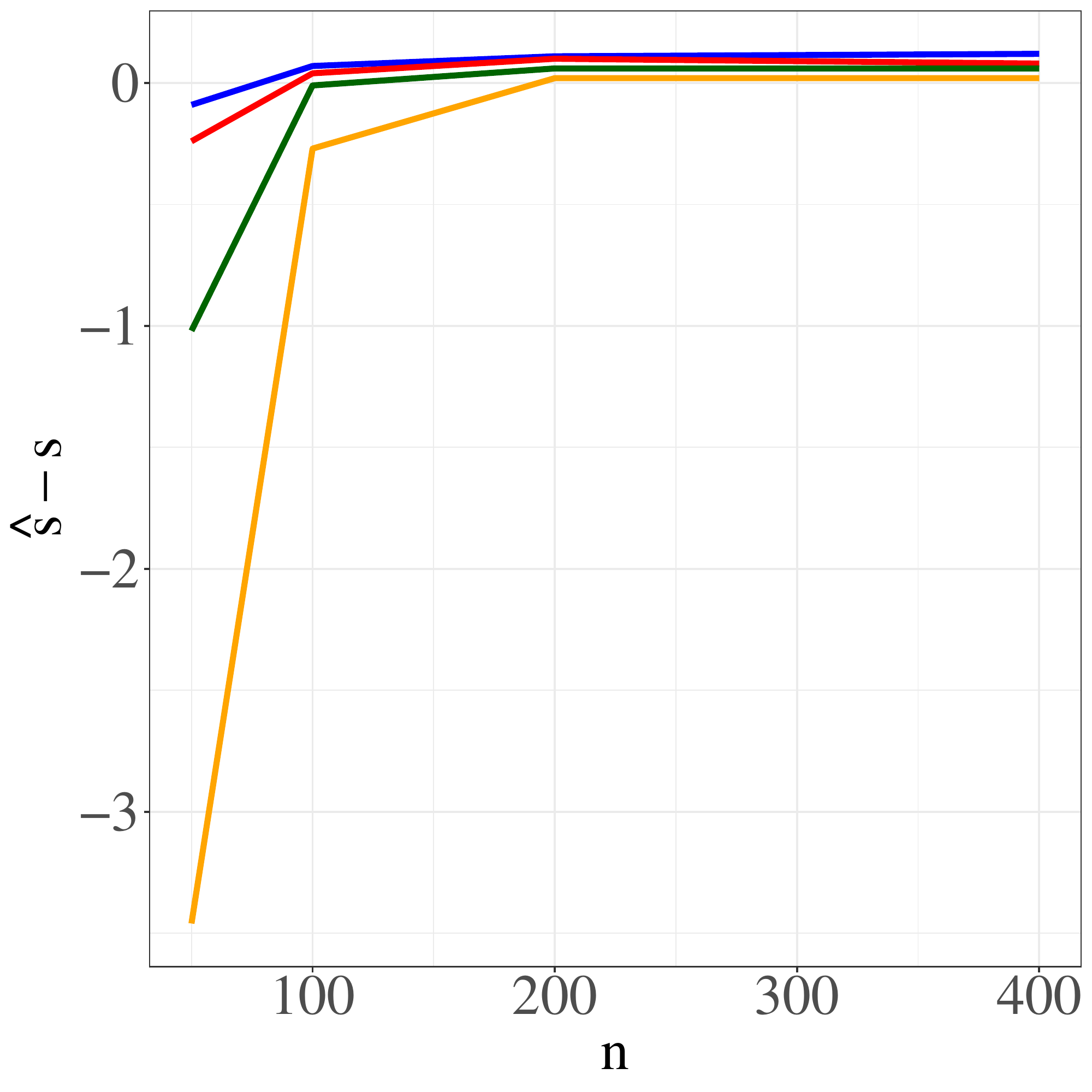}
\end{subfigure}
 \hfill \hfill \hfill \hspace{-5mm}
\begin{subfigure}[b]{0.3\textwidth}
   \caption[]
        {{\footnotesize $\beta_{0i} = \sqrt{\log n}, \mu_{0} = - \sqrt{\log n}$}}
 	\centering
	\includegraphics[width=\textwidth]{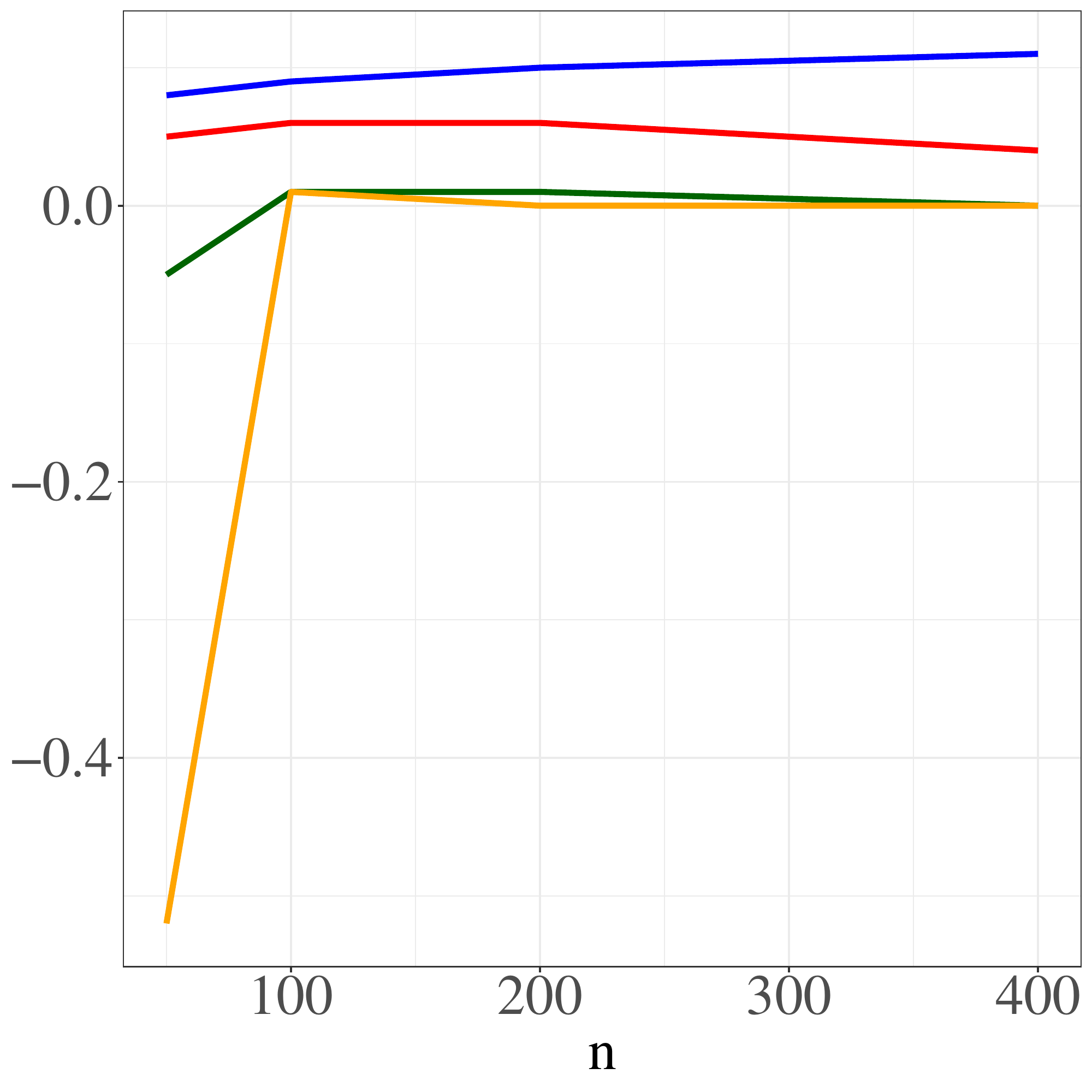}
\end{subfigure}
\hfill 
\begin{subfigure}[b]{0.3\textwidth}
    \caption[]
        {{\footnotesize $\beta_{0i} = \log n, \mu_{0} = - \sqrt{\log n}$}}
 	\centering
	\includegraphics[width=\textwidth]{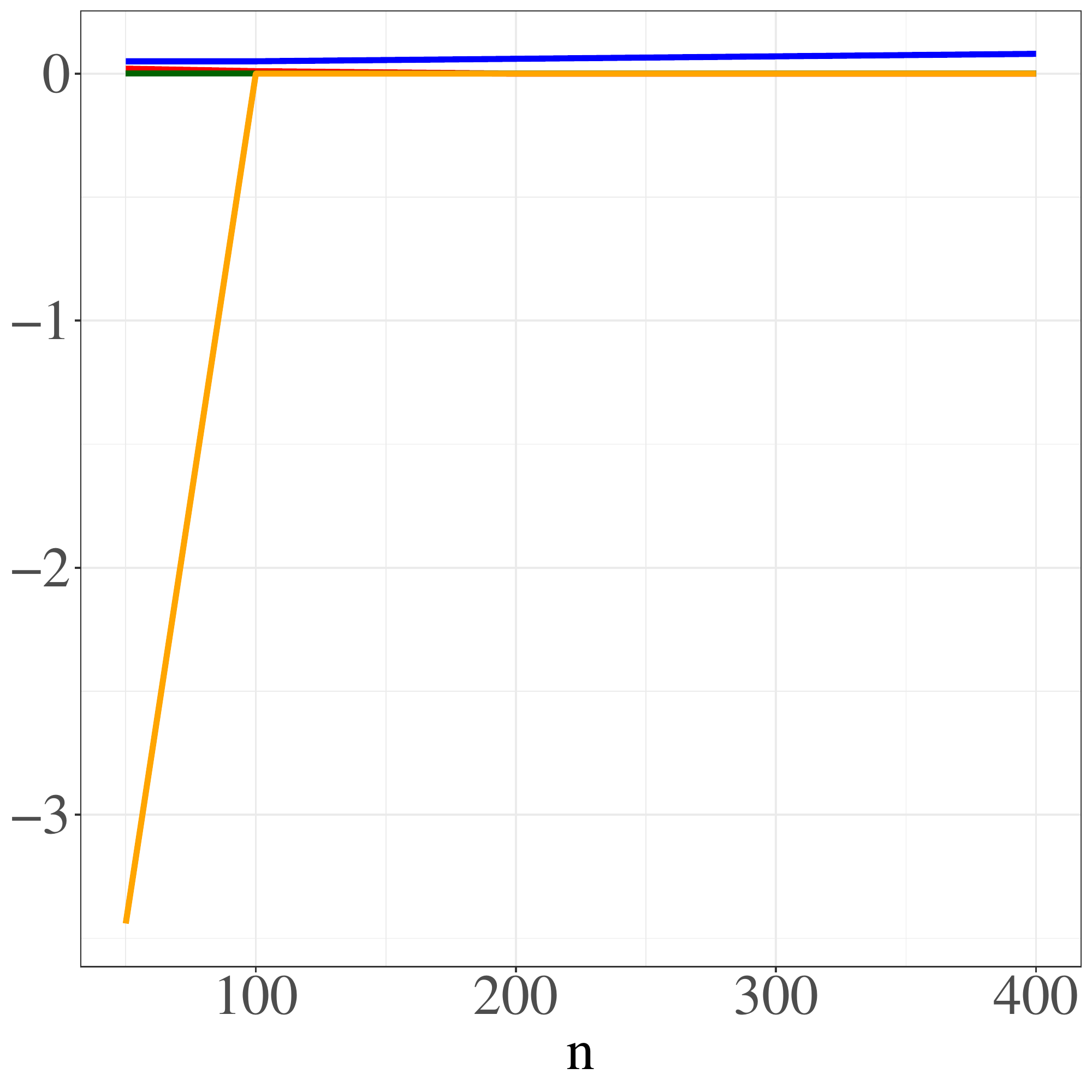}
\end{subfigure}

\vskip\baselineskip
\begin{subfigure}[b]{0.3\textwidth}
     \caption[]
        {{\footnotesize $\beta_{0i} = 1.5, \mu_{0} = - \log n$}}
 	\centering
	\includegraphics[width=1.1\textwidth, height = \textwidth]{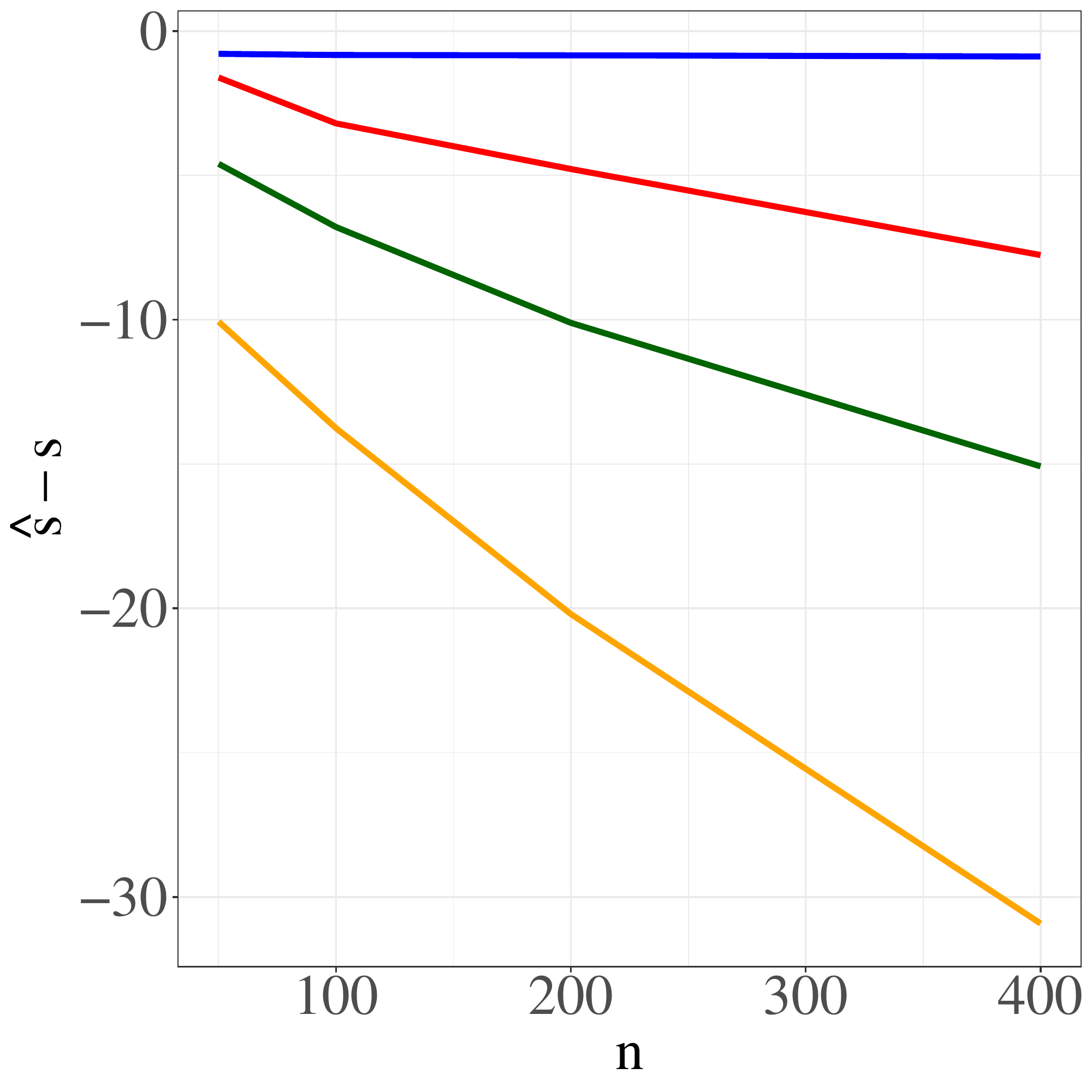}
\end{subfigure}
 \hfill \hfill \hfill \hspace{-5mm}
\begin{subfigure}[b]{0.3\textwidth}
   \caption[]
        {{\footnotesize $\beta_{0i} = \sqrt{\log n}, \mu_{0} = - \log n$}}
 	\centering
	\includegraphics[width=\textwidth]{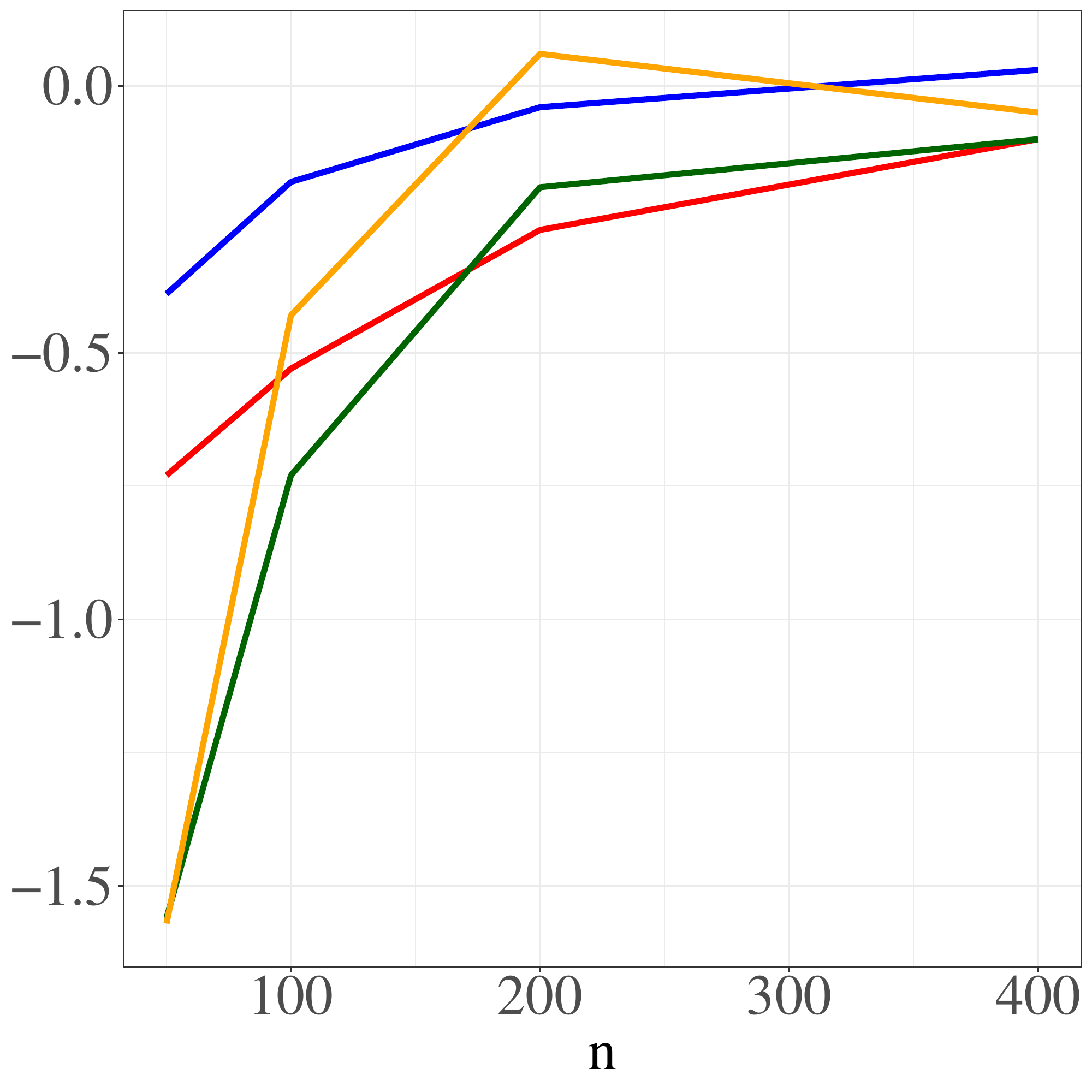}
\end{subfigure}
\hfill 
\begin{subfigure}[b]{0.3\textwidth}
    \caption[]
        {{\footnotesize $\beta_{0i} = \log n, \mu_{0} = - \log n$}}
 	\centering
	\includegraphics[width=\textwidth]{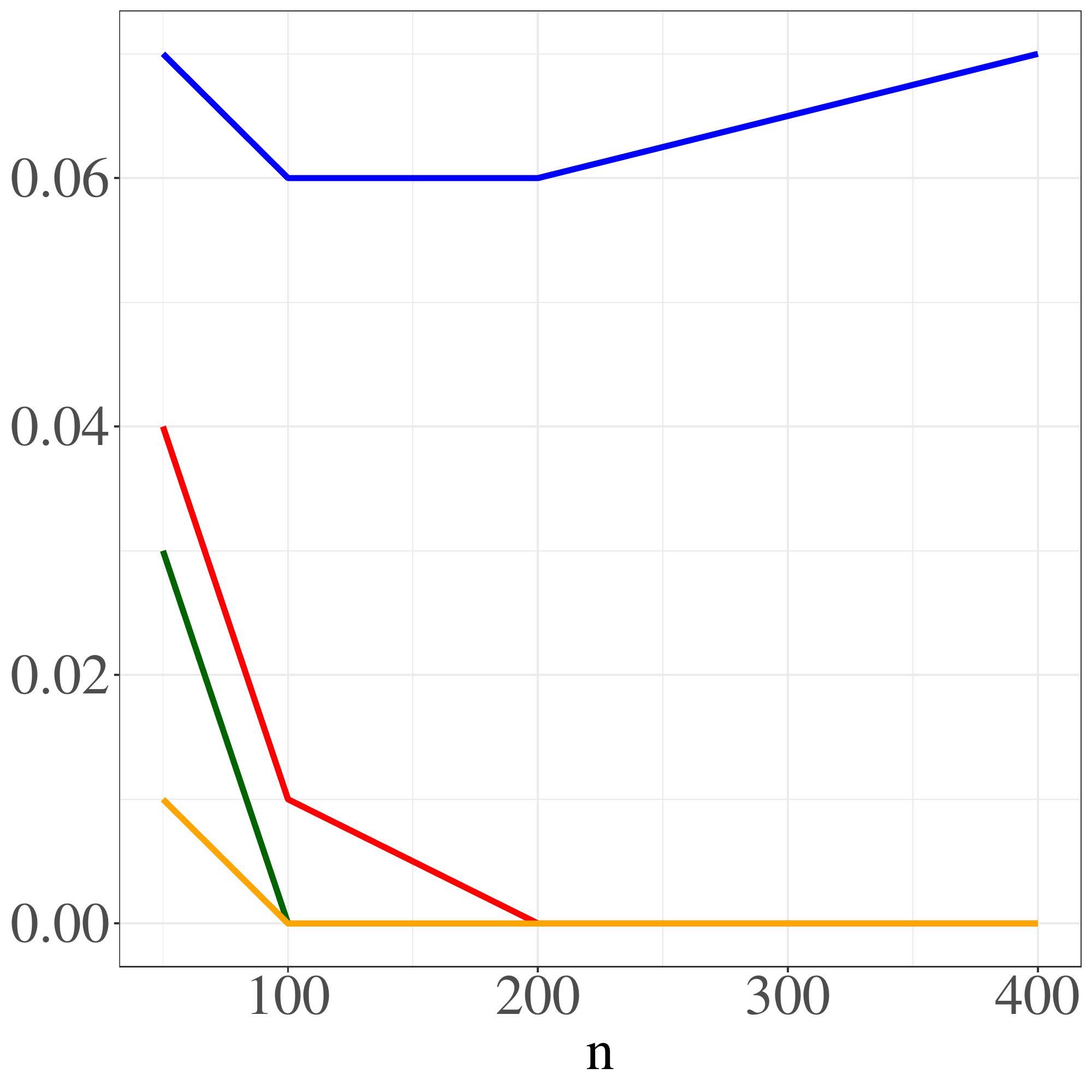}
\end{subfigure}

 \caption[ ]
 {\small Simulation results on $\hat{s} - s_{0}$ with $\hat{s}$ selected by BIC. \  \legendsquare{blue}~$s_0 = 2$,
  \legendsquare{red}~$s_0 = \lfloor \sqrt{n/2} \rfloor$,
  \legendsquare{darkgreen}~$s_0 = \left \lfloor \sqrt{n}\right \rfloor$, \legendsquare{orange}~$s_0 = \left \lfloor 2\sqrt{n}\right \rfloor$.  }
        \label{Figure-Number-of-Regressors}
\end{minipage}
\end{figure}

\begin{figure}[p]
\begin{minipage}{\textwidth}
\centering

\begin{subfigure}[b]{0.3\textwidth}
     \caption[ ]
        {{\footnotesize $\beta_{0i} = 1.5, \mu_{0} = - 1.5$}}
 	\centering
	\includegraphics[width=1.1\textwidth, height = \textwidth]{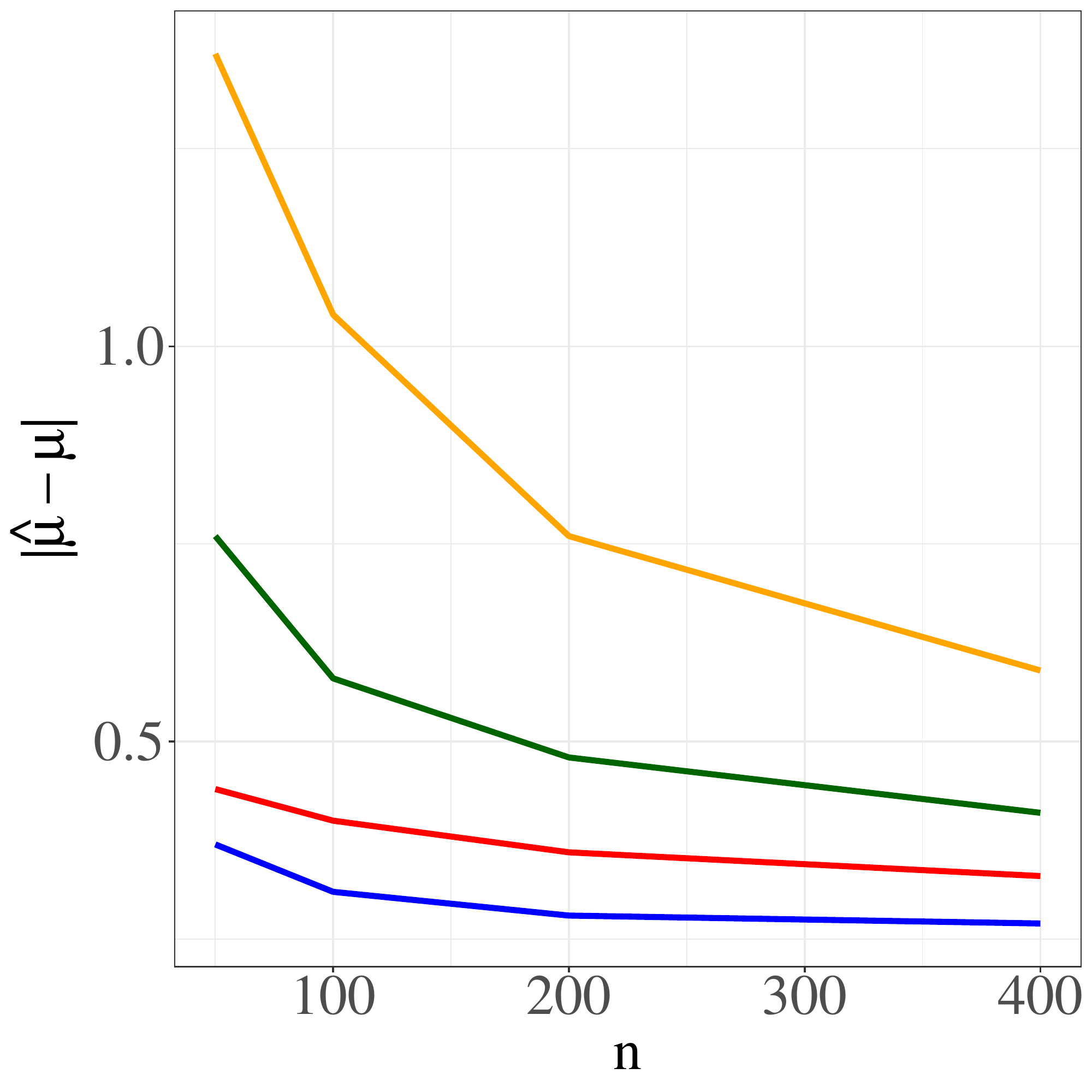}
\end{subfigure}
 \hfill \hfill \hfill \hspace{-5mm}
\begin{subfigure}[b]{0.3\textwidth}
   \caption[]
        {{\footnotesize $\beta_{0i} = \sqrt{\log n}, \mu_{0} = - 1.5$}}
 	\centering
	\includegraphics[width=\textwidth]{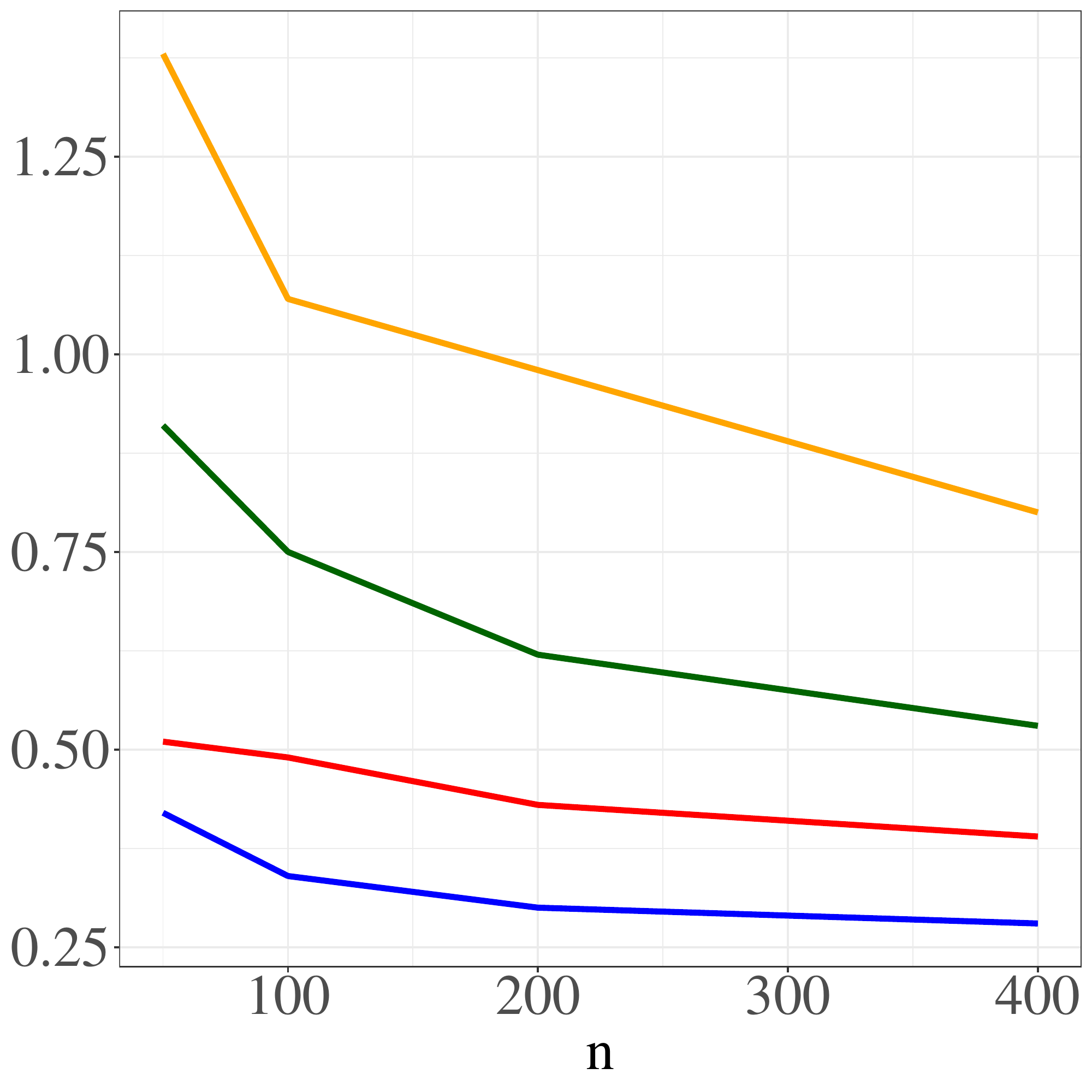}
\end{subfigure}
\hfill   
\begin{subfigure}[b]{0.3\textwidth}
   \caption[]
        {{\footnotesize $\beta_{0i} = \log n, \mu_{0} = - 1.5$}}
 	\centering
	\includegraphics[width=\textwidth]{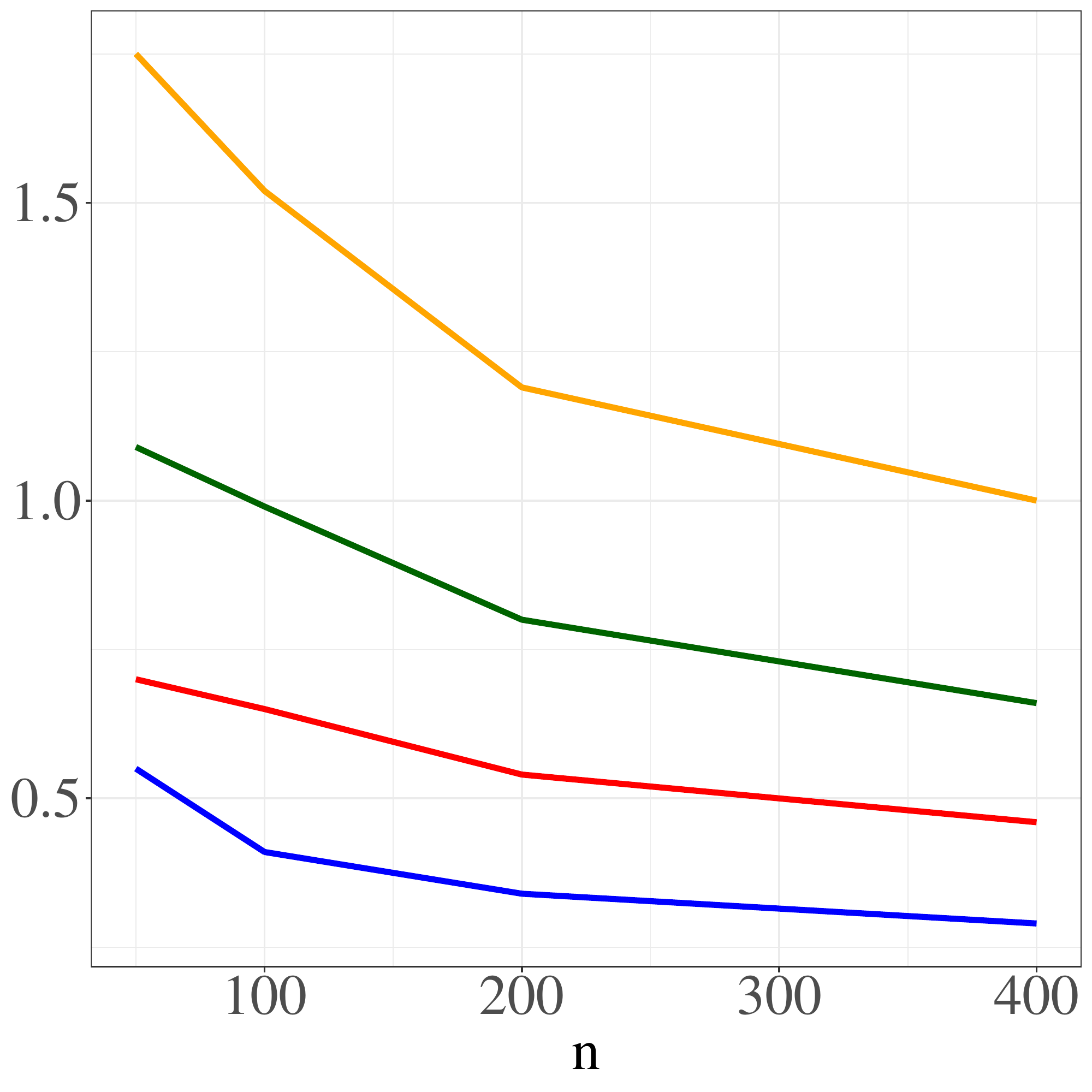}
\end{subfigure}

\vskip\baselineskip
\begin{subfigure}[b]{0.3\textwidth}
     \caption[]
        {{\footnotesize $\beta_{0i} = 1.5, \mu_{0} = - \sqrt{\log n}$}}
 	\centering
	\includegraphics[width=1.1\textwidth, height = \textwidth]{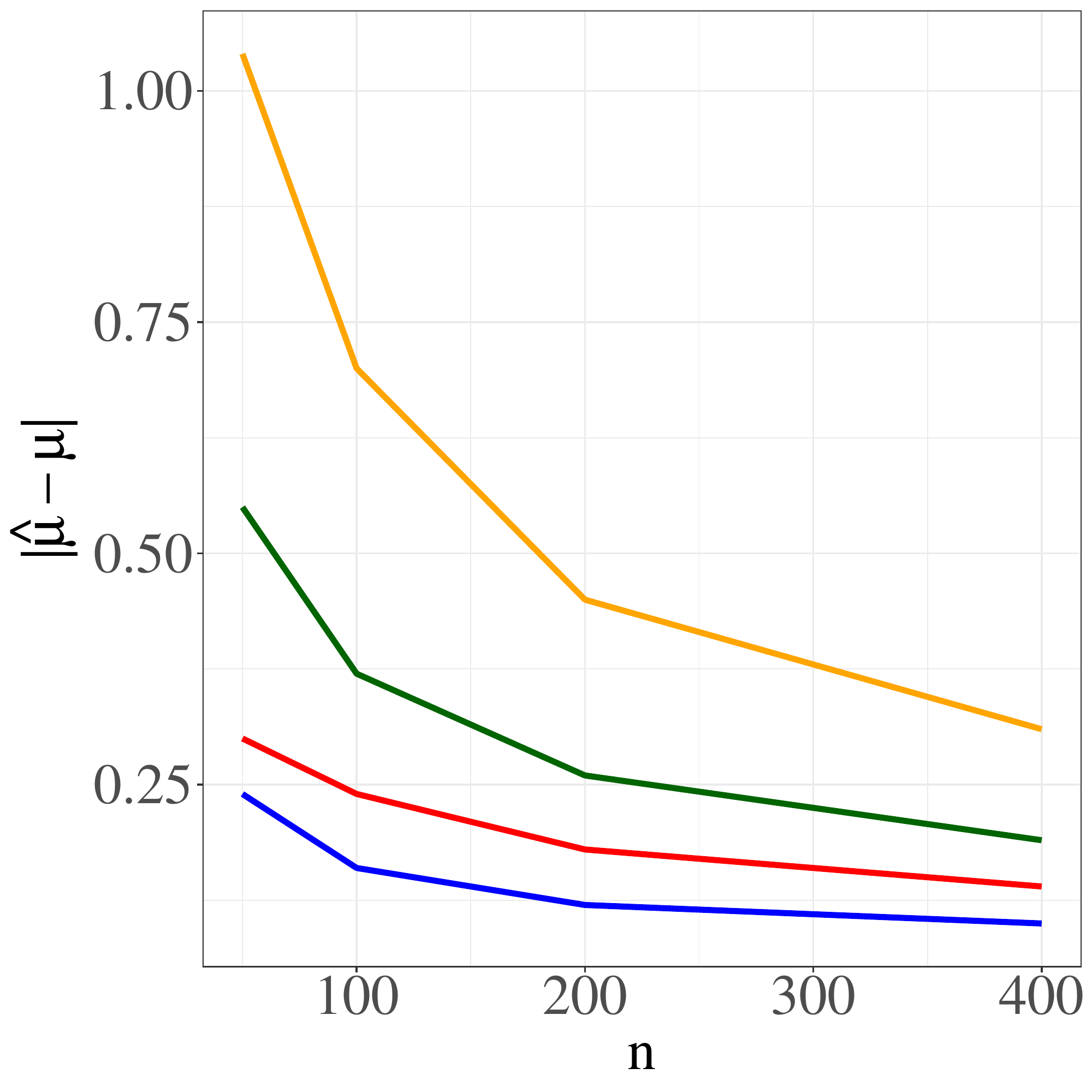}
\end{subfigure}
 \hfill \hfill \hfill  \hspace{-5mm}
\begin{subfigure}[b]{0.3\textwidth}
   \caption[]
        {{\footnotesize $\beta_{0i} = \sqrt{\log n}, \mu_{0} = - \sqrt{\log n}$}}
 	\centering
	\includegraphics[width=\textwidth]{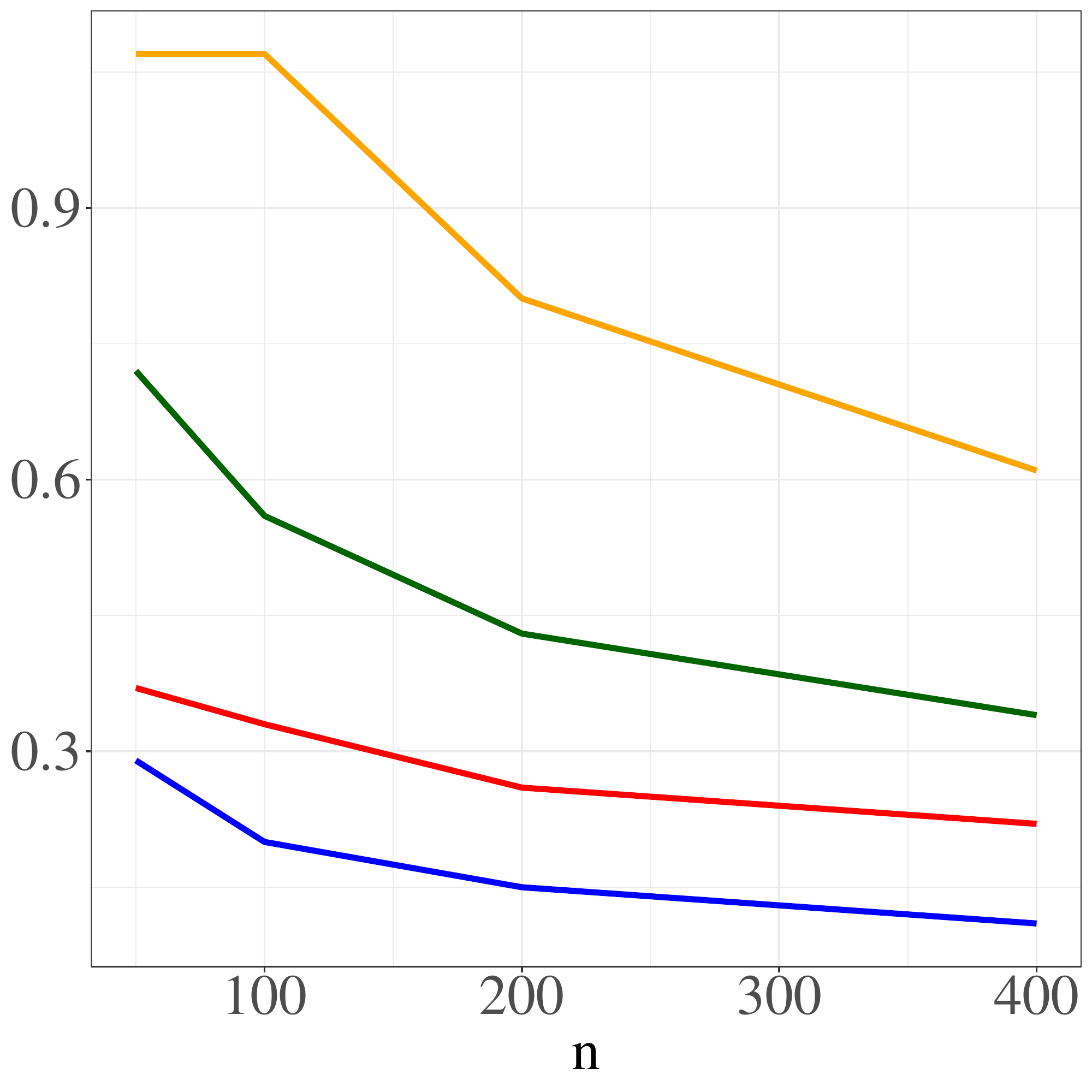}
\end{subfigure}
\hfill  
\begin{subfigure}[b]{0.3\textwidth}
    \caption[]
        {{\footnotesize $\beta_{0i} = \log n, \mu_{0} = - \sqrt{\log n}$}}
 	\centering
	\includegraphics[width=\textwidth]{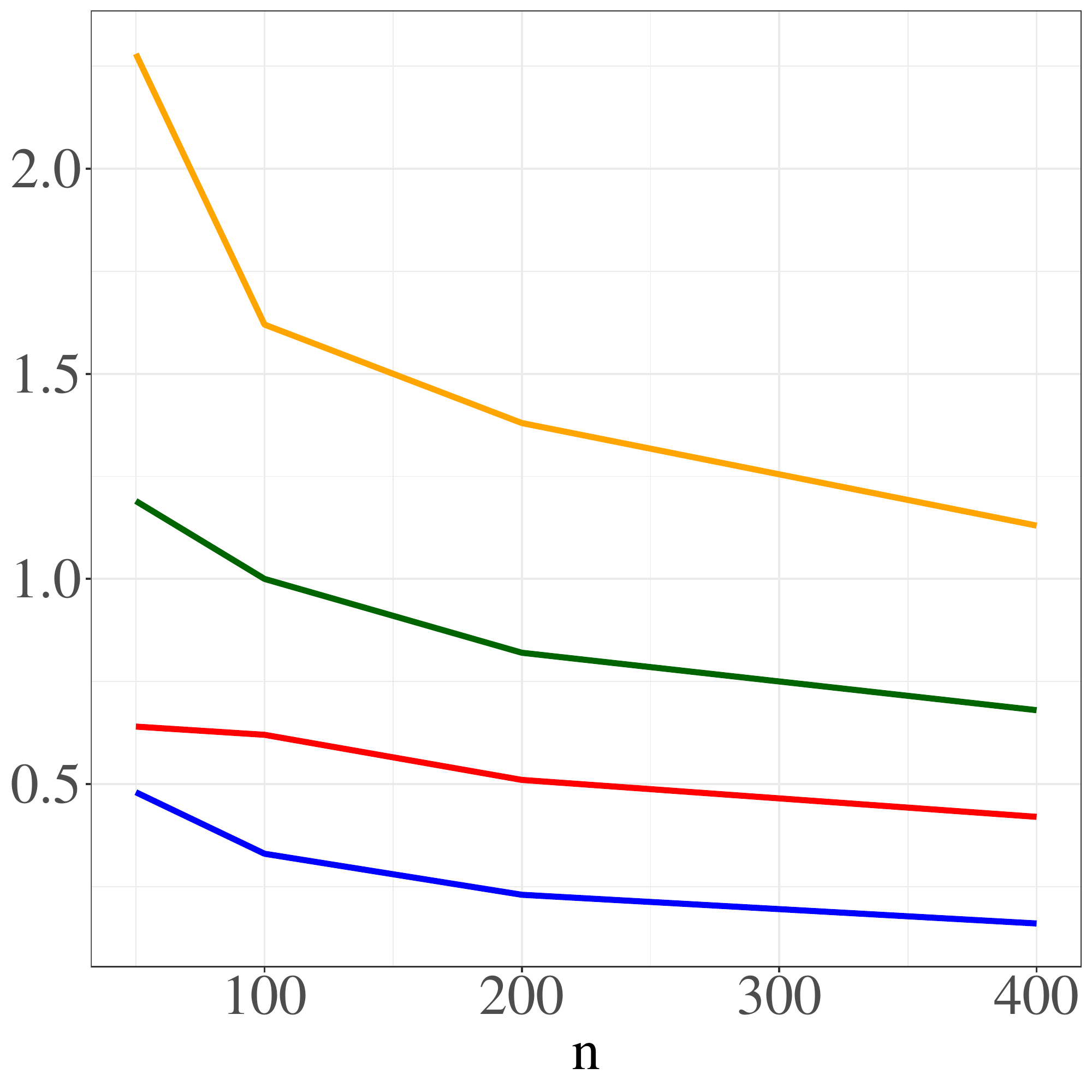}
\end{subfigure}

\vskip\baselineskip
\begin{subfigure}[b]{0.3\textwidth}
     \caption[]
        {{\footnotesize $\beta_{0i} = 1.5, \mu_{0} = - \log n$}}
 	\centering
	\includegraphics[width=1.1\textwidth, height = \textwidth]{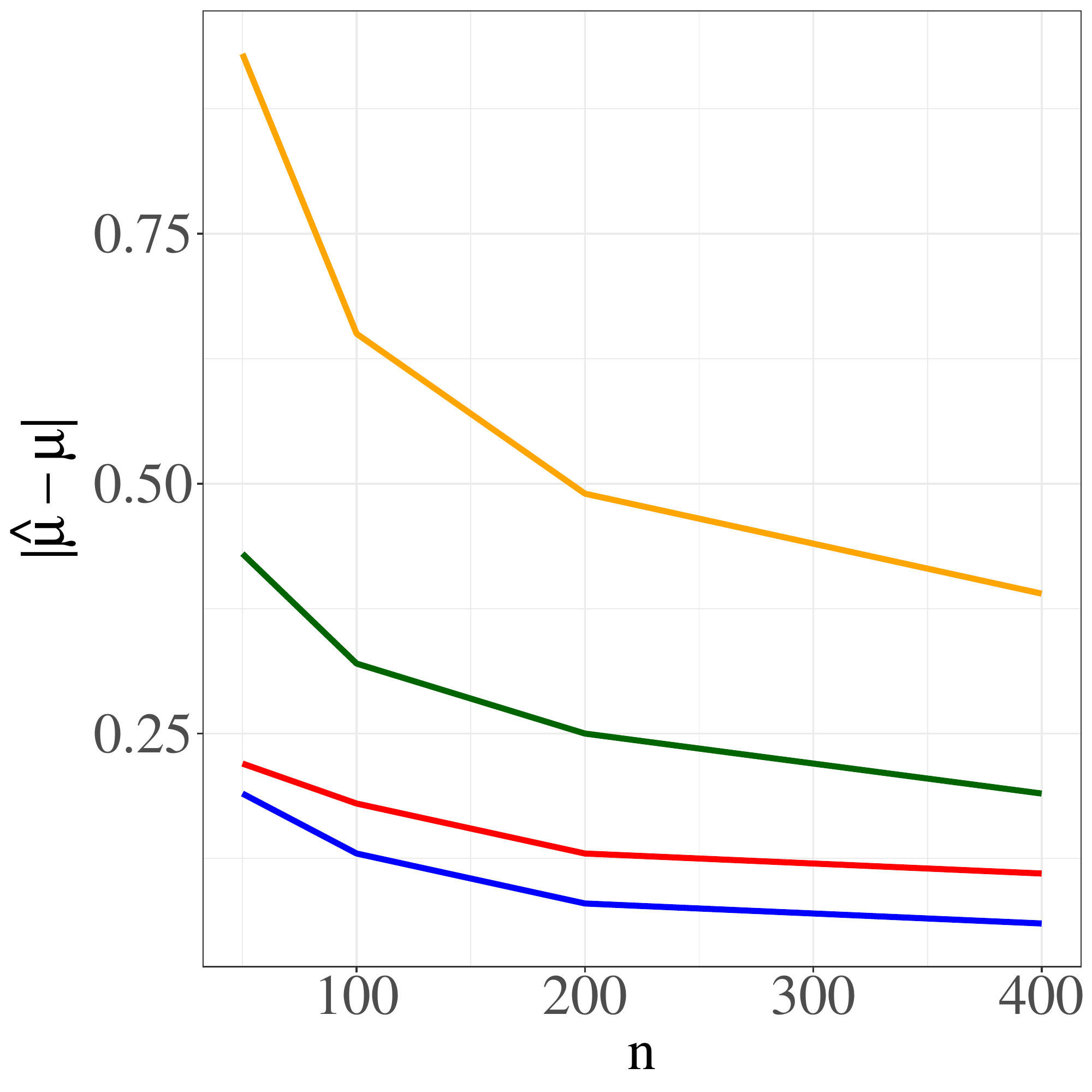}
\end{subfigure}
 \hfill \hfill \hfill  \hspace{-5mm}
\begin{subfigure}[b]{0.3\textwidth}
   \caption[]
        {{\footnotesize $\beta_{0i} = \sqrt{\log n}, \mu_{0} = - \log n$}}
 	\centering
	\includegraphics[width=\textwidth]{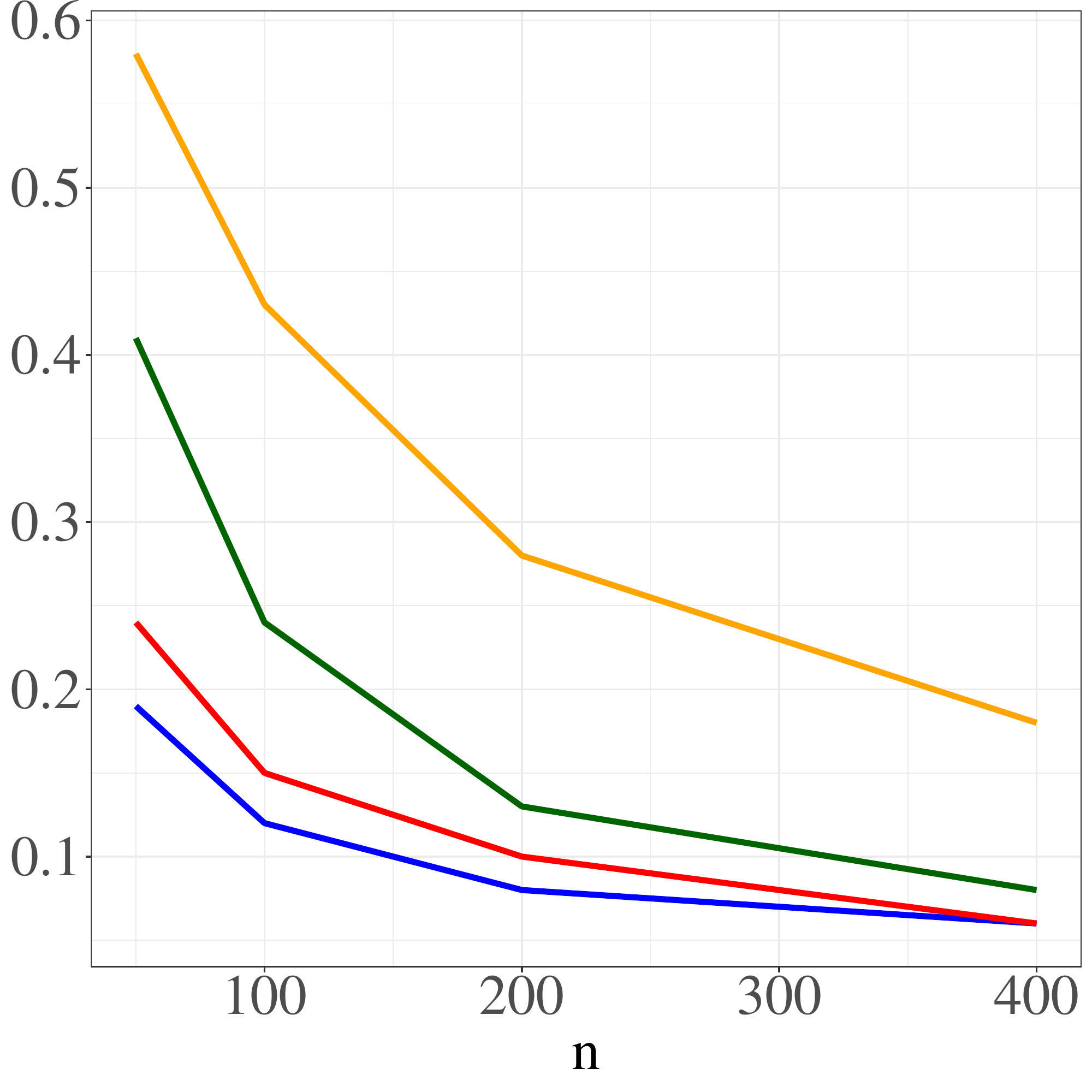}
\end{subfigure}
\hfill 
\begin{subfigure}[b]{0.3\textwidth}
    \caption[]
        {{\footnotesize $\beta_{0i} = \log n, \mu_{0} = - \log n$}}
 	\centering
	\includegraphics[width=\textwidth]{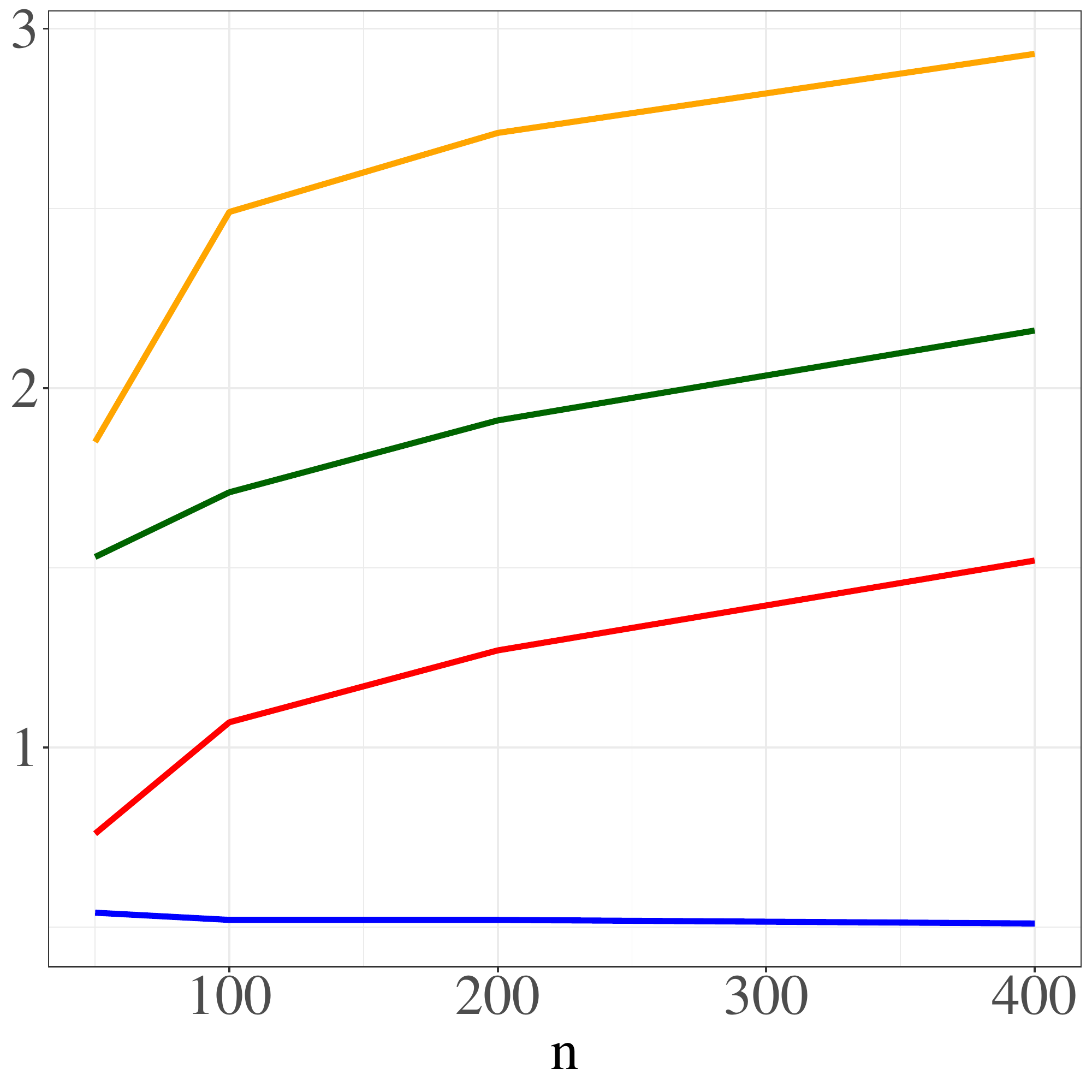}
\end{subfigure}

 \caption[ ]
      {\small Simulation results on $|\hat{\mu}(\hat{s}) - \mu_{0}|$ with $\hat{s}$ selected by BIC. \ \legendsquare{blue}~$s_0 = 2$,
  \legendsquare{red}~$s_0 = \lfloor \sqrt{n/2} \rfloor$,
  \legendsquare{darkgreen}~$s_0 = \left \lfloor \sqrt{n}\right \rfloor$, \legendsquare{orange}~$s_0 = \left \lfloor 2\sqrt{n}\right \rfloor$.  }
         \label{Figure-Norm-of-Mu}
\end{minipage}
\end{figure}

 \newpage
\section{Additional Data Analysis Results}\label{sec:addSimu}

 \begin{table}[hbt!] \centering 
 \begin{threeparttable}
  \caption{Effect of Different Network Statistics on Take-Up, Probit Link} 
  \label{Table: Degree or Beta Probit} 
\begin{tabular}{@{\extracolsep{\fill}}lcccccccc} 
\\[-1.8ex]\hline 
\hline \\[-1.8ex] 
 & \multicolumn{8}{c}{\textit{Dependent variable: take-up}} \\ 
\cline{2-9} 
 
\\[-1.8ex] & (1) & (2) & (3) & (4) & (5) & (6) & (7) & (8)\\ 
\hline \\[-1.8ex] 

 Degree & 0.006$^{***}$ &  &  &  & $-$0.0004 & $-$0.002 &  &  \\ 
  & (0.002) &  &  &  & (0.003) & (0.003) &  &  \\ 
  & & & & & & & & \\ 
 Eigenvector &  & 0.333$^{***}$ &  &  &  &  & 0.255$^{**}$ & 0.143 \\ 
  &  & (0.075) &  &  &  &  & (0.110) & (0.102) \\ 
  & & & & & & & & \\ 
 Beta &  &  & 0.114$^{***}$ &  & 0.119$^{**}$ &  & 0.042 &  \\ 
  &  &  & (0.030) &  & (0.047) &  & (0.043) &  \\ 
  & & & & & & & & \\ 
 Leader &  &  &  & 0.182$^{***}$ &  & 0.208$^{***}$ &  & 0.136$^{***}$ \\ 
  &  &  &  & (0.036) &  & (0.050) &  & (0.049) \\ 
  & & & & & & & & \\ 
  
\hline 
\hline \\[-1.8ex] 
 
\end{tabular} 
\begin{tablenotes}
      \small
       \item Note:  $\ensuremath{^{*}}\ensuremath{p<0.1};\ensuremath{^{**}}\ensuremath{p<0.05};\ensuremath{^{***}}\ensuremath{p<0.01}$

    \end{tablenotes}
  \end{threeparttable}
\end{table}

 \begin{table}[hbt!] \centering 
 \begin{threeparttable}
  \caption{Effect of Different Network Statistics on Take-Up, Identity Link (Linear Regression)} 
  \label{Table: Degree or Beta LM} 
\begin{tabular}{@{\extracolsep{\fill}}lcccccccc} 
\\[-1.8ex]\hline 
\hline \\[-1.8ex] 
 & \multicolumn{8}{c}{\textit{Dependent variable: take-up}} \\ 
\cline{2-9} 
 
\\[-1.8ex] & (1) & (2) & (3) & (4) & (5) & (6) & (7) & (8)\\ 
\hline \\[-1.8ex] 

 Degree & 0.001$^{***}$ &  &  &  & $-$0.0001 & $-$0.001 &  &  \\ 
  & (0.001) &  &  &  & (0.001) & (0.001) &  &  \\ 
  & & & & & & & & \\ 
 Eigenvector &  & 0.090$^{***}$ &  &  &  &  & 0.069$^{**}$ & 0.039 \\ 
  &  & (0.020) &  &  &  &  & (0.029) & (0.027) \\ 
  & & & & & & & & \\ 
 Beta &  &  & 0.031$^{***}$ &  & 0.032$^{**}$ &  & 0.011 &  \\ 
  &  &  & (0.008) &  & (0.013) &  & (0.012) &  \\ 
  & & & & & & & & \\ 
 Leader &  &  &  & 0.049$^{***}$ &  & 0.056$^{***}$ &  & 0.037$^{***}$ \\ 
  &  &  &  & (0.010) &  & (0.013) &  & (0.013) \\ 
  & & & & & & & & \\ 
  
\hline 
\hline \\[-1.8ex] 
 
\end{tabular} 
\begin{tablenotes}
      \small
       \item Note: $\ensuremath{^{*}}\ensuremath{p<0.1};\ensuremath{^{**}}\ensuremath{p<0.05};\ensuremath{^{***}}\ensuremath{p<0.01}$

    \end{tablenotes}
  \end{threeparttable}
\end{table}

\end{document}